\documentclass[11pt, requo, oneside]{amsart}

 \usepackage{graphicx,color,amssymb, amsmath, amsthm, mathrsfs}
\usepackage{geometry}
\usepackage[all]{xy}
\usepackage{slashed}
\usepackage{mathtools}
\usepackage{todonotes}
\usepackage{color}
\usepackage[colorlinks = true,
            linkcolor = blue,
            urlcolor  = blue,
            citecolor = red,
            anchorcolor = blue]{hyperref}

\swapnumbers
\newtheorem{theorem}{Theorem}[section]
\newtheorem{lemma}[theorem]{Lemma}
\newtheorem{proposition}[theorem]{Proposition}
\newtheorem{corollary}[theorem]{Corollary}
\theoremstyle{definition}
\newtheorem{definition}[theorem]{Definition}

\newtheorem{remark}[theorem]{Remark}

\newtheorem{example}[theorem]{Example}
\newtheorem*{remark*}{Remark}
\newtheorem*{definition*}{Definition}

\usepackage{thmtools}
\usepackage{amsmath,amscd}
\declaretheoremstyle[notefont=\bfseries,notebraces={}{},%
    headpunct={},postheadspace=1em]{mystyle}
\declaretheorem[style=mystyle,numbered=no,name=Theorem]{thm-hand}

\newcommand{\bea}          {\begin{eqnarray}}
\newcommand{\eea}          {\end{eqnarray}}
\newcommand{\beastar}          {\begin{eqnarray*}}
\newcommand{\eeastar}          {\end{eqnarray*}}

\newcommand{\supp}{\textnormal{supp }}

\setcounter{tocdepth}{1}
\oddsidemargin -.1in \evensidemargin -.1in 
\textwidth 6.5in
\textheight 8.2in

\begin{document}

\title{Persistence approximation property for $L^{p}$ operator algebras}

\author{Hang Wang}
\address[Hang Wang]{\normalfont{Research Center for Operator Algebras, School of Mathematical Sciences, East China Normal University, Shanghai 200062, China}}
\email{wanghang@math.ecnu.edu.cn}

\author{Yanru Wang}
\address[Yanru Wang]{\normalfont{Research Center for Operator Algebras, School of Mathematical Sciences\\ East China Normal University, Shanghai 200062, China}}
\email{52215500014@stu.ecnu.edu.cn}

\author{Jianguo Zhang}
\address[Jianguo Zhang]{\normalfont{School of Mathematics and Statistics, Shaanxi Normal University,
Xi'an 710119, China}}
\email{jgzhang@snnu.edu.cn}

\author{Dapeng Zhou}
\address[Dapeng Zhou]{\normalfont{School of Statistics and Information, Shanghai University of International Business and Economics, Shanghai 201620, China}}
\email{giantroczhou@126.com}

\begin{abstract} 
In this paper, we study the persistence approximation property for quantitative $K$-theory of filtered $L^p$ operator algebras. Moreover, we define quantitative assembly maps for $L^p$ operator algebras when $p\in [1,\infty)$. Finally, in the case of $L^{p}$ crossed products and $L^{p}$ Roe algebras, we find sufficient conditions for the persistence approximation property. This allows us to give some applications involving the $L^{p}$ (coarse) Baum-Connes conjecture.
\end{abstract}

\subjclass{46L80, 58B34.}
\keywords{$L^{p}$ operator algebra; Quantitative assembly map; Persistence approximation property; $L^{p}$ Baum-Connes conjecture}

\maketitle
\tableofcontents

\section{Introduction}
Quantitative operator $K$-theory was primarily developed first by Guoliang Yu in the article \cite{1998-Yu} on the Novikov conjecture for groups with finite asymptotic dimension, and then by Oyono-Oyono and Yu in \cite{Oyono-2015} to study a general quantitative $K$-theory for filtered $C^{*}$-algebras. Based on their work, Yeong Chyuan Chung later extended the framework of quantitative $K$-theory to the class of algebras of bounded linear operators on subquotients of $L^{p}$ spaces for $p\in [1,\infty)$ (i.e. $SQ_{p}$ algebras) in \cite{Banach-Chung}. Since an $L^{p}$ operator algebra is obviously an $SQ_{p}$ algebra, we can derive a framework of quantitative $K$-theory for $L^{p}$ operator algebras by applying Chung's work to the $L^{p}$ operator algebras. For a filtered $L^{p}$ operator algebra $A$, the $K$-theory of $A$ can be approximated by the quantitative $K$-theory group $K^{\varepsilon,r, N}_{*}(A)$ as $r$ and $N$ tend to infinity, i.e. $\lim\limits_{r,N\rightarrow\infty} K^{\varepsilon, r,N}_{*}(A)=K_{*}(A)$. Compared to the usual $K$-theory of a complex Banach algebra, quantitative $K$-theory is more computable and more flexible by using quasi-idempotents and quasi-invertibles instead of idempotents and invertibles respectively. 

To explore a way of approximating $K$-theory with quantitative $K$-theory, Oyono-Oyono and Yu studied the persistence approximation property for quantitative $K$-theory of filtered $C^{*}$-algebras in \cite{PAP-Oyono}. Subsequently, Qin Wang and Zhen Wang investigated the persistence approximation property for maximal Roe algebras. They proved that if $X$ is a coarsely uniformly contractible discrete metric space with bounded geometry, and it admits a fibred coarse embedding into Hilbert space, then the maximal Roe algebra for $X$ satisfies the persistence approximation property in \cite{WZ-PAP}. Motivated by these successful researches on the persistence approximation property for the quantitative $K$-theory, we will in this paper extend these methods and results for $C^{*}$-algebras to $L^{p}$ operator algebras. 

Recently, the research on $L^{p}$ operator algebras has been revived. In the work of  \cite{Phillips}, Phillips introduced full and reduced $L^{p}$ crossed products and proved that the $K$-theory of $L^{p}$ analogs of Cuntz algebras is the same as that of $C^{*}$-algebras. This work has inspired mathematicians to study $L^{p}$ operator algebras that behave like $C^{*}$-algebras, including group $L^{p}$ operator algebras \cite{Lp algebra-Gardella,Group algebras,$p$-convolution,Phillips} and groupoid $L^{p}$ operator algebras \cite{groupoids}. There is also related work on $\ell^p$ uniform Roe algebras in comparison with classical uniform Roe algebras, such as \cite{Rigidity, uniform, quasi-local}. These researches provide sufficient methods and techniques for dealing with the problem of the $L^p$ operator algebras in this paper.

In order to investigate an $L^{p}$ version of the persistence approximation property, we have to give a definition of the quantitative $L^{p}$ assembly map. In this important article \cite{DC-Chung}, Chung defined the $L^{p}$ assembly map, and showed that a certain $L^{p}$ assembly map is an isomorphism if the action $\Gamma\curvearrowright X$ has finite dynamical complexity.  Moreover, Jianguo Zhang and Dapeng Zhou in \cite{ZZ21} studied $L^{p}$ localization algebras and $L^{p}$ Roe algebras, which are basic ingredients for defining quantitative $L^{p}$ assembly maps.

The main aim of this paper is to define the $L^{p}$ analog of the quantitative assembly map to study the persistence approximation property for the quantitative $K$-theory of filtered $L^{p}$ operator algebras. More precisely, we say that a filtered $L^{p}$ operator algebra $A$ has the {\it persistence approximation property} if for any $\varepsilon$ in $(0,\frac{1}{20})$, any $r>0$ and any $N\geq 1$, there exist $\varepsilon'\in [\varepsilon,\frac{1}{20})$, $r'\geq r$ and $N'\geq N$ such that the following statement $\mathcal{PA_{*}}(A,\varepsilon,\varepsilon',r,r',N,N')$ is satisfied: an element from $K^{\varepsilon,r,N}_{*}(A)$ is zero in $K_{*}(A)$ implies that it is zero in $K^{\varepsilon',r',N'}_{*}(A)$. For the case of a crossed product of an $L^{p}$ operator algebra by a finitely generated group, we obtain the main theorem:

\begin{theorem}\rm(see Theorem \ref{theorem 4.1})\ \
Let $\Gamma$ be a finitely generated group, and let $A$ be a $\Gamma$-$L^{p}$ operator algebra. Assume that
\begin{itemize}
\item $\Gamma$ admits a cocompact universal example for proper actions;
\item for any positive integer $\mathscr{N}$, there exists a non-decreasing function $\omega: [1,\infty)\rightarrow [1,\infty)$ such that the $\mathscr{N}$-$L^{p}$ Baum-Connes assembly map for $\Gamma$ with coefficients in 
$$\ell^{\infty}(\mathbb{N}, \mathscr{K}(\ell^{p})\otimes A)$$
is $\omega$-surjective;
\item the $L^{p}$ Baum-Connes assembly map for $\Gamma$ with coefficients in $A$ is injective.
\end{itemize}
Then for any $N\geq 1$, there exists a universal constant $\lambda_{PA}\geq 1$ such that for any $\varepsilon$ in $(0,\frac{1}{20\lambda_{PA}})$ and any $r>0$, there exist $r'\geq r$ and $N'\geq N$ such that $\mathcal{PA_{*}}(A\rtimes\Gamma, \varepsilon,\lambda_{PA}\varepsilon,r,r',N,N')$ holds.
\end{theorem}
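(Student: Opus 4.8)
The plan is to transplant the controlled $K$-theory argument of Oyono-Oyono and Yu from the $C^{*}$-setting to the $L^{p}$-setting, using the quantitative $L^{p}$ assembly maps constructed earlier in the paper together with the two Baum-Connes hypotheses. The cocompactness hypothesis on the universal example for proper actions is what makes the argument quantitative: it lets us replace the inductive limit $K_{*}^{\mathrm{top}}(\Gamma;A)=\varinjlim_{d}K_{*}(P_{d}(\Gamma);A)$ over the Rips complexes by a single controlled Rips scale, so that assembly becomes one controlled morphism $\mu_{d,A}^{\varepsilon,r,N}\colon K_{*}(P_{d}(\Gamma);A)\to K_{*}^{\varepsilon,\lambda r,N}(A\rtimes\Gamma)$ with a fixed rescaling constant $\lambda$, whose composition with the structure map $\iota\colon K_{*}^{\varepsilon,\lambda r,N}\to K_{*}$ recovers the ordinary $L^{p}$ Baum-Connes assembly map. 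The statement $\mathcal{PA}_{*}$ then becomes a controlled diagram chase around this single map.

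The core of the argument proceeds in two steps. Let $x\in K_{*}^{\varepsilon,r,N}(A\rtimes\Gamma)$ with $\iota(x)=0$ in $K_{*}(A\rtimes\Gamma)$. \emph{First}, I would lift $x$ through assembly: invoking $\omega$-surjectivity of the $\mathscr{N}$-$L^{p}$ Baum-Connes assembly map with coefficients in $\ell^{\infty}(\mathbb{N},\mathscr{K}(\ell^{p})\otimes A)$, I produce a class $y\in K_{*}(P_{d}(\Gamma);A)$, at a scale $d$ governed by $\omega$, whose image $\mu_{d,A}^{\lambda\varepsilon,r',N'}(y)$ equals the image of $x$ in $K_{*}^{\lambda\varepsilon,r',N'}(A\rtimes\Gamma)$. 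Passing to $\ell^{\infty}(\mathbb{N},-)$ coefficients is precisely the device that converts the quantifier-laden surjectivity into a uniform control function $\omega$ independent of $(\varepsilon,r)$, while the amplification by $\mathscr{K}(\ell^{p})$ absorbs the matrix and stabilization steps built into quantitative $K$-theory.

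\emph{Second}, I would use injectivity to kill $y$. Since $\iota\circ\mu_{d,A}^{\bullet}$ is a factor of the ordinary assembly map, the image $\bar y$ of $y$ in $K_{*}^{\mathrm{top}}(\Gamma;A)$ satisfies $\mu_{A}(\bar y)=\iota(x)=0$; as the assembly map with coefficients $A$ is injective, $\bar y=0$ in the limit, so $y$ becomes zero in $K_{*}(P_{D}(\Gamma);A)$ at some larger scale $D\geq d$ (bounded using cocompactness). Applying assembly at scale $D$ and the functoriality of $\mu$ under enlarging the scale, the vanishing of $y$ forces the image of $x$ in $K_{*}^{\lambda_{PA}\varepsilon,r'',N''}(A\rtimes\Gamma)$ to be zero. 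Collecting the finitely many rescaling constants picked up along the lift, the scale change, and the vanishing yields the universal constant $\lambda_{PA}$, and the output parameters $r'\geq r$, $N'\geq N$ are read off from $\omega$ and the cocompact scale $D$; this is exactly $\mathcal{PA}_{*}(A\rtimes\Gamma,\varepsilon,\lambda_{PA}\varepsilon,r,r',N,N')$.

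The main obstacle, and the genuinely new feature relative to the $C^{*}$-case, is the bookkeeping of the norm-control parameter $N$. In the $C^{*}$-setting idempotents and invertibles admit polar decompositions and norms are automatically controlled, whereas over an $L^{p}$ operator algebra quantitative classes are represented only by quasi-idempotents and quasi-invertibles carrying an explicit bound $N$, so every controlled morphism in the chase — the lift, the scale change, and the assembly map itself — must be shown to degrade $N$ in a uniform, composable way. Ensuring that $N'$ depends only on $N$ and the fixed geometric data (not on $\varepsilon$ or $r$), and that the quantitative $L^{p}$ assembly map is genuinely compatible both with stabilization by $\mathscr{K}(\ell^{p})$ and with enlarging the Rips scale, is where the real work lies; the uniformity this requires is exactly what the $\ell^{\infty}(\mathbb{N},-)$ coefficients and the cocompactness hypothesis are designed to furnish.
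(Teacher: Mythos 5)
Your outline assembles the right ingredients (quantitative assembly, $\omega$-surjectivity with $\ell^{\infty}(\mathbb{N},\mathscr{K}(\ell^{p})\otimes A)$ coefficients, injectivity with coefficients in $A$, cocompactness to fix the Rips scales $d\leq d'$), but the argument as structured has a genuine gap: it is run for a \emph{single} class $x$, and the parameters it outputs are not uniform in $x$. Concretely, two steps of the chase produce element-dependent constants: (1) when you compare $x$ with the image of its lift under the quantitative assembly map, the propagation $r'$ comes from approximating a homotopy of idempotents (resp.\ invertibles) by finite-propagation elements, as in Proposition \ref{prop 3.24}, and this homotopy depends on $x$; (2) when injectivity kills the lift at scale $d'$, the norm bound you must record is the norm $\Vert P\Vert$ of the homotopy $P$ witnessing that vanishing (the paper's $N'=\max\{33N_{1},\Vert P\Vert\}$), which again depends on the class. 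So your conclusion is: for each $x$ with $\iota_{*}(x)=0$ there exist $r'(x),N'(x)$ killing it --- but that is essentially Proposition \ref{prop 2.12}(ii), not the theorem. The statement $\mathcal{PA}_{*}(A\rtimes\Gamma,\varepsilon,\lambda_{PA}\varepsilon,r,r',N,N')$ demands one pair $(r',N')$ working for \emph{all} of $K^{\varepsilon,r,N}_{*}(A\rtimes\Gamma)$, and nothing in your two steps, nor in ``reading off $r',N'$ from $\omega$ and the cocompact scale $D$,'' forces that uniformity.

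Relatedly, you have misidentified the role of the $\ell^{\infty}(\mathbb{N},-)$ coefficients: they are not ``the device that converts the quantifier-laden surjectivity into a uniform control function $\omega$'' (the function $\omega$ is simply a hypothesis). Their purpose, following Oyono-Oyono and Yu, is to bundle a whole \emph{sequence} of classes into one. The paper argues by contradiction: if uniformity fails, pick counterexamples $[x_{i}]\in K^{\varepsilon,r,N}_{*}(A\rtimes\Gamma)$ surviving to parameters $r_{i},N_{i}\to\infty$; use Lemma \ref{lemma 2} and the identification $\ell^{\infty}(\mathbb{N},\mathscr{K}(\ell^{p})\otimes A)\rtimes\Gamma_{s}=\ell^{\infty}(\mathbb{N},\mathscr{K}(\ell^{p})\otimes A\rtimes\Gamma_{s})$ to realize the whole sequence as a single class $[x]$ in $K^{\lambda_{N}\varepsilon,h_{\varepsilon,N}r,\lambda_{N}}_{*}\big(\ell^{\infty}(\mathbb{N},\mathscr{K}(\ell^{p})\otimes A)\rtimes\Gamma\big)$; run your two steps \emph{once} on $[x]$ (here Proposition \ref{prop 3} is needed to split the lift $[z]$ into components $[z_{i}]$ so that injectivity with coefficients in $A$ applies factorwise and then reassembles); this produces a single pair $(R,N')$ killing $[x]$, hence, by projecting to each factor, killing every $[x_{i}]$ at the same $(R,N')$ --- contradicting $r_{i},N_{i}\to\infty$. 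Without this bundling-and-contradiction mechanism (or some substitute argument for uniformity), your proof does not establish the theorem.
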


This theorem is a generalization of Oyono-Oyono and Yu's work on persistence approximation property for $C^{*}$ crossed products \cite{PAP-Oyono}. We call it the $L^{p}$ version of the persistence approximation property. To demonstrate this result, we define a quantitative $L^{p}$ assembly map by using the $L^{p}$ localization algebra and the $L^{p}$ Roe algebra. Moreover, we carefully estimate the changing parameters of $(\varepsilon,r,N)$-idempotent and $(\varepsilon,r,N)$-invertible elements in the proof of the theorem to present a cleaner result.

Parallel to the main theorem, we obtain a similar result for the $L^{p}$ Roe algebra for a discrete metric space $X$ with bounded geometry. Replacing the assumption that the group admits a cocompact universal example for proper actions by that $X$ is coarsely uniformly contractible, we have the following theorem: 

\begin{theorem}\rm (see Theorem \ref{th 5.6})\ \
Let $X$ be a discrete metric space with bounded geometry, and let $A$ be an $L^{p}$ operator algebra. Assume that
\begin{itemize}
\item $X$ is coarsely uniformly contractible;
\item for any positive integer $\mathscr{N}$, there exists a non-decreasing function $\omega: [1,\infty)\rightarrow [1,\infty)$ such that the $\mathscr{N}$-$L^{p}$ coarse Baum-Connes assembly map for $X$ with coefficients in  
$$\ell^{\infty}(\mathbb{N},\mathscr{K}(\ell^{p})\otimes A)$$
is $\omega$-surjective;
\item the $L^{p}$ coarse Baum-Connes assembly map for $X$ with coefficients in $A$ is injective.
\end{itemize}
Then for any $N\geq 1$, there exists a universal constant $\lambda_{PA}\geq 1$ such that for any $\varepsilon$ in $(0,\frac{1}{20\lambda_{PA}})$ and any $r>0$, there exist $r'\geq r$ and $N'\geq N$ such that $\mathcal{PA_{*}}(B^{p}(X,A), \varepsilon,\lambda_{PA}\varepsilon,r,r',N,N')$ holds.
\end{theorem}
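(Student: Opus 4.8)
The plan is to prove Theorem \ref{th 5.6} by transporting the entire machinery developed for $L^p$ crossed products in Theorem \ref{theorem 4.1} to the coarse geometric setting, exploiting the well-known parallelism between group crossed products and Roe algebras. The two statements are structurally identical: one replaces the cocompact universal example for proper actions by coarse uniform contractibility of $X$, the group assembly map by the coarse Baum-Connes assembly map, and the crossed product $A\rtimes\Gamma$ by the $L^p$ Roe algebra $B^p(X,A)$. So the core strategy is to follow the proof of Theorem \ref{theorem 4.1} step by step, substituting the coarse analog of each ingredient.

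First I would set up the quantitative $L^p$ coarse assembly map using the $L^p$ localization algebra and $L^p$ Roe algebra of $X$ (as constructed in \cite{ZZ21}), in parallel with the quantitative $L^p$ assembly map built for the crossed product case. The key point is that coarse uniform contractibility of $X$ plays exactly the role that the cocompact universal example for proper actions plays for $\Gamma$: it guarantees that the relevant localization algebra computes the coarse $K$-homology and, more importantly, that the evaluation-at-infinity maps behave well at the quantitative level. I would then establish a controlled (quantitative) commutative diagram relating $K^{\varepsilon,r,N}_*(B^p(X,A))$, the quantitative $K$-homology, and the quantitative assembly map, with explicit control-constant bookkeeping so that the distortion of the parameters $(\varepsilon,r,N)$ is governed by a universal $\lambda_{PA}$.

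The heart of the argument is the same two-step mechanism as in the crossed-product case. The $\omega$-surjectivity of the $\mathscr{N}$-coarse assembly map with coefficients in $\ell^\infty(\mathbb{N},\mathscr{K}(\ell^p)\otimes A)$ controls the \emph{lifting} side: it lets one lift a quantitative $K$-theory class that dies in the limit to a class in the quantitative $K$-homology while controlling the growth of $r$ and $N$ through the function $\omega$. The injectivity of the (non-quantitative) coarse assembly map with coefficients in $A$ controls the \emph{vanishing} side: once a class is lifted and shown to map to zero, injectivity forces it to be zero, and a standard controlled-exactness/persistence argument then pushes that vanishing back down to a genuine quantitative relation $\mathcal{PA_*}(B^p(X,A),\varepsilon,\lambda_{PA}\varepsilon,r,r',N,N')$. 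Chung's quantitative $K$-theory framework for $SQ_p$ algebras \cite{Banach-Chung} supplies the controlled Mayer-Vietoris and exact-sequence tools needed to splice these together.

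The hard part will be verifying that coarse uniform contractibility is genuinely a sufficient replacement for the cocompact universal example hypothesis at the quantitative level — that is, that it supplies a propagation-controlled homotopy inverse making the quantitative coarse assembly map an isomorphism on the nose in the relevant range of parameters, rather than merely the usual coarse Baum-Connes statement in the limit. One must produce these homotopies with uniform control on propagation and on the $\varepsilon$-parameter, which is exactly where the bounded geometry of $X$ and the $L^p$-specific norm estimates (the $SQ_p$ functional calculus bounds from \cite{Banach-Chung}) enter and must be tracked carefully. Once this quantitative coarse isomorphism is in place with a universal constant, the remainder of the proof is a parameter-chasing argument formally identical to Theorem \ref{theorem 4.1}, and I expect it to go through with the same universal $\lambda_{PA}$.
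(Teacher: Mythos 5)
Your overall skeleton---transport the proof of Theorem \ref{theorem 4.1} to the coarse setting, with $\omega$-surjectivity doing the lifting and injectivity doing the vanishing---is indeed the paper's strategy. But your plan hinges on a step that is both unnecessary and unavailable: you propose to show that coarse uniform contractibility ``supplies a propagation-controlled homotopy inverse making the quantitative coarse assembly map an isomorphism on the nose,'' and you defer the rest of the argument until ``this quantitative coarse isomorphism is in place.'' No such quantitative isomorphism follows from the hypotheses, which only give $\omega$-surjectivity of the $K^{N}$-variant map with $\ell^{\infty}$-coefficients and injectivity of the limit map with coefficients in $A$; the paper never proves or uses one. Coarse uniform contractibility enters at exactly one point and in a much weaker way: combined with injectivity of $\mu_{A,*}$, it yields a $d'\geq d$ \emph{depending only on $d$, not on the class}, such that $\mu^{d}_{A,*}([z])=0$ implies $i_{d,d',*}([z])=0$. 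The whole point of this uniformity is that it can be applied simultaneously to every component $[z_{i}]$ of a class with coefficients in the infinite product $\ell^{\infty}(\mathbb{N},\mathscr{K}(\ell^{p})\otimes A)$. If you pursue your stated ``hard part'' you will stall on an unprovable intermediate goal, while the statement actually needed is far weaker.

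The second gap is infrastructural. The engine of the proof is a compactness-by-contradiction argument: a hypothetical sequence of counterexamples $([x_{i}])_{i}$ with unbounded parameters $(r_{i},N_{i})$ is bundled into a single class $[x]$ with $\ell^{\infty}$-coefficients, lifted via $\omega$-surjectivity to a class $[z]$, and then unbundled again into components $[z_{i}]$ so that the injectivity hypothesis (which concerns coefficients in $A$, not in the product) can be applied coefficient-wise. Carrying this out requires controlled product decompositions specific to the coarse setting: the filtered isomorphism $B^{p}\big(P_{d}(X),\prod_{i}A_{i}\big)\cong\mathcal{A}^{\infty}_{d}$ of Lemma \ref{lemma 5.2}, the resulting $(\lambda,h)$-isomorphism $\mathcal{K}_{*}\big(B^{p}(X,\prod_{i}A_{i})\big)\rightarrow\prod_{i}\mathcal{K}_{*}\big(B^{p}(X,A_{i})\big)$ of Corollary \ref{cor 5.3}, and the localization-algebra analog (Proposition \ref{prop 3}). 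These are the coarse counterparts of Lemma \ref{lemma 2}, they are the genuinely new technical content of Section 5, and the universal constant itself, $\lambda_{PA}=\rho(9\lambda_{N}\omega(4\lambda_{N}))$, is assembled from their control pair together with Proposition \ref{prop 3.24}. Your appeal to ``controlled Mayer-Vietoris and exact-sequence tools'' from \cite{Banach-Chung} does not supply these decompositions, and without them neither the bundling of counterexamples nor the unbundling of $[z]$---hence none of the parameter chasing---can even begin.
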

As a corollary of this theorem, we proved that any $L^{p}$ Roe algebra for a discrete Gromov hyperbolic metric space satisfies the persistence approximation property.

The outline of this paper is as follows: In section 2, we recall the main results of quantitative $K$-theory for filtered $L^{p}$ operator algebras. In section 3, we define a quantitative $L^{p}$ assembly map and show the connection between the quantitative statements and the $L^{p}$ Baum-Connes conjecture. In section 4, for the case of $L^{p}$ crossed products, we find a sufficient condition for the persistence approximation property. Finally, in section 5, we show that if $X$ is a coarsely uniformly contractible discrete metric space with bounded geometry and finite asymptotic dimension, then the $L^{p}$ Roe algebra for $X$ has the persistence approximation property. 

\section{Quantitative $K$-theory for $L^{p}$ operator algebras}
The ordinary $K$-theory of Banach algebras developed in \cite{Blackadar} focuses on idempotents or invertibles. In comparison, quantitative $K$-theory for Banach algebras studied in \cite{Banach-Chung}  focuses on quasi-idempotents or quasi-invertibles. In this section, we recall some basic definitions and theorems of quantitative $K$-theory for filtered $SQ_{p}$ algebras from \cite{Banach-Chung}. Moreover, by applying these conclusions to filtered $L^p$ operator algebras, we can obtain some basic concepts and main results of quantitative $K$-theory for filtered $L^p$ operator algebras.
\begin{definition}\cite{Lp algebra-Gardella}
Let $A$ be a Banach algebra. For $p\in[1,\infty)$, we say that $A$ is an $L^p$ operator algebra if there exist an $L^p$ space $E$ and an isometric homomorphism $A\rightarrow\mathcal{B}(E)$.
\end{definition}

\begin{remark}
The $L^p$ operator algebra was initially defined by Phillips in \cite{Phillips}, and the
above definition is compatible with the original one.
\end{remark}

\begin{definition}\cite{Banach-Chung}
A filtered $L^{p}$ operator algebra is an $L^{p}$ operator algebra $A$ with a family $(A_r)_{r>0}$ of closed linear subspaces indexed by positive real numbers $r\in (0,\infty)$ such that 
\begin{itemize}
\item $A_r\subset A_{r'}$ if $r\leq r'$;

\item $A_{r}A_{r'}\subset A_{r+r'}$ for all $r,r'>0$;

\item the subalgebra $\bigcup\limits_{r>0}A_{r}$ is dense in $A$.
\end{itemize}
\end{definition}
If $A$ is unital with identity $1_{A}$, we require $1_{A}\in A_{r}$. For any $r>0$, we call the family $(A_{r})_{r>0}$ a filtration of $A$. We say that $a$ has propagation $r$ if $a\in A_{r}$.

If $A$ is not unital, we write the unitization of $A$ as 
$$A^{+}=\{(a,z): a\in A, z\in\mathbb{C}\}$$
with multiplication given by $(a,z)(a',z')=(aa'+za'+z'a,zz')$.
We use $\widetilde{A}$ to represent $A^{+}$ if $A$ is non-unital or to represent $A$ if $A$ is unital.

In order to control the matrix norm in quantitative $K$-theory of Banach algebras, we need to establish the matrix norm structure.
\begin{definition}\cite{daws}
 For $p\in[1,+\infty)$, an abstract $p$-operator space is a Banach space $X$ together with a family of norms $\Vert\cdot\Vert_n$ on $M_n(X)$ satisfying:
\begin{itemize}
\item $\mathcal{D}_\infty$: For $u\in M_n(X)$ and $v\in M_m(X)$, we have
$$\begin{Vmatrix}\begin{pmatrix} u&0\\0&v\end{pmatrix}\end{Vmatrix}_{n+m}=\mathrm{max}(\Vert u\Vert_n,\Vert v\Vert_m);$$
\item $\mathcal{M}_p$: For $u\in M_m(X)$, $\alpha\in M_{n,m}(\mathbb{C})$ and $\beta\in M_{m,n}(\mathbb{C})$, we have
$$\Vert\alpha u\beta\Vert_{n}\leq\Vert\alpha\Vert_{B\big(\bigoplus\limits_{i=1}^{m}\ell^{p},\bigoplus\limits_{i=1}^{n}\ell^{p}\big)}\Vert u\Vert_m\Vert \beta\Vert_{B\big(\bigoplus\limits_{i=1}^{n}\ell^{p},\bigoplus\limits_{i=1}^{m}\ell^{p}\big)}.$$
\end{itemize}
\end{definition}
Clearly, an $L^{p}$ operator algebra is an abstract $p$-operator space.
\begin{definition}\cite{Pisier}
Let $X$ and $Y$ be $p$-operator spaces, and let $\phi: X\rightarrow Y$ be a bounded linear map. For each $n\in\mathbb{N}$, let $\phi_n: M_n(X)\rightarrow M_n(Y)$ be the induced map given by $\phi_n([x_{ij}])=[\phi(x_{ij})]$. We say that $\phi$ is $p$-completely bounded if $\sup\limits_n\Vert\phi_n\Vert<\infty$. In this case, we let $\Vert\phi\Vert_{pcb}=\sup\limits_{n}\Vert\phi_n\Vert$.
\end{definition}
We say that $\phi$ is $p$-completely contractive if $\Vert\phi\Vert_{pcb}\leq 1$ and $\phi$ is $p$-completely isometric if $\Vert\phi\Vert_{pcb}=1$. 

\begin{definition}\cite{Banach-Chung}
Let $A$ and $B$ be filtered $L^{p}$ operator algebras with filtrations $(A_r)_{r>0}$ and $(B_r)_{r>0}$ respectively. A filtered homomorphism $\phi:A\rightarrow B$ is an algebra homomorphism such that 
\begin{itemize}
\item $\phi$ is $p$-completely bounded;

\item $\phi(A_r)\subset B_r$ for all $r>0$.
\end{itemize}
\end{definition}
If $\phi: A\rightarrow B$ is a filtered homomorphism, then it induces a filtered homomorphism $\phi^{+}: A^{+}\rightarrow B^{+}$ given by $\phi^{+}(a,z)=(\phi(a),z)$.
\begin{definition}\cite{Banach-Chung}
Let $A$ be a unital filtered $L^{p}$ operator algebra. For $0<\varepsilon<\frac{1}{20}$, $r>0$ and $N\geq 1$,
\begin{itemize}
\item an element $e\in A$ is called an $(\varepsilon,r,N)$-idempotent if $\Vert e^{2}-e\Vert<\varepsilon$, $e\in A_r$, and $\mathrm{max}(\Vert e\Vert,\Vert 1_{\tilde{A}}-e\Vert)\leq N$;
\item if $A$ is unital, an element $u\in A$ is called an $(\varepsilon,r,N)$-invertible if $u\in A_r$, $\Vert u\Vert\leq N$, and there exists $v\in A_r$ with $\Vert v\Vert\leq N$ such that $\mathrm{max}(\Vert uv-1 \Vert,\Vert vu-1 \Vert)<\varepsilon$.
\end{itemize}
\end{definition}
We call $v$ an $(\varepsilon,r,N)$-inverse for $u$ and we call $(u,v)$ an $(\varepsilon,r,N)$-inverse pair. In addition, $\varepsilon$ is called the control and $r$ is called the propogation of the $(\varepsilon,r,N)$-idempotent or of the $(\varepsilon,r,N)$-invertible.

Next, we recall the definitions of quantitative $K$-theory for filtered $L^{p}$ operator algebras. Given a filtered $L^{p}$ operator algebra $A$:
	\begin{itemize}
				\item we let $Idem^{\varepsilon,r,N}(A):=\{e\in A \mid e \text{ is an }(\varepsilon,r,N)$-idempotent\};
           \item we set $Idem^{\varepsilon,r,N}_{n}(A):=Idem^{\varepsilon,r,N}(M_{n}(A))$ for each $n\in\mathbb{N}$;
\item we have inclusions $Idem^{\varepsilon,r,N}_{n}(A)\hookrightarrow Idem^{\varepsilon,r,N}_{n+1}(A)  $, $e\mapsto\begin{pmatrix}e&0\\0&0\end{pmatrix}$;
				\item	 we set $Idem^{\varepsilon,r,N}_{\infty}(A):=\bigcup\limits_{n\in\mathbb{N}}Idem^{\varepsilon,r,N}_{n}(A)$;
          \item we define the equivalence relation $\sim$ on $Idem^{\varepsilon,r,N}_{\infty}(A)$ as follows: $e\sim f$ if and only if $e$ and $f$ are  $(4\varepsilon,r,4N)$-homotopic in $Idem^{4\varepsilon,r,4N}_{\infty}(A)$;
				\item	 we denote $[e]:=\{f\in Idem^{\varepsilon,r,N}_{\infty}(A)\mid f\sim e\text{ in }Idem^{\varepsilon,r,N}_{\infty}(A)\}$ ;
           \item $Idem^{\varepsilon,r,N}_{\infty}(A)/\sim:=\{[e] \mid e\in Idem^{\varepsilon,r,N}_{\infty}(A)\}$ and $[e]+[f]=\begin{bmatrix}\begin{pmatrix}e&0\\0&f\end{pmatrix}\end{bmatrix}$;
\item $Idem^{\varepsilon,r,N}_{\infty}(A)/\sim$ is an abelian semigroup with identity $[0]$.
			\end{itemize}
If we want to keep track of parameter changes, we write $[e]_{\varepsilon,r, N}$ instead of $[e]$.

\begin{definition}\cite{Banach-Chung}
Let $A$ be a filtered $L^{p}$ operator algebra. For $0<\varepsilon<\frac{1}{20}$, $r>0$ and $N\geq 1$,
\begin{itemize}
\item	 if $A$ is unital, define $K^{\varepsilon,r,N}_{0}(A)$ to be the Grothendieck group of $Idem^{\varepsilon,r,N}_{\infty}(A)/\sim$;
\item	 if $A$ is non-unital, define $K^{\varepsilon,r,N}_{0}(A):= ker\big(\pi_{*}: K^{\varepsilon,r,N}_{0}(A^+)\rightarrow K^{\varepsilon,r,N}_{0}(\mathbb{C})\big)$,
where $\pi:A^{+}\rightarrow\mathbb{C}$ is the usual quotient homomorphism, which is $p$-completely contractive.
\end{itemize}
\end{definition}
If $[e]-[f]\in K^{\varepsilon,r,N}_{0}(A)$, where $e,f\in M_{k}(\widetilde{A})$, then $[e]-[f]=[e']-[I_k]$ in $K_{0}^{\varepsilon,r,N}(A)$ for some $e'\in M_{2k}(\tilde{A})$. Therefore, if we relax control, we can write elements in $K^{\varepsilon,r,N}_{0}(A)$ in the form $[e]-[I_k]$ with $\pi(e)=diag(I_k,0)$.

Given a unital filtered $L^{p}$ operator algebra $A$:
	\begin{itemize}
				\item we let $GL^{\varepsilon,r,N}(A):=\{u\in A \mid u \text{ is an }(\varepsilon,r,N)$-invertible\};
				\item	 we set $GL^{\varepsilon,r,N}_{n}(A):=GL^{\varepsilon,r,N}(M_{n}(A))$ for each positive integer $n$;
\item we have inclusions $GL^{\varepsilon,r,N}_{n}(A)\hookrightarrow GL^{\varepsilon,r,N}_{n+1}(A)$, $ u\mapsto\begin{pmatrix}u&0\\0&1\end{pmatrix}$;
\item  we set $GL^{\varepsilon,r,N}_{\infty}(A):=\bigcup\limits_{n\in\mathbb{N}}GL^{\varepsilon,r,N}_{n}(A)$;
\item we define the equivalence relation $\sim$ on $GL^{\varepsilon,r,N}_{\infty}(A)$ as follows: $u\sim v$ if and only if $u$ and $v$ are  $(4\varepsilon,2r,4N)$-homotopic in $GL^{4\varepsilon,2r,4N}_{\infty}(A)$;
				\item	 we denote $[u]:=\{v\in GL^{\varepsilon,r,N}_{\infty}(A) \mid v\sim u\text{ in }GL^{\varepsilon,r,N}_{\infty}(A)\}$;
\item $GL^{\varepsilon,r,N}_{\infty}(A)/\sim:=\{[u]\mid u\in GL^{\varepsilon,r,N}_{\infty}(A)\}$ and $[u]+[v]=\begin{bmatrix}\begin{pmatrix}u&0\\0&v\end{pmatrix}\end{bmatrix}$;
\item $GL^{\varepsilon,r,N}_{\infty}(A)/\sim$ is an abelian group with identity $[1]$.
			\end{itemize}
If we want to take into account parameter changes, we usually write $[u]_{\varepsilon,r, N}$ instead of $[u]$.

\begin{definition}\cite{Banach-Chung}
Let $A$ be a unital filtered $L^{p}$ operator algebra. For $0<\varepsilon<\frac{1}{20}$, $r>0$ and $N\geq 1$,
\begin{itemize}
				\item	 if $A$ is unital, define $K^{\varepsilon,r,N}_{1}(A):= GL^{\varepsilon,r,N}_{\infty}(A)/\sim$;
          \item	if $A$ is non-unital, define $K^{\varepsilon,r,N}_{1}(A):= ker(\pi_{*}:K^{\varepsilon,r,N}_{1}(A^+)\rightarrow K^{\varepsilon,r,N}_{1}(\mathbb{C}))$.
\end{itemize}
\end{definition}

\begin{remark}\label{rmk 2.11}
If $e$ is an $(\varepsilon,r,N)$-idempotent in $A$, we can choose a function $\kappa_0$ that is holomorphic on a neighborhood of $Sp(e)$, and
 $$\kappa_{0}(z)=\begin{cases}
0, &z\in \bar{B}_{\sqrt{\varepsilon}}(0)\\
1, &z\in \bar{B}_{\sqrt{\varepsilon}}(1),
\end{cases} $$
then we apply holomorphic functional calculus to get an idempotent
$$\kappa_{0}(e)=\frac{1}{2\pi i}\int_{\gamma}\kappa_{0}(z)(z-e)^{-1}dz\in A,$$
and
$$\Vert\kappa_{0}(e)\Vert<\frac{N+1}{1-2\sqrt{\varepsilon}},$$
which implies that $\Vert\kappa_{0}(e)\Vert<2(N+1)$. Since each $(\varepsilon,r,N)$-invertible is invertible, we can define a function $\kappa_{1}$ such that $\kappa_{1}(u)=u$, thus $\Vert\kappa_{1}(u)\Vert\leq N$.
\end{remark}

\begin{definition}
For any filtered $L^p$ operator algebra $A$ and any positive numbers $r,r',\varepsilon,\varepsilon'$ and $N,N'\geq 1$ with $\varepsilon\leq\varepsilon'<\frac{1}{20}$, $r\leq r'$ and $N\leq N'$, we have natural group homomorphisms:
\begin{itemize}
\item $\iota_{0}: K_{0}^{\varepsilon,r,N}(A)\rightarrow K_{0}(A)$, $[e]_{\varepsilon,r,N}\mapsto [\kappa_{0}(e)]$;

\item $\iota_{1}: K_{1}^{\varepsilon,r,N}(A)\rightarrow K_{1}(A)$, $[u]_{\varepsilon,r,N}\mapsto [\kappa_{1}(u)]=[u]$;

\item $\iota_{*}= \iota_{0}\oplus\iota_{1}$;

\item $\iota^{\varepsilon',r',N'}_{0}: K^{\varepsilon,r,N}_{0}(A)\rightarrow K^{\varepsilon',r',N'}_{0}(A)$, $[e]_{\varepsilon,r,N}\mapsto [e]_{\varepsilon',r',N'}$;

\item $\iota^{\varepsilon',r',N'}_{1}: K^{\varepsilon,r,N}_{1}(A)\rightarrow K^{\varepsilon',r',N'}_{1}(A)$, $[u]_{\varepsilon,r,N}\mapsto [u]_{\varepsilon',r',N'}$;

\item $\iota^{\varepsilon',r',N'}_{*} =\iota^{\varepsilon',r',N'}_{0}\oplus\iota^{\varepsilon',r',N'}_{1}$.
\end{itemize}
\end{definition}
\begin{remark}
 We sometimes refer to these natural homomorphisms as relaxation of control maps. In addition, from the above definition, we know that the origin of variable parameters of quasi-idempotents or quasi-invertibles, thus we only mark the destination of the parameters to reduce to three superscripts.
\end{remark}

\begin{proposition}\cite{Banach-Chung}\label{prop 2.11}
There exists a polynomial $\rho\geq 1$ with positive coefficients such that for any filtered $L^{p}$ operator algebra $A$, any $\varepsilon\in (0,\frac{1}{20\rho(N)})$, any $r>0$ and any $N\geq 1$, the following holds:

Let $[x], [x']$ be in $K^{\varepsilon,r,N}_{*}(A)$ such that $\iota_{*}([x])=\iota_{*}([x'])$ in $K_{*}(A)$, there exist $r'\geq r$ and $N'\geq N$ such that 
$$\iota^{\rho(N)\varepsilon, r',N'}_{*}([x])=\iota^{\rho(N)\varepsilon, r',N'}_{*}([x']) \text{ in } K^{\rho(N)\varepsilon, r',N'}_{*}(A).$$
\end{proposition}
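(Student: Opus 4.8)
The plan is to prove \emph{controlled injectivity} of the comparison map $\iota_*$ by approximating, through finite-propagation data, whatever honest $K$-theoretic equivalence witnesses the equality $\iota_*([x])=\iota_*([x'])$. Since $K_*^{\varepsilon,r,N}=K_0^{\varepsilon,r,N}\oplus K_1^{\varepsilon,r,N}$ and $\iota_*=\iota_0\oplus\iota_1$, I would treat the two parities separately. In each case I would first reduce, after stabilising and relaxing control by a universal amount, to comparing two single representatives: for $K_0$ an $(\varepsilon,r,N)$-idempotent $e$ against $e'$ (using that every class has the normal form $[e]-[I_k]$ recorded after the definition of $K_0^{\varepsilon,r,N}$), and for $K_1$ an $(\varepsilon,r,N)$-invertible $u$ against $u'$ of a common matrix size. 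The goal in each parity is then to manufacture a homotopy from $e$ to $e'$ (resp. $u$ to $u'$) lying entirely inside $(4\rho(N)\varepsilon,r',4N')$-idempotents (resp. invertibles), which is exactly what equality in $K_0^{\rho(N)\varepsilon,r',N'}$ (resp. $K_1^{\rho(N)\varepsilon,r',N'}$) demands.

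The $K_1$ case is the cleaner model and I would do it first. Because $\kappa_1(u)=u$, the hypothesis $\iota_1([u])=\iota_1([u'])$ says precisely that the honest invertibles $u,u'$ lie in the same path component of $GL_\infty(\widetilde A)$ after stabilisation (a standard fact, \cite{Blackadar}). I would fix a continuous path $t\mapsto w_t$ of invertibles joining them, use uniform continuity to pick a partition $0=t_0<\dots<t_m=1$ with $\|w_{t_i}-w_{t_{i+1}}\|$ as small as desired, and then invoke density of $\bigcup_{r>0}A_r$ to replace each $w_{t_i}$ by a finite-propagation element $\tilde w_i$ with $\|\tilde w_i-w_{t_i}\|$ tiny (keeping $\tilde w_0=u$, $\tilde w_m=u'$). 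A one-line perturbation estimate shows that a $\delta$-perturbation of an $(\varepsilon,r,N)$-invertible is again an $(\varepsilon+\delta N,r,N)$-invertible with the same quasi-inverse, so each $\tilde w_i$ and each linear segment between consecutive $\tilde w_i$ stays among quasi-invertibles whose control is $\varepsilon$ plus a term I can force below $\rho(N)\varepsilon$. Concatenating these segments produces the required controlled homotopy; here $r'$ is the (finite) maximum of the propagations of the $\tilde w_i$ and $N'$ bounds their norms.

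For $K_0$ the scheme is the same --- discretise an honest equivalence and quantise it by density --- but there is the extra step of passing between the quasi-idempotent $e$ and the genuine idempotent $\kappa_0(e)$ to which $\iota_0$ sends it. The hypothesis gives $[\kappa_0(e)]=[\kappa_0(e')]$ in $K_0(A)$, so after stabilisation these honest idempotents are joined by a norm-continuous path $t\mapsto p_t$ of idempotents \cite{Blackadar}. I would discretise this path, approximate each $p_{t_i}$ by a finite-propagation element within $\eta\le\varepsilon$, and connect consecutive approximants by linear segments; the elementary estimate $\|s^2-s\|\le\varepsilon_0+\|h\|(2\|s_0\|+1)+\|h\|^2$ for $s=s_0+th$ (with $h=s_1-s_0$ the difference of consecutive approximants and $\varepsilon_0$ the control of $s_0$) shows each such segment consists of quasi-idempotents with small control and norm $O(\sup_t\|p_t\|)$, which feed into $\rho(N)$ and $N'$ respectively. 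The two ends must then be bridged to $e$ and $e'$ themselves.

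The main obstacle, and the point I would be most careful about, is the \emph{linear-in-$\varepsilon$} control of this end-bridge, i.e. the passage between $e$ and a finite-propagation approximant of $\kappa_0(e)$. The naive route through the holomorphic functional calculus of Remark \ref{rmk 2.11} is fatal here: since the spectrum of $e$ only avoids $\{0,1\}$ at scale $\sqrt{\varepsilon}$ and the $L^p$ functional calculus is not isometric (unlike the self-adjoint $C^*$ case), the contour estimate yields only $\|\kappa_0(e)-e\|=O(N\sqrt{\varepsilon})$, which would spoil the required $\rho(N)\varepsilon$. The fix I would use is the explicit square-root formula $\kappa_0(e)=\tfrac12+\bigl(e-\tfrac12\bigr)\bigl(1+4(e^2-e)\bigr)^{-1/2}$, valid for $\varepsilon<\tfrac14$: expanding $(1+4(e^2-e))^{-1/2}-1$ as a Neumann series in the \emph{small} element $e^2-e$ gives $\|\kappa_0(e)-e\|\le 4(N+\tfrac12)\varepsilon$, linear in $\varepsilon$, so the linear segment from $e$ to an $\eta$-approximant of $\kappa_0(e)$ has defect $O(N^2\varepsilon)$, as needed. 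Assembling the finitely many universal multiplicative constants arising along the way --- the factor $4(N+\tfrac12)$ from the square-root formula, the factor $2N+1$ from the interpolation estimates, the bound $2(N+1)$ of Remark \ref{rmk 2.11}, and the $4$'s built into the definition of the equivalence relations --- into a single polynomial $\rho(N)\ge 1$ independent of $A$, $\varepsilon$ and $r$ is the remaining bookkeeping; its uniformity is automatic, since every estimate used depends only on norms and on the universal square-root expansion.
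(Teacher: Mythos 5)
Your proposal is correct, and in substance it is the same argument as the standard one (the proposition is quoted from \cite{Banach-Chung}, and this paper runs the identical scheme when it proves Proposition \ref{prop 3.24}): take the honest homotopy of idempotents/invertibles witnessing $\iota_*([x])=\iota_*([x'])$, replace it by finite-propagation data, interpolate linearly, and bridge the endpoints back to the original quasi-elements using a linear-in-$\varepsilon$ bound on $\Vert\kappa_0(e)-e\Vert$; the propagation $r'$ comes from the approximation and $N'$ from the norm of the homotopy path, exactly as recorded in Remark \ref{rmk 2.13}. The one place you genuinely deviate rests on a false premise. You assert that the contour-integral definition of $\kappa_0$ in Remark \ref{rmk 2.11} can only give $\Vert\kappa_0(e)-e\Vert=O(N\sqrt{\varepsilon})$ and is therefore ``fatal''. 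That is not so: the $\sqrt{\varepsilon}$ enters only in locating $Sp(e)$ inside $\bar{B}_{\sqrt{\varepsilon}}(0)\cup\bar{B}_{\sqrt{\varepsilon}}(1)$, while the contour itself may be kept at a fixed distance from $\{0,1\}$ (say circles of radius $\tfrac12$), and the identity $(z-e)(z-1+e)=(z^2-z)-(e^2-e)$ lets one pull the factor $e^2-e$, of norm $<\varepsilon$, out of the resolvent; this yields $\Vert\kappa_0(e)-e\Vert\le\frac{2(N+1)\varepsilon}{(1-\sqrt{\varepsilon})(1-2\sqrt{\varepsilon})}<6(N+1)\varepsilon$, which is precisely the estimate this paper invokes verbatim in the proof of Proposition \ref{prop 3.24}. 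Your substitute, $\kappa_0(e)=\tfrac12+\bigl(e-\tfrac12\bigr)\bigl(1+4(e^2-e)\bigr)^{-1/2}$ expanded as a binomial series in $e^2-e$, is a perfectly valid alternative and yields the same $O(N\varepsilon)$ bound, so nothing in your proof breaks; what it buys is independence from holomorphic functional calculus, at the cost of an unnecessary detour, and the claim that the functional-calculus route fails should simply be deleted. Two smaller points to tidy up: in the $K_1$ step, the quasi-inverse of $\tilde w_i$ must itself be a finite-propagation approximant of $w_{t_i}^{-1}$ (the inverse of an honest invertible need not have finite propagation, so ``the same quasi-inverse'' is not quite right), and a $\delta$-perturbation of an $(\varepsilon,r,N)$-invertible has norm at most $N+\delta$ rather than $N$ --- both harmless, since all such constants are absorbed into $\rho(N)$, $r'$ and $N'$.
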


\begin{remark}\label{rmk 2.13}
From the proof of Proposition 3.21 in \cite{Banach-Chung}, we know that the choice of $N'$ depends on the norm of the homotopy path of the idempotents or invertibles, and we can choose 
$$\rho(N)=\begin{cases}
1+\frac{9}{20}(N+1)^{2}, \quad &*=0\\
1, \quad &*=1.
\end{cases} $$
\end{remark}

The item (ii) of the next proposition is a consequence of the preceding proposition. 
\begin{proposition}\cite{Banach-Chung}\label{prop 2.12}
Let $A$ be an $L^{p}$ operator algebra filtered by $(A_{r})_{r>0}$.

$\mathrm(i)$ For any $\varepsilon\in (0,\frac{1}{20})$ and any $[y]\in K_{*}(A)$, there exist $r>0$, $N\geq 1$ and $[x]\in K^{\varepsilon,r,N}_{*}(A)$ such that $\iota_{*}([x])=[y]$.

$\mathrm(ii)$ There exists a polynomial $\rho\geq 1$ with positive coefficients such that the following is satisfied: for $\varepsilon\in (0,\frac{1}{20\rho(N)})$, $r>0$ and $N\geq 1$, let $[x]$ be an element of $ K^{\varepsilon,r,N}_{*}(A)$ such that $\iota_{*}([x])=0$ in $K _{*}(A)$. Then there exist $r'\geq r$ and $N'\geq N$ such that
$$\iota^{\rho(N)\varepsilon, r', N'}_{*}([x])=0 \text{ in } K^{\rho(N)\varepsilon, r', N'}_{*}(A).$$
\end{proposition}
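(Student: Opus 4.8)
The plan is to treat the two items separately. Item (i) is a surjectivity statement that I would prove directly from the density of $\bigcup_{r>0}A_r$ in $A$ together with the stability of idempotents and invertibles under small perturbations, while item (ii) is essentially the special case $[x']=0$ of Proposition \ref{prop 2.11}.

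For item (i) I would first reduce an arbitrary class $[y]\in K_*(A)$ to a standard representative. For $*=0$, as noted after the definition of $K_0$, I can write $[y]=[p]-[I_k]$ with $p\in M_{2k}(\widetilde{A})$ a genuine idempotent satisfying $\pi(p)=\mathrm{diag}(I_k,0)$; for $*=1$, $[y]$ is represented by an honest invertible $u\in M_n(\widetilde{A})$. Writing $p=a+s$ with $a\in M_{2k}(A)$ and $s=\pi(p)$ scalar, I would use density of $\bigcup_r M_{2k}(A_r)$ to pick $a'\in M_{2k}(A_r)$ with $\|a'-a\|$ as small as I like and set $e=a'+s$, so that $e$ has finite propagation $r$ and $\pi(e)=\pi(p)$. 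A short estimate of the form $\|e^2-e\|\le(\|e\|+\|p\|)\|e-p\|+\|e-p\|$ then shows that, once $\|e-p\|$ is small enough, $e$ is an $(\varepsilon,r,N)$-idempotent with $N=\max(\|p\|,\|1-p\|)+1$, while $[I_k]$ is already a quantitative class coming from the scalar part. The analogous construction for $*=1$ approximates both $u$ and $u^{-1}$ by finite-propagation elements $w,v$ and checks that $(w,v)$ is an $(\varepsilon,r,N)$-inverse pair. In either case I then set $[x]=[e]_{\varepsilon,r,N}-[I_k]$, respectively $[x]=[w]_{\varepsilon,r,N}$.

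The crux of item (i) is verifying that $\iota_*([x])=[y]$. Here I would invoke the continuity of the holomorphic functional calculus from Remark \ref{rmk 2.11}: since $\|e-p\|$ is small, the spectra of $e$ and $p$ are close, so the straight-line path $e_t=(1-t)p+te$ consists of quasi-idempotents on whose spectra $\kappa_0$ is holomorphic, and $t\mapsto\kappa_0(e_t)$ is a homotopy of genuine idempotents from $\kappa_0(p)=p$ to $\kappa_0(e)$. Hence $\iota_0([e])=[\kappa_0(e)]=[p]$ and $\iota_0([x])=[p]-[I_k]=[y]$. For $*=1$ one has $\iota_1([w])=[\kappa_1(w)]=[w]$, and the straight-line homotopy from $w$ to $u$ through invertibles gives $[w]=[u]=[y]$ in $K_1(A)$. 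I expect this functional-calculus/homotopy step to be the main obstacle, since it is where one must quantify how close the approximant has to be so that the spectral splitting survives and the perturbation stays inside the quasi-idempotents, respectively the quasi-invertibles; the density approximation itself is routine.

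For item (ii) I would simply apply Proposition \ref{prop 2.11} with $[x']=0$: since $\iota_*([x])=0=\iota_*([0])$ in $K_*(A)$, the proposition produces $r'\ge r$ and $N'\ge N$ with $\iota^{\rho(N)\varepsilon,r',N'}_*([x])=\iota^{\rho(N)\varepsilon,r',N'}_*([0])=0$, which is exactly the asserted conclusion for the same polynomial $\rho$.
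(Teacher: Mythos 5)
Your proposal is correct and follows essentially the same route as the paper: the paper derives item (ii) exactly as you do, by specializing Proposition \ref{prop 2.11} to the case $[x']=0$ (this is what the sentence ``the item (ii) of the next proposition is a consequence of the preceding proposition'' refers to), and item (i) is taken from Chung's Proposition 3.20, whose proof is precisely your density-plus-perturbation argument with the functional-calculus homotopy (the paper even reuses it later, before the definition of the quantitative assembly map, with the explicit tolerance $\Vert z-\widetilde{z}\Vert<\frac{\varepsilon}{6N(N+1)^{2}}$ yielding an $(\varepsilon,r',2N)$-idempotent, matching your choice $N=\max(\Vert p\Vert,\Vert 1-p\Vert)+1$ and Remark \ref{rmk 2.15}).
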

\begin{remark}\label{rmk 2.15}
From the proof of Proposition 3.20 in \cite{Banach-Chung}, we may put 
$$N=\begin{cases}
\Vert y\Vert+1, &*=0\\
\Vert y\Vert+\Vert y^{-1}\Vert+1, &*=1
\end{cases}$$ 
in the item (i) of the above proposition.
\end{remark}

\begin{definition}\cite{Banach-Chung}
 A control pair is a pair $(\lambda, h)$ such that
\begin{itemize}
\item $\lambda:[1,\infty)\rightarrow [1,\infty)$ is a non-decreasing function;

\item $h:(0,\frac{1}{20})\times [1,\infty)\rightarrow [1,\infty)$ is a function such that $h(\cdot,N)$ is non-increasing for fixed $N$.
\end{itemize}
We will write $\lambda_{N}$ for $\lambda(N)$, and $h_{\varepsilon, N}$ for $h(\varepsilon, N)$. Given two control pairs $(\lambda, h)$ and $(\lambda', h')$, we say that $(\lambda, h)\leq (\lambda', h')$ if $\lambda_{N}\leq \lambda'_{N}$  and $h_{\varepsilon, N}\leq h'_{\varepsilon, N}$ for all $\varepsilon\in(0, \frac{1}{20})$ and $N\geq 1$.
\end{definition}
Given a filtered $L^{p}$ operator algebra $A$, we write the families
$$\mathcal{K}_{i}(A)=(K^{\varepsilon,r,N}_{i}(A))_{0<\varepsilon<\frac{1}{20},r>0,N\geq 1},\text{ where } i\in\{0,1\}.$$

\begin{definition}\cite{Banach-Chung}
Let $A$ and $B$ be filtered $L^{p}$ operator algebras, and let $(\lambda,h)$ be a control pair. A $(\lambda,h)$-controlled morphism $\mathcal{F}:\mathcal{K}_{i}(A)\rightarrow \mathcal{K}_{j}(B)$, where $i,j\in\{0,1\}$, is a family 
$$\mathcal{F}=(F^{\varepsilon,r,N})_{0<\varepsilon<\frac{1}{20\lambda_{N}},r>0, N\geq 1}$$
of group homomorphisms
$$F^{\varepsilon,r,N}:K^{\varepsilon,r,N}_{i}(A)\rightarrow K_{j}^{\lambda_{N}\varepsilon,h_{\varepsilon,N}r,\lambda_{N}}(B)$$
such that whenever $0<\varepsilon\leq\varepsilon'<\frac{1}{20\lambda_{N'}}$, $h_{\varepsilon,N}r\leq h_{\varepsilon',N'}r'$ and $N\leq N'$, we have the following commutative diagram:
$$\begin{CD}
K^{\varepsilon,r,N}_{i}(A)@>{\iota_{i}}>>K^{\varepsilon',r',N'}_{i}(A)
\\@V{F^{\varepsilon,r,N}}VV@VV{F^{\varepsilon',r',N'}}V\\
K^{\lambda_{N}\varepsilon,h_{\varepsilon,N}r,\lambda_{N}}_{j}(B)@>{\iota_{j}}>>K^{\lambda_{N'}\varepsilon',h_{\varepsilon',N'}r',\lambda_{N'}}_{j}(B).
\end{CD}$$
We write $\iota_{i}$ for $\iota^{\varepsilon',r',N'}_{i}$ and $\iota_{j}$ for $\iota^{\lambda_{N'}\varepsilon',h_{\varepsilon',N'}r',\lambda_{N'}}_{j}$.
We say that $\mathcal{F}$ is a controlled morphism if it is a $(\lambda,h)$-controlled morphism for some control pair $(\lambda,h)$.
\end{definition}

\begin{definition}\cite{Banach-Chung}
Let $A$ and $B$ be filtered $L^{p}$ operator algebras. Let $\mathcal{F}:\mathcal{K}_{i}(A)\rightarrow \mathcal{K}_{j}(B)$ and $\mathcal{G}:\mathcal{K}_{i}(A)\rightarrow \mathcal{K}_{j}(B)$ be $(\lambda^{\mathcal{F}},h^{\mathcal{F}})$-controlled and $(\lambda^{\mathcal{G}},h^{\mathcal{G}})$-controlled morphisms respectively. Let $(\lambda,h)$ be a control pair. We write $\mathcal{F}\overset{(\lambda,h)}{\sim}\mathcal{G}$ if $(\lambda^{\mathcal{F}},h^{\mathcal{F}})\leq (\lambda,h), (\lambda^{\mathcal{G}},h^{\mathcal{G}})\leq (\lambda,h)$, and the following diagram commutes whenever $0<\varepsilon<\frac{1}{20\lambda_{N}},r>0$, and $N\geq 1$:
\begin{center}
\quad\xymatrix@R=0.5cm{
          &      K_j^{\lambda_{N}^{\mathcal{F}}\varepsilon,h^\mathcal{F}_{\varepsilon,N}r,\lambda_N^\mathcal{F}}(B)\ar[dr]^{\iota_j}     \\
\quad\quad\quad K_i^{\varepsilon,r,N}(A)\ar[ur]^{F^{\varepsilon,r,N}} \ar[dr]_{G^{\varepsilon,r,N}}&&K_j^{{\lambda_N}\varepsilon,h_{\varepsilon,N}r,\lambda_N}(B)               \\
              &      K_j^{\lambda_{N}^{\mathcal{G}}\varepsilon,h_{\varepsilon,N}^{\mathcal{G}}r,\lambda_N^{\mathcal{G}}}(B) \ar[ur]_{\iota_j}              .}
\end{center}
Observe that if $\mathcal{F}\overset{(\lambda,h)}{\sim}\mathcal{G}$ for some control pair $(\lambda,h)$, then $\mathcal{F}$ and $\mathcal{G}$ induce the same homomorphism in $K$-theory.
\end{definition}

\begin{definition}\cite{Banach-Chung}
Let $A$ and $B$ be filtered $L^{p}$ operator algebras. Let $(\lambda,h)$ be a control pair, and let $\mathcal{F}:\mathcal{K}_{i}(A)\rightarrow \mathcal{K}_{j}(B)$ be a $(\lambda^\mathcal{F},h^\mathcal{F})$-controlled morphism with $(\lambda^\mathcal{F}, h^\mathcal{F})\leq(\lambda,h).$
\begin{itemize}
\item We say that $\mathcal{F}$ is left (resp. right) $(\lambda,h)$-invertible if there exists a controlled morphism $\mathcal{G}:\mathcal{K}_{j}(B)\rightarrow \mathcal{K}_{i}(A)$ such that $\mathcal{G}\circ\mathcal{F}\overset{(\lambda,h)}{\sim}\mathcal{I}d_{\mathcal{K}_{i}(A)}$ (resp. $\mathcal{F}\circ\mathcal{G}\overset{(\lambda,h)}{\sim}\mathcal{I}d_{\mathcal{K}_{j}(B)}$). In this case, we call $\mathcal{G}$ a left (resp. right) $(\lambda,h)$-inverse for $\mathcal{F}$.
\item We say that $\mathcal{F}$ is $(\lambda,h)$-invertible or a $(\lambda,h)$-isomorphism if there exists a controlled morphism $\mathcal{G}:\mathcal{K}_{j}(B)\rightarrow \mathcal{K}_{i}(A)$ that is both a left $(\lambda,h)$-inverse and a right $(\lambda,h)$-inverse for $\mathcal{F}$. In this case, we call $\mathcal{G}$ a $(\lambda,h)$-inverse for $\mathcal{F}$.
\end{itemize}
We say that $\mathcal{F}$ is a controlled isomorphism if it is a $(\lambda,h)$-isomorphism for some control pair $(\lambda,h)$.
\end{definition}

\begin{definition}\cite{Banach-Chung}
Let $A$ and $B$ be filtered $L^{p}$ operator algebras. Let $(\lambda,h)$ be a control pair, and let $\mathcal{F}:\mathcal{K}_{i}(A)\rightarrow \mathcal{K}_{j}(B)$ be a $(\lambda^\mathcal{F}, h^\mathcal{F})$-controlled morphism with $(\lambda^\mathcal{F},h^\mathcal{F})\leq (\lambda,h)$.
\begin{itemize}
\item We say that $\mathcal{F}$ is $(\lambda,h)$-injective if for any $0<\varepsilon<\frac{1}{20\lambda_{N}}$, $r>0$, $N\geq 1$, and $[x]\in K^{\varepsilon,r,N}_{i}(A)$, if $F^{\varepsilon,r,N}([x])=0$ in $K^{\lambda^{F}_{N}\varepsilon,h^{\mathcal{F}}_{\varepsilon,N}r,\lambda^{F}_{N}}_{j}(B)$, then $\iota^{\lambda_{N}\varepsilon,h_{\varepsilon,N}r,\lambda_{N}}_{i}([x])=0$ in $K_{i}^{\lambda_{N}\varepsilon,h_{\varepsilon,N}r,\lambda_{N}}(A)$.

\item We say that $\mathcal{F}$ is $(\lambda,h)$-surjective if for any $0<\varepsilon<\frac{1}{20(\lambda_{F}\cdot\lambda)_{N}}$, $r>0$, $N\geq 1$, and $[y]\in K^{\varepsilon,r,N}_{j}(B)$, there exists $[x]\in K_{i}^{\lambda_{N}\varepsilon,h_{\varepsilon,N}r,\lambda_{N}}(A)$ such that 
$$F^{\lambda_{N}\varepsilon,h_{\varepsilon,N}r,\lambda_{N}}([x])=\iota^{(\lambda^{\mathcal{F}}\cdot\lambda)_{N}\varepsilon,(h^{\mathcal{F}}\cdot h)_{\varepsilon,N}r,(\lambda^{\mathcal{F}}\cdot\lambda)_{N}}_{j}([y])\text{ in }K_{j}^{(\lambda^{\mathcal{F}}\cdot\lambda)_{N}\varepsilon,(h^{\mathcal{F}}\cdot h)_{\varepsilon,N}r,(\lambda^{\mathcal{F}}\cdot\lambda)_{N}}(B).$$
\end{itemize}
\end{definition}

\begin{proposition}\cite{Banach-Chung}\label{prop 2.19}
Let $A$ be a unital filtered $L^{p}$ operator algebra. 

$\mathrm (i)$ If $e$ and $f$ are homotopic as $(\varepsilon,r,N)$-idempotents in $A$, then there exist $\alpha_N>0$, an integer $ k$ and an $\alpha_N$-Lipschitz homotopy of $(2\varepsilon,r,\frac{5}{2}N)$-idempotents between $diag(e,I_k,0_k)$ and diag$(f,I_k,0_k)$.

$\mathrm (ii)$ If $u$ and $v$ are homotopic as $(\varepsilon,r,N)$-invertibles in $A$, then there exist $\beta_N>0$, an integer $ k$ and a $\beta_N$-Lipschitz homotopy of $((4N^{2}+2)\varepsilon,2r,2(N+\varepsilon))$-invertibles between $diag(u,I_k)$ and diag$(v,I_k)$.
\end{proposition}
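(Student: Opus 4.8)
The plan is to reduce both statements to the case of two \emph{nearby} elements and to build the Lipschitz homotopy segment by segment. A homotopy of $(\varepsilon,r,N)$-idempotents is a norm-continuous path $t\mapsto e_t$ with values in $Idem^{\varepsilon,r,N}(M_n(A))$; since $[0,1]$ is compact, the path is uniformly continuous, so for any prescribed $\delta>0$ I can choose a partition $0=t_0<t_1<\cdots<t_m=1$ with $\|e_{t_{i+1}}-e_{t_i}\|\le\delta$. It then suffices to join each consecutive pair by a controlled Lipschitz path, concatenate, and reparametrise to constant speed on $[0,1]$; the resulting constant $\alpha_N$ is the largest speed occurring among the finitely many segments. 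The same compactness reduction applies verbatim to a homotopy of $(\varepsilon,r,N)$-invertibles.

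For part (i) I would connect nearby quasi-idempotents $a=e_{t_i}$ and $b=e_{t_{i+1}}$ by the straight line $g_s=(1-s)a+sb$. Two features make this work. First, propagation is preserved: since $A_r$ is a closed \emph{linear} subspace, $g_s\in M_n(A)_r$ for every $s$, which is exactly why the propagation in the conclusion remains equal to $r$. Second, the quadratic defect is controlled: writing $w=b-a$ one has $g_s^2-g_s=(a^2-a)+s(aw+wa-w)+s^2w^2$, whence $\|g_s^2-g_s\|\le \varepsilon+(2N+1)\delta+\delta^2\le 2\varepsilon$ once $\delta$ is small, while convexity gives $\max(\|g_s\|,\|1-g_s\|)\le N$ directly. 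The extra blocks in $\mathrm{diag}(\,\cdot\,,I_k,0_k)$ and the factor $\tfrac52$ enter only when I pass from the inductive-limit homotopy in $Idem_\infty$ to a genuine homotopy of fixed matrix size and insert the rotation moves that let me glue the segments while keeping norms and the Lipschitz constant controlled: the padding supplies the room for $2\times2$ rotation matrices, and since in the $L^p$ world these rotations are not isometric, conjugating by them inflates the norm bound by a controlled factor absorbed into $\tfrac52 N$.

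For part (ii) the straight line between quasi-invertibles is problematic, since a convex combination of invertibles carries no obvious controlled inverse. Instead I would connect $\mathrm{diag}(u,I_k)$ and $\mathrm{diag}(v,I_k)$ by a Whitehead-type rotation homotopy: for nearby $u_0,u_1$ with near-inverses $w_0,w_1$, I rotate between the relevant blocks using $2\times2$ rotation matrices together with $\mathrm{diag}(u_1,1)$ and $\mathrm{diag}(w_0,1)$, so that the entries of the path are products such as $u_1w_0$ of two propagation-$r$ elements. This is exactly what forces the doubled propagation $2r$ in the conclusion; the norm $\|u_1w_0\|\le N^2$ of such products is responsible for the quadratic control $(4N^2+2)\varepsilon$, and conjugation by the non-isometric $L^p$ rotations inflates the norm to $2(N+\varepsilon)$. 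At every stage I must exhibit an explicit near-inverse and bound its norm and propagation uniformly.

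I expect part (ii) to be the main obstacle. In the idempotent case the linear interpolation already delivers the correct propagation $r$ together with a clean quadratic estimate, so only the gluing and stabilisation require care. In the invertible case, by contrast, invertibility is not preserved under convex combinations, so the whole homotopy must be engineered out of rotations and products, and the delicate point is to keep the control $\varepsilon'$, the propagation $r'$ and the norm $N'$ simultaneously bounded: tracking the near-inverses along the rotation is where all the constants $(4N^2+2,\,2r,\,2(N+\varepsilon))$ are generated and where the estimate is easiest to get wrong.
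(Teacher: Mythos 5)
Your per-segment analysis is fine --- the defect bound $\Vert g_s^2-g_s\Vert\le\varepsilon+(2N+1)\delta+\delta^2$, the convexity bound $\max(\Vert g_s\Vert,\Vert 1-g_s\Vert)\le N$, and the observation that propagation survives linear interpolation because $A_r$ is a closed linear subspace are all correct --- but the way you produce the Lipschitz constant is a genuine gap, and it sits at the heart of the statement. You concatenate the $m$ linear segments and take ``$\alpha_N$ to be the largest speed occurring among the finitely many segments''. Whether you traverse each segment in time $1/m$ (speed $\approx m\delta$) or reparametrise to constant speed (speed $=$ total length of the piecewise-linear path), the resulting constant depends on $m$, i.e.\ on the modulus of continuity of the particular homotopy $(e_t)$, and not only on $N$. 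The subscript in $\alpha_N$, $\beta_N$ is the entire content of the proposition: if a homotopy-dependent constant were allowed, the statement would be trivially true with $k=0$ and the better parameters $(2\varepsilon,r,N)$ --- exactly your segments --- and the stabilisation by $I_k,0_k$ and the inflation to $\tfrac52N$ would serve no purpose. The uniformity is also precisely what this paper uses: in Lemma \ref{lemma 2} (see the remark following it, which extracts $\lambda_N=\tfrac52N$ and $h(\cdot,N)=2$ from this proposition) one must assemble homotopies chosen separately in each $A_i$ into a single homotopy in the product algebra $\mathcal{A}^{\infty}_{c}$, and that diagonal path is continuous and controlled only because every component moves with the same speed bound $\alpha_N$ and the same parameters $(2\varepsilon,r,\tfrac52N)$.

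The mechanism that achieves this --- Chung's Lemma 2.29, which the present paper does not reprove but reproduces for genuine idempotents in its own Lemma \ref{lemma 3.16} --- is the one your sketch relegates to bookkeeping: $k$ is the number of partition points, and the construction is arranged so that the homotopy-dependence goes into the matrix size, never into the speed. One runs the path through the stages
\begin{align*}
&\mathrm{diag}(e_{t_0},I_k,0_k)\sim\mathrm{diag}(e_{t_0},1,0,\dots,1,0)\sim\mathrm{diag}(e_{t_0},1-e_{t_1},e_{t_1},\dots,1-e_{t_k},e_{t_k})\\
&\qquad\sim\mathrm{diag}(e_{t_0},1-e_{t_0},e_{t_1},1-e_{t_1},\dots,e_{t_k})\sim\mathrm{diag}(1,0,\dots,1,0,e_{t_k})\sim\mathrm{diag}(e_{t_k},I_k,0_k),
\end{align*}
where the creation and annihilation moves $\mathrm{diag}(1,0)\sim\mathrm{diag}(1-e_{t_i},e_{t_i})$ are rotation homotopies run \emph{simultaneously} in the $k$ parallel $2\times2$ blocks --- their speed is a universal function of $N$, and the non-isometric scalar rotations on $\ell^p\oplus\ell^p$ are what produce $\tfrac52N$ --- while the only remaining move is one linear interpolation between matrices at distance $\le\delta$. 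Your segment estimates certify that single small move; the segments are never concatenated in time. Part (ii) needs the same parallelisation: your Whitehead-type moves are the right local ingredient (and correctly account for the doubled propagation $2r$ and the quadratic factor in the control), but performed segment after segment they again give a speed proportional to the number of segments; they must be run simultaneously on the parallel blocks supplied by the padding $I_k$, together with one small linear move, exactly as in the idempotent case. A smaller inconsistency in your part (ii): if entries of your path were honestly products $u_1w_0$ of norm $\approx N^2$, the path could not consist of $\bigl(\cdot,\cdot,2(N+\varepsilon)\bigr)$-invertibles; the products that actually occur are of the form $u_{t_i}w_{t_{i-1}}$, which are within about $\varepsilon+N\delta$ of $1$, and $N^2$ enters only through the defect estimates, not through the norm bound.
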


\begin{remark}
In fact, the proof of item (ii) is similar to that of item (i) [\cite{Banach-Chung}, Lemma 2.29]. 
\end{remark}

\begin{remark}
Let $A$ be an $L^{p}$ operator algebra, and let $\otimes$ denote the spatial $L^{p}$ operator tensor product. $M_{n}(A)$ can be regarded as $M_{n}(C)\otimes A$ when $M_{n}(\mathbb{C})$ is viewed as $B(\bigoplus\limits_{i=1}^{n}\ell^{p})$. Recall from Proposition 1.8 and Example 1.10 in \cite{Phillips}, we see that $\overline{M^{p}_{\infty}}=\mathscr{K}(\ell^{p})$ for $p\in (1,\infty)$ when $\overline{M^{p}_{\infty}}$ denotes $\overline{\bigcup\limits_{n\in\mathbb{N}} M_{n}(\mathbb{C})}^{\Vert\cdot\Vert_{\ell^{p}}}$. However, when $p=1$, there is a rank one operator on $\ell^{1}$ that is not in $\overline{M^{1}_{\infty}}$.

Now we collect some concepts of \cite{Phillips} concerning $L^{p}$ operator tensor products. For $p\in[1,\infty)$ and for measure spaces $(X,\mu)$ and $(Y,\nu)$, there is an $L^{p}$ tensor product such that we have a canonical isometric isomorphism $L^{p}(X,\mu)\otimes L^{p}(Y,\nu)\cong L^{p}(X\times Y,\mu\times\nu)$ via $(x,y)\mapsto\xi(x)\eta(y)$ for any $\xi\in L^{p}(X,\mu)$, $\eta\in L^{p}(Y,\nu)$, this tensor product has the following properties:
\begin{itemize}
\item Under the previous isomorphism, the linear span of all $\xi\otimes\eta$ is dense in $L^{p}(X\times Y,\mu\times\nu)$;
\item $\Vert\xi\otimes\eta\Vert_{p}=\Vert\xi\Vert_{p}\Vert\eta\Vert_{p}$ for all $\xi\in L^{p}(X,\mu)$ and $\eta\in L^{p}(Y,\nu)$;
\item The tensor product is commutative and associative;
\item If $a\in B\big(L^{p}(X_{1},\mu_{1}),L^{p}(X_{2},\mu_{2})\big)$ and $b\in B\big(L^{p}(Y_{1},\nu_{1}),L^{p}(Y_{2},\nu_{2})\big)$, then there exists a unique 
$$c\in B\big(L^{p}(X_{1}\times Y_{1},\mu_{1}\times\nu_{1}), L^{p}(X_{2}\times Y_{2}, \mu_{2}\times\nu_{2})\big)$$
such that $c(\xi\otimes\eta)=a(\xi)\otimes b(\eta)$ for all $\xi\in L^{p}(X_{1},\mu_{1})$ and $\eta\in L^{p}(Y_{1},\nu_{1})$. We will denote this operator by $a\otimes b$, thus $\Vert a\otimes b\Vert=\Vert a\Vert\Vert b\Vert$;
\item The tensor product of operators is associative, bilinear, and satisfies $(a_{1}\otimes b_{1})(a_{2}\otimes b_{2})=a_{1}a_{2}\otimes b_{1}b_{2}$.
\end{itemize}

If $A\subset B\big(L^{p}(X,\mu)\big)$ and $B\subset B\big(L^{p}(Y,\nu)\big)$ are norm-closed subalgebras, we can define $A\otimes B\subset B(L^{p}(X\times Y,\mu\times\nu))$ to be the closed linear span of all elements of the form $a\otimes b$ with $a\in A$ and $b\in B$.
\end{remark}

\begin{proposition}\cite{Banach-Chung}\label{prop2}
If $A$ is a filtered $L^{p}$ operator algebra for some $p\in(1,\infty)$, then the homomorphism
$$A\rightarrow \mathscr{K}(\ell^{p})\otimes A,\quad a\mapsto \begin{pmatrix}a&&\\&0&\\&&\ddots\end{pmatrix}$$
induces a group isomorphism (the Morita equivalence)
$$K^{\varepsilon,r,N}_{*}(A)\rightarrow K^{\varepsilon,r,N}_{*}(\mathscr{K}(\ell^{p})\otimes A).$$
\end{proposition}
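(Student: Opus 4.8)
The plan is to establish the isomorphism by constructing an explicit inverse to the corner inclusion $\varphi: A \to \mathscr{K}(\ell^p)\otimes A$ at the level of quantitative $K$-groups, exploiting the fact that $\mathscr{K}(\ell^p) = \overline{M^p_\infty}$ for $p \in (1,\infty)$ (as recalled in the remark preceding the statement). The key observation is that $\mathscr{K}(\ell^p)\otimes A$ is filtered: since each $M_n(\mathbb{C}) \cong B(\bigoplus_{i=1}^n \ell^p)$ carries propagation zero (matrix units have zero propagation relative to the filtration on $A$), the filtration $(\mathscr{K}(\ell^p)\otimes A)_r$ is the closure of $\bigcup_n M_n(A_r)$. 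Consequently an $(\varepsilon,r,N)$-idempotent or $(\varepsilon,r,N)$-invertible in $M_k(\mathscr{K}(\ell^p)\otimes A)$ can be approximated, up to arbitrarily small perturbation in norm, by a finite matrix over $A_r$, i.e. by an element of $M_{nk}(A_r)$ for some large $n$. First I would make this approximation quantitative: given a quasi-idempotent $e$ with $\|e^2 - e\| < \varepsilon$, a small perturbation $\tilde e \in M_{nk}(A)$ with $\|\tilde e - e\|$ sufficiently small remains an $(\varepsilon',r,N')$-idempotent for controlled $\varepsilon' $ close to $\varepsilon$ and $N'$ close to $N$, and is homotopic to $e$ through quasi-idempotents of the same propagation; the same argument handles quasi-invertibles using the estimate that nearby elements stay quasi-invertible.

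Second, I would define the candidate inverse map $\psi_*: K^{\varepsilon,r,N}_*(\mathscr{K}(\ell^p)\otimes A) \to K^{\varepsilon,r,N}_*(A)$ on the nose. An element of $K_0$ is represented (after the finite-rank approximation above) by an idempotent-type element living in some $M_m(A)$, and the standard identification $M_m(A) \cong M_m(\mathbb{C})\otimes A \subset \mathscr{K}(\ell^p)\otimes A$ together with the fact that $K^{\varepsilon,r,N}_*$ is compatible with the stabilization inclusions $M_m(A)\hookrightarrow M_{m+1}(A)$ (built into the very definitions of $Idem^{\varepsilon,r,N}_\infty$ and $GL^{\varepsilon,r,N}_\infty$ via the corner embeddings $e \mapsto \mathrm{diag}(e,0)$) shows that the class in $K^{\varepsilon,r,N}_*(A)$ is well defined independently of $m$. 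I would then verify that $\psi_* \circ \varphi_* = \mathrm{id}$ and $\varphi_* \circ \psi_* = \mathrm{id}$ directly: both composites send a class to a diagonal corner embedding of itself, which is equal to the original class by the definition of the equivalence relation on $Idem^{\varepsilon,r,N}_\infty$ (homotopy of quasi-idempotents is stable under adding zero corners). Crucially, because matrix units have zero propagation, none of these identifications change $r$, and the controlled perturbation in the first step can be absorbed by working at the level of $K$-theory classes rather than representatives, so no relaxation of $(\varepsilon,r,N)$ is forced.

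The main obstacle I anticipate is the density/approximation step for $p = 1$: the remark explicitly warns that $\overline{M^1_\infty} \neq \mathscr{K}(\ell^1)$, which is exactly why the proposition restricts to $p \in (1,\infty)$. For $p \in (1,\infty)$ the equality $\mathscr{K}(\ell^p) = \overline{M^p_\infty}$ guarantees that finite matrices are dense, so the finite-rank approximation of quasi-idempotents and quasi-invertibles is available; for $p=1$ this fails and the corner inclusion need not induce a surjection. The secondary technical point is to confirm that the perturbation estimates are uniform enough that the homotopy connecting a quasi-idempotent to its finite-rank truncation stays within the class of $(\varepsilon,r,N)$-idempotents without enlarging the control parameters — this is where I would invoke the standard fact (implicit in Remark \ref{rmk 2.11}) that a norm-small perturbation of a quasi-idempotent is connected to it by a linear path of quasi-idempotents with only a negligible increase in control, and that the corner embedding $M_m(A) \hookrightarrow \mathscr{K}(\ell^p)\otimes A$ is $p$-completely isometric so that all norm bounds, and hence the parameters $\varepsilon$ and $N$, are preserved exactly.
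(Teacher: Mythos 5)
Your skeleton --- density of finite matrices in $\mathscr{K}(\ell^p)\otimes A$ for $p\in(1,\infty)$, approximation of quasi-idempotents and quasi-invertibles by elements of $M_m(A_r)$, and an explicit inverse on classes --- is the right one (note that the paper itself gives no proof; the proposition is imported from \cite{Banach-Chung}). But there is a genuine gap exactly at the step you describe as crucial. A generic norm-small perturbation does \emph{not} preserve the parameter $N$: the conditions $\Vert e^2-e\Vert<\varepsilon$ and $e\in M_k(\mathscr{K}(\ell^p)\otimes A_r)$ survive small perturbations (the first because it is a \emph{strict} inequality, the second because one can approximate inside the closed subspace $\mathscr{K}(\ell^p)\otimes A_r$), but $\max(\Vert e\Vert,\Vert 1-e\Vert)\leq N$ is a \emph{closed} condition, so a $\delta$-perturbation only produces an $(\varepsilon,r,N+C\delta)$-idempotent. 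No ``absorption at the level of $K$-theory classes'' can repair this, because the group $K^{\varepsilon,r,N}_*$ and its defining equivalence relation ($(4\varepsilon,r,4N)$-homotopy) are pinned to the exact value of $N$; with your argument one only obtains a controlled isomorphism with relaxed parameters, not the stated isomorphism $K^{\varepsilon,r,N}_{*}(A)\rightarrow K^{\varepsilon,r,N}_{*}(\mathscr{K}(\ell^{p})\otimes A)$. (Your appeal to Remark \ref{rmk 2.11} does not help: that remark concerns functional calculus, and ``a negligible increase in control'' is still an increase.)

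The missing device is to approximate by \emph{compression} rather than by arbitrary nearby finite matrices. Write $e=s+c$ with $s$ the scalar part and $c\in M_k(\mathscr{K}(\ell^p)\otimes A_r)$, and set $e_n=s+(1\otimes P_n)\,c\,(1\otimes P_n)$, where $P_n$ is the norm-one idempotent of $\ell^p$ onto the first $n$ coordinates. Since $P_n$ is contractive, commutes with $s$, and has zero propagation, $e_n$ is block diagonal with respect to the $\ell^p$-decomposition $\mathrm{ran}(P_n)\oplus\ker(P_n)$, with blocks compressed from $e$ and from $\pi(e)$; hence
$$\Vert e_n\Vert\leq\max\big(\Vert e\Vert,\Vert\pi(e)\Vert\big)\leq N,\qquad \Vert 1-e_n\Vert\leq N$$
\emph{exactly}, while $\Vert e_n^2-e_n\Vert\leq\Vert e^2-e\Vert+\Vert c\Vert\,\Vert(1\otimes(1-P_n))c\Vert<\varepsilon$ for $n$ large --- this is where strictness of $<\varepsilon$ and the density $\mathscr{K}(\ell^p)=\overline{M^p_\infty}$ (i.e.\ $p>1$) enter --- and the linear path from $e$ to $e_n$ stays inside $(\varepsilon,r,N)$-idempotents by convexity of the norm bounds. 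The same computation works for quasi-invertibles. A second omission: for injectivity (and for well-definedness of your $\psi_*$) you must pull back \emph{homotopies}, i.e.\ compress an entire path $(h_t)_{t\in[0,1]}$ of $(4\varepsilon,r,4N)$-idempotents over $\mathscr{K}(\ell^p)\otimes\widetilde{A}$; this requires the uniform estimate $\sup_t\Vert(1\otimes(1-P_n))c_t\Vert\rightarrow 0$ (compactness of $\{c_t\}$) together with $\sup_t\Vert h_t^2-h_t\Vert<4\varepsilon$ (attained on $[0,1]$). Your proposal treats only single representatives, so the injectivity half of the isomorphism is not addressed.
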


For $p=1$, we denote $\mathscr{K}(\ell^{1})$ by $\overline{\bigcup\limits_{n\in\mathbb{N}}M_{n}(\mathbb{C})}^{\Vert\cdot\Vert_{\ell^{1}}}$, then we still have the Morita equivalance.
\begin{proposition}\cite{Banach-Chung}\label{prop 2.17}
If $A$ is a filtered $L^{1}$ operator algebra,  then we have a group isomorphism 
$$K^{\varepsilon,r,N}_{*}(\mathscr{K}(\ell^{1})\otimes A)\cong K^{\varepsilon,r,N}_{*}(A).$$
\end{proposition}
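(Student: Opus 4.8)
The plan is to re-run the proof of Proposition \ref{prop2}, the only delicate point being that at $p=1$ the operator-theoretic compacts $\mathscr{K}(\ell^{1})$ strictly contain $\overline{M^{1}_{\infty}}$, so the identity $\mathscr{K}(\ell^{p})\otimes A=\overline{\bigcup_{n}M_{n}(A)}$ used for $p\in(1,\infty)$ is not automatic. This is exactly what the convention $\mathscr{K}(\ell^{1}):=\overline{\bigcup_{n}M_{n}(\mathbb{C})}^{\Vert\cdot\Vert_{\ell^{1}}}$ is designed to repair: since the listed spatial $L^{p}$ tensor-product properties (in particular $\Vert a\otimes b\Vert=\Vert a\Vert\Vert b\Vert$) are valid for all $p\in[1,\infty)$, this convention yields
$$\mathscr{K}(\ell^{1})\otimes A=\overline{\bigcup_{n}M_{n}(A)},$$
where $M_{n}(A)\hookrightarrow M_{n+1}(A)$ is the top-left corner inclusion. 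I would filter this algebra by $(\mathscr{K}(\ell^{1})\otimes A)_{r}:=\overline{\bigcup_{n}M_{n}(A_{r})}$ and observe that the corner embedding $A\to\mathscr{K}(\ell^{1})\otimes A$, $a\mapsto\mathrm{diag}(a,0,0,\dots)$, is a filtered, $p$-completely isometric homomorphism, hence induces the map on quantitative $K$-theory whose bijectivity is at issue.

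First I would record finite-stage stability. For each fixed $n$, the identifications $M_{m}(M_{n}(A))=M_{mn}(A)$ show that $Idem^{\varepsilon,r,N}_{\infty}(M_{n}(A))$ and $GL^{\varepsilon,r,N}_{\infty}(M_{n}(A))$ are cofinal subfamilies of $Idem^{\varepsilon,r,N}_{\infty}(A)$ and $GL^{\varepsilon,r,N}_{\infty}(A)$ respectively, and the homotopy relations defining $\sim$ (namely $(4\varepsilon,r,4N)$- and $(4\varepsilon,2r,4N)$-homotopy) match under these identifications. Consequently the corner inclusion induces an isomorphism
$$K^{\varepsilon,r,N}_{*}(A)\xrightarrow{\ \cong\ }K^{\varepsilon,r,N}_{*}(M_{n}(A))$$
with \emph{exactly} the same parameters $(\varepsilon,r,N)$ and compatibly with the connecting corner maps $M_{n}(A)\hookrightarrow M_{n+1}(A)$; in particular $\varinjlim_{n}K^{\varepsilon,r,N}_{*}(M_{n}(A))=K^{\varepsilon,r,N}_{*}(A)$.

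It then remains to prove the continuity statement
$$K^{\varepsilon,r,N}_{*}\big(\mathscr{K}(\ell^{1})\otimes A\big)=\varinjlim_{n}K^{\varepsilon,r,N}_{*}(M_{n}(A)),$$
which is the heart of the matter and supplies the inverse to $\iota_{*}$. For surjectivity onto the limit I would take an $(\varepsilon,r,N)$-idempotent (resp. invertible) $x$ in a matrix algebra over the unitization of $\mathscr{K}(\ell^{1})\otimes A$; by density of $\bigcup_{n}M_{n}(A)$ within the chosen filtration, a sufficiently close approximant $x'$ of the same propagation lying in a finite corner $M_{n}(\widetilde{A})$ still satisfies the defining quasi-idempotency (resp. quasi-invertibility) inequalities and is connected to $x$ by a short homotopy, so $[x]$ comes from a class at a finite stage. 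For injectivity I would apply the same density–perturbation argument to a witnessing homotopy or inverse pair: a relation realizing the vanishing of a finite-corner class in $\mathscr{K}(\ell^{1})\otimes A$ can itself be pushed into some $M_{n'}(A)$, so the class already vanishes at a finite stage.

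The main obstacle I anticipate is precisely this continuity step with the required bookkeeping of $(\varepsilon,r,N)$: the norm perturbations produced by density must be organized so that the passage to the inductive limit (where, for $n$ large, the relevant elements and homotopies live honestly inside $M_{n}(A)$) absorbs the slack rather than the control parameters. This is exactly the argument used in the proof of Proposition \ref{prop2}, and every ingredient it relies on — the isometry of the spatial $L^{p}$ tensor product, the density of finite corners, and the elementary perturbation estimates for quasi-idempotents and quasi-invertibles from \cite{Banach-Chung} — is equally available at $p=1$ under the present convention for $\mathscr{K}(\ell^{1})$. Hence the argument transplants verbatim and yields the asserted isomorphism.
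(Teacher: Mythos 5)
A preliminary remark: the paper does not actually prove this proposition --- it is quoted from \cite{Banach-Chung}, with $\mathscr{K}(\ell^{1})$ \emph{redefined} as $\overline{\bigcup_{n}M_{n}(\mathbb{C})}^{\Vert\cdot\Vert_{\ell^{1}}}$ precisely so that the $p\in(1,\infty)$ argument goes through --- so your proposal can only be compared with the standard proof of quantitative Morita equivalence. Your skeleton is the right one: with this convention $\mathscr{K}(\ell^{1})\otimes A=\overline{\bigcup_{n}M_{n}(A)}$, exact-parameter matrix stability holds via $M_{m}(M_{n}(A))=M_{mn}(A)$ (modulo the usual unitization bookkeeping), and the whole content is the continuity statement for the inductive limit.

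The genuine gap is in that continuity step, and it is exactly the point you flagged and then waved away. Being an $(\varepsilon,r,N)$-idempotent involves a \emph{strict} inequality $\Vert x^{2}-x\Vert<\varepsilon$, which does survive a small perturbation, and \emph{non-strict} norm bounds $\max(\Vert x\Vert,\Vert 1-x\Vert)\leq N$, which do not: a generic density approximant $x'$ with $\Vert x-x'\Vert<\delta$ satisfies only $\Vert x'\Vert\leq N+\delta$ (likewise for the bounds $\Vert u\Vert,\Vert v\Vert\leq N$ in the invertible case). So your surjectivity argument lands in $K^{\varepsilon,r,N+\delta}_{*}(M_{n}(A))$ rather than $K^{\varepsilon,r,N}_{*}(M_{n}(A))$, and your injectivity argument turns a $(4\varepsilon,r,4N)$-homotopy into a $(4\varepsilon,r,4N+\delta)$-homotopy, which no longer implements the equivalence relation defining the groups; since the proposition asserts an isomorphism with \emph{exactly} the same triple $(\varepsilon,r,N)$, this loss is fatal as written. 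Moreover your proposed mechanism --- letting ``the passage to the inductive limit absorb the slack'' --- cannot work: an element of the closure $\mathscr{K}(\ell^{1})\otimes A$ lies in no finite corner, so there is no stage at which the slack disappears; it must be absorbed by the strict $\varepsilon$-inequalities, not by the limit. The standard repair is to approximate by \emph{compressions} instead of arbitrary nearby elements: write $x=\pi(x)+k$ with $k\in M_{m}(\mathscr{K}(\ell^{1})\otimes A)$, let $P_{n}$ be the (contractive) coordinate projections $1_{m}\otimes p_{n}\otimes 1$, and set $x_{n}:=\pi(x)+P_{n}kP_{n}$. Then $x_{n}$ lies exactly in $M_{m}(M_{n}(A)^{+})$, has the same propagation as $x$, converges to $x$ in norm, and is block diagonal with respect to $P_{n}$, so by the $\mathcal{D}_{\infty}$ axiom $\Vert x_{n}\Vert\leq\max(\Vert x\Vert,\Vert\pi(x)\Vert)\leq N$ and $\Vert 1-x_{n}\Vert\leq N$; the strict inequality $<\varepsilon$ absorbs the error, and applying the same compression to a witnessing homotopy $(h_{t})$ (using compactness of $[0,1]$ to get uniform room $\sup_{t}\Vert h_{t}^{2}-h_{t}\Vert<4\varepsilon$) gives injectivity with the required $(4\varepsilon,r,4N)$-control. (Alternatively, one can keep generic approximants and renormalize --- replace $x'$ by $(1-s)x'+\frac{s}{2}1$, or rescale a quasi-inverse pair by $\frac{N}{N+\delta}$ --- again trading the norm excess for a small, absorbable increase of the defect.) With such a repair your outline becomes a complete proof; without it, the key step fails.
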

\begin{remark}
For any $r>0$, the $L^{p}$ operator tensor product $\mathscr{K}(\ell^{p})\otimes A$ has a filtration $(\mathscr{K}(\ell^{p})\otimes A_{r})_{r>0}$.
\end{remark}
If $\mathcal{A}=(A_{i})_{i\in\mathbb{N}}$ is any family of filtered $L^{p}$ operator algebras. For any $r>0$, we set 
$$\mathcal{A}^{\infty}_{c,r}=\prod\limits_{i\in\mathbb{N}}\mathscr{K}(\ell^{p})\otimes A_{i,r},$$
and we define the $L^{p}$ operator algebra $\mathcal{A}^{\infty}_{c}$ as the closure of $\bigcup\limits_{r>0}\mathcal{A}^{\infty}_{c,r}$ in $\prod\limits_{i\in\mathbb{N}}\mathscr{K}(\ell^{p})\otimes A_{i}$.
\begin{lemma}\label{lemma 2}
Let $\mathcal{A}=(A_{i})_{i\in\mathbb{N}}$ be a family of filtered $L^{p}$ operator algebras. There exist a control pair $(\lambda,h)$ independent of the family $\mathcal{A}$ and a $(\lambda,h)$-isomorphism 
$$\mathcal{F}=(F^{\varepsilon,r,N})_{0<\varepsilon<\frac{1}{20},r>0,N\geq 1}: \mathcal{K}_{*}(\mathcal{A}^{\infty}_{c})\rightarrow\prod\limits_{i\in\mathbb{N}}\mathcal{K}_{*}(A_{i}),$$
where 
$$F^{\varepsilon,r,N}:K^{\varepsilon,r,N}_{*}(\mathcal{A}^{\infty}_{c})\rightarrow\prod\limits_{i\in\mathbb{N}}K^{\varepsilon,r,N}_{*}(A_{i}) $$
is induced on the $j$-th factor by the projection $\prod\limits_{i\in\mathbb{N}}\mathscr{K}(\ell^{p})\otimes A_{i}\rightarrow \mathscr{K}(\ell^{p})\otimes A_{j}$  and up to the Morita equivalence restricted to $\mathcal{A}^{\infty}_{c}$.
\end{lemma}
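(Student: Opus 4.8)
The plan is to produce an explicit controlled inverse $\mathcal{G}$ for $\mathcal{F}$ by assembling component-wise representatives, and then to verify the two controlled homotopy relations $\mathcal{F}\circ\mathcal{G}\overset{(\lambda,h)}{\sim}\mathcal{I}d$ and $\mathcal{G}\circ\mathcal{F}\overset{(\lambda,h)}{\sim}\mathcal{I}d$ required by the definition of a $(\lambda,h)$-isomorphism, treating $K_0$ and $K_1$ simultaneously. First I would record that $\mathcal{F}$ is a controlled morphism: for each $j$ the coordinate projection $\prod_{i}\mathscr{K}(\ell^{p})\otimes A_{i}\to\mathscr{K}(\ell^{p})\otimes A_{j}$ is a $p$-completely contractive filtered homomorphism carrying $\mathcal{A}^{\infty}_{c,r}$ into $\mathscr{K}(\ell^{p})\otimes A_{j,r}$, and composing with the Morita isomorphism of Proposition \ref{prop2} (resp.\ Proposition \ref{prop 2.17} when $p=1$), which changes no parameters, yields $F^{\varepsilon,r,N}$; these are compatible with the relaxation-of-control maps by naturality, so $\mathcal{F}$ has control pair essentially $(1,1)$.

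To define $\mathcal{G}$, take $([x_{i}])_{i}\in\prod_{i}K^{\varepsilon,r,N}_{*}(A_{i})$. Transport each $[x_{i}]$ to $K^{\varepsilon,r,N}_{*}(\mathscr{K}(\ell^{p})\otimes A_{i})$ via Morita, and then use the filtered isomorphism $M_{n}(\mathscr{K}(\ell^{p})\otimes A_{i})\cong\mathscr{K}(\ell^{p})\otimes A_{i}$ that absorbs the matrix amplification into $\mathscr{K}(\ell^{p})$ while preserving propagation and the control $(\varepsilon,r,N)$; this lets me choose a \emph{single} representative $(\varepsilon,r,N)$-idempotent (resp.\ $(\varepsilon,r,N)$-invertible) $e_{i}\in\widetilde{\mathscr{K}(\ell^{p})\otimes A_{i}}$ with uniformly normalized scalar part. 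Because all $e_{i}$ share the parameters $(\varepsilon,r,N)$, the tuple $(e_{i})_{i}$ satisfies $\sup_{i}\Vert e_{i}\Vert\le N$ and $\sup_{i}\Vert e_{i}^{2}-e_{i}\Vert\le\varepsilon$ and lies in $\mathcal{A}^{\infty}_{c,r}$, so after an arbitrarily small relaxation of $\varepsilon$ it defines a class in $K^{\varepsilon,r,N}_{*}(\mathcal{A}^{\infty}_{c})$; I set $G^{\varepsilon,r,N}(([x_{i}])_{i})=[(e_{i})_{i}]$. By construction the $j$-th component of $\mathcal{F}[(e_{i})_{i}]$ is again $[x_{j}]$, so $\mathcal{F}\circ\mathcal{G}\overset{(\lambda,h)}{\sim}\mathcal{I}d$ holds essentially on the nose.

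The hard direction is $\mathcal{G}\circ\mathcal{F}\overset{(\lambda,h)}{\sim}\mathcal{I}d$, which amounts to controlled injectivity of $\mathcal{F}$. Given a class $[(a_{i})_{i}]$ over $\mathcal{A}^{\infty}_{c}$, its reassembly $\mathcal{G}\mathcal{F}[(a_{i})_{i}]=[(e_{i})_{i}]$ differs only in the choice of representative in each factor, so for every $i$ the components $a_{i}$ and $e_{i}$ are homotopic as $(\varepsilon,r,N)$-idempotents/invertibles in $\mathscr{K}(\ell^{p})\otimes A_{i}$. The decisive input is Proposition \ref{prop 2.19}: it upgrades each such homotopy to a Lipschitz homotopy of $(2\varepsilon,r,\tfrac{5}{2}N)$-idempotents, respectively $((4N^{2}+2)\varepsilon,2r,2(N+\varepsilon))$-invertibles, whose parameter change \emph{and} whose Lipschitz constant $\alpha_{N}$ (resp.\ $\beta_{N}$) depend only on $(\varepsilon,N)$, not on the algebra $A_{i}$ nor on the index $i$; the accompanying amplifications $\mathrm{diag}(\cdot,I_{k},0_{k})$ are reabsorbed into $\mathscr{K}(\ell^{p})$. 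Hence the component paths are equicontinuous in $i$, so $t\mapsto(e_{i}(t))_{i}$ is a genuine norm-continuous homotopy in a matrix algebra over $\widetilde{\mathcal{A}^{\infty}_{c}}$ with the \emph{uniform} relaxed parameters, witnessing $[(a_{i})_{i}]=[(e_{i})_{i}]$ after relaxation of control.

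The main obstacle is exactly this equicontinuous gluing of infinitely many homotopies: a naive coordinatewise homotopy need not be norm-continuous in $\prod_{i}\mathscr{K}(\ell^{p})\otimes A_{i}$, and what rescues it is that the Lipschitz constants and parameter shifts supplied by Proposition \ref{prop 2.19} are universal functions of $(\varepsilon,N)$, together with the consistent unitization and scalar-part bookkeeping across all factors. Since every parameter change invoked (Morita equivalence, the amplification absorption, and Proposition \ref{prop 2.19}) is controlled by such universal functions, the resulting control pair $(\lambda,h)$ is independent of the family $\mathcal{A}$, and $\mathcal{F}$ is a $(\lambda,h)$-isomorphism with inverse $\mathcal{G}$.
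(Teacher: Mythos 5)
Your proposal is correct and takes essentially the same route as the paper: the paper's own justification (the remark following Lemma \ref{lemma 2}, and the proof of the analogous product lemma for $L^{p}$ Roe algebras in Section 5) likewise rests on Proposition \ref{prop 2.19}, whose Lipschitz constant depends only on $N$ and not on the algebra or the index $i$, so that coordinatewise homotopies can be glued into a single homotopy over $\mathcal{A}^{\infty}_{c}$, while controlled surjectivity comes from reassembling uniform-parameter representatives exactly as you do. The scalar-part and matrix-size bookkeeping that you compress into a sentence is precisely the portion the paper also leaves implicit, deferring the non-unital case to Lemma 2.14 of Oyono-Oyono and Yu.
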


\begin{remark}
If $A_{i}$ is unital, then the above Lemma \ref{lemma 2} is a consequence of Proposition \ref{prop 2.19}. In this case, we let $\lambda_{N}=\frac{5}{2}N$, $h(\cdot,N)=2$. If $A_{i}$ is not unital for some $i$, the proof is similar to that of Lemma 2.14 in \cite{PAP-Oyono}.
\end{remark}
\section{Quantitative $L^p$ assembly maps}
In this section, we will introduce $L^{p}$ localization algebras, $L^{p}$ Roe algebras and reduced $L^{p}$ crossed products to define quantitative $L^{p}$ assembly maps, and establish the connection between the $L^p$ Baum-Connes conjecture and the quantitative $L^p$ Baum-Connes conjecture.

\subsection{$L^p$ Roe algebras and $L^p$ localization algebras}
In this section, we consider the case of finitely generated groups. Let $\Gamma$ be a finitely generated group with a length function $\ell:\Gamma\rightarrow\mathbb{R}^{+}$ such that 
\begin{itemize}
\item $\ell(\gamma)=0$ if and only if $\gamma=e$, where $e$ is the identity element of $\Gamma$; 

\item $\ell(\gamma\gamma')\leq\ell(\gamma)+\ell(\gamma')$ for all $\gamma,\gamma'\in\Gamma$;

\item $\ell(\gamma)=\ell(\gamma^{-1})$ for all $\gamma\in\Gamma$.
\end{itemize}
We assume that $\ell$ is the word length
$$\ell(\gamma)=\mathrm{inf}\{d \mid \gamma=\gamma_{1}\cdots\gamma_{d} \text{ with }\gamma_{1},\cdots,\gamma_{d}\in S\},$$
where $S$ is a finitely generated symmetric set. Let the ball of radius $r\in (0,\infty)$ around the identity of $\Gamma$ be 
$$B(e,r)=\{\gamma\in\Gamma\mid \ell(\gamma)\leq r\}.$$

\begin{definition}\cite{Higher-Yu}
Let $\Gamma$ be a finitely generated group and let $d\geq 0$. The spherical Rips complex of $\Gamma$ at scale $d$, denoted by $S_{d}(\Gamma)$, consists as a set of all formal sums
$$x=\sum\limits_{\gamma\in\Gamma}t_{\gamma}\gamma$$
such that each $t_{\gamma}\in [0,1]$ with $\sum\limits_{\gamma\in\Gamma}t_{\gamma}=1$ and such that the support of $x$ defined by $$\supp(x):=\{\gamma\in\Gamma\mid t_{\gamma}\neq 0\}$$ has diameter at most $d$.
\end{definition}

\begin{definition}\cite{Higher-Yu}
Let $\Gamma$ be a finitely generated group, and let $S_d(\Gamma)$ be the associated spherical Rips complex at scale $d$. A semi-simplicial path $\delta$ between points $x$ and $y$ in $S_d(\Gamma)$ consists of a sequence of the form
$$x=x_{0},y_{0},x_{1},y_{1},x_{2},y_{2},\cdots,x_{n},y_{n}=y,$$
where each of $x_{1},\cdots,x_{n}$ and each of $y_{0},\cdots,y_{n-1}$ are in $\Gamma$. The length of such a path is 
$$l(\delta):=\sum\limits_{i=0}^{n}d_{S_{d}}(x_{i},y_{i})+\sum\limits_{i=0}^{n-1}d_{\Gamma}(y_{i},x_{i+1}).$$
We define the semi-spherical distance on $S_d(\Gamma)$ by
$$d_{P_d}(x,y):=\mathrm{inf}\{l(\gamma) \mid \gamma \text{ is a semi-simplicial path between } x \text{ and }y\}$$
(note that a semi-simplicial path between two points always exists).

The Rips complex of $\Gamma$ is defined to be the space $P_d(\Gamma)$ equipped with the
metric $d_{P_d}$ above. 
\end{definition}
\begin{remark}
$P_{d}(\Gamma)$ is a locally finite simplicial complex and is locally compact when endowed with the simplicial topology, and it is endowed with a proper and cocompact action of $\Gamma$ by left translation.
\end{remark}
\begin{definition}
For $d\geq 0$, we define
$$Q_{d}:=\left\{\displaystyle\sum\limits_{\gamma\in\Gamma}t_{\gamma}\gamma\in P_{d}(\Gamma) \mid t_{\gamma}\in\mathbb{Q} \text{ for all } \gamma\in\Gamma \right\}.$$
Then $Q_{d}$ is a $\Gamma$-invariant, countable, dense subset of $P_{d}(\Gamma)$.
\end{definition}

\begin{definition}
Let $\Gamma$ be a discrete group, and let $A$ be an $L^p$ operator algebra. We say that $A$ is a $\Gamma$-$L^p$ operator algebra if $\alpha:\Gamma\rightarrow Aut(A)$ is an action by isometric automorphisms.
\end{definition}

\begin{definition}\cite{Phillips}
Let $(\Gamma, A, \alpha)$ be a $\Gamma$-$L^{p}$ operator algebra, and let $(X, \mathcal{B}, \mu)$ be a measure space. Then a covariant representation of $(\Gamma, A, \alpha)$ on $L^{p}(X, \mu)$ is a pair $(v, \pi)$ consisting of a representation $\gamma\mapsto v_{\gamma}$ from $\Gamma$ to the invertible operators on $L^p(X, \mu)$ such that $\gamma\mapsto v_{\gamma}\xi$ is continuous for all $\xi\in L^{p}(X, \mu)$, and a representation  $\pi: A \rightarrow B(L^p(X, \mu))$ such that the following covariance condition is satisfied: $\pi(\alpha_{\gamma}(a)) = v_{\gamma}\pi(a)v^{-1}_{\gamma}$
for all $\gamma\in\Gamma$ and $a\in A$. 
\end{definition}
We say that a covariant representation is isometric if $\pi$ is isometric.
\begin{definition}\label{def 3.4}
Let $A$ be a $\Gamma$-$L^{p}$ operator algebra, and let $E$ be a covariant represented $L^{p}$ space of $A$. An $L^{p}$-module is defined to be an $L^{p}$ space
$$L_{d}=\ell^{p}(Q_{d})\otimes E\otimes\ell^{p}\otimes\ell^{p}(\Gamma)\cong\ell^{p}(Q_{d}, E\otimes\ell^{p}\otimes\ell^{p}(\Gamma))$$
equipped with an isometric $\Gamma$-action given by 
$$u_{\gamma}\cdot(\delta_{x}\otimes e\otimes\eta\otimes\delta_{\gamma'})=\delta_{x\gamma^{-1}}\otimes\gamma e\otimes\eta\otimes\delta_{\gamma\gamma'},$$
for $x\in Q_{d}, e\in E, \eta\in\ell^{p}$, and $\gamma,\gamma'\in\Gamma$.
\end{definition}

\begin{remark}
For each $d\geq d_{0}\geq 0$, the canonical inclusion $i_{d_{0},d}: P_{d_{0}}(\Gamma)\hookrightarrow P_{d}(\Gamma)$ is a homeomorphism on its image and a coarse equivalence, and $Q_{d_{0}}\subset Q_{d}$.  Hence, we have an equivariant isometric inclusion $L_{d_{0}}\subset L_{d}$.
\end{remark}

\begin{remark}
Let $\mathscr{K}_{\Gamma}$ be the algebra of compact operators on $\ell^{p}\otimes\ell^{p}(\Gamma)\cong\ell^{p}(\mathbb{N}\times\Gamma)$ equipped with the $\Gamma$-action induced by the tensor product of the trivial action on $\ell^{p}$ and the left regular representation on $\ell^{p}(\Gamma)$. Also, we equip the algebra $A\otimes\mathscr{K}_{\Gamma}$ with the diagonal action of $\Gamma$. We say that the representation of $A\otimes\mathscr{K}_{\Gamma}$ on $E\otimes\ell^{p}\otimes\ell^{p}(\Gamma)$ is faithful and covariant if this representation is obtained by tensoring the natural action on $E$, trivial on $\ell^{p}$, and regular on $\ell^{p}(\Gamma)$. 
\end{remark}
Next, we will define equivariant $L^{p}$ Roe algebras and equivariant $L^{p}$ localization algebras.
\begin{definition}
Let $L_{d}$ be the $L^{p}$-module as in Definition \ref{def 3.4}, and let $T$ be a bounded linear operator on $L_{d}$, which we regard as a $(Q_{d}\times Q_{d})$-indexed matrix $T=(T_{y,z})$ with
$$T_{y,z}\in B(E\otimes\ell^{p}\otimes\ell^{p}(\Gamma))$$
for all $y,z\in Q_{d}$.
\begin{itemize}
\item $T$ is $\Gamma$-invariant if $u_{\gamma}Tu^{-1}_{\gamma}=T$ for all $\gamma\in\Gamma$, i.e. $T_{y,z}=\gamma\cdot T_{y\gamma,z\gamma}$ for all $\gamma\in\Gamma$.

\item The propagation of $T$ is defined to be 
$$prop(T):=\sup\{d_{P_{d}(\Gamma)}(y,z): T_{y,z}\neq 0\}.$$
\item $T$ is $E$-locally compact if $T_{y,z}\in A\otimes\mathcal{K}_{\Gamma}$ for all $y,z\in Q_{d}$, and if for each compact subset $G\subset P_{d}(\Gamma)$, the set
$$\{(y,z)\in (G\times G)\cap (Q_{d}\times Q_{d}): T_{y,z}\neq 0\}$$
is finite.
\end{itemize} 
\end{definition}

\begin{definition}
Let $L_{d}$ be the $L^{p}$-module, and let $\mathbb{C}[L_{d},A]^{\Gamma}$ denote the algebra of all $\Gamma$-invariant, $E$-locally compact operators on $L_{d}$ with finite propagation. The equivariant $L^{p}$ Roe algebra with coefficients in $A$, denoted $B^{p}(P_{d}(\Gamma),A)^{\Gamma}$, is defined to be closure of $\mathbb{C}[L_{d},A]^{\Gamma}$ in the operator norm on $B(L_{d})$.
\end{definition}

\begin{definition}
Let $L_{d}$ be the $L^{p}$-module, and let $\mathbb{C}_{L}[L_{d},A]^{\Gamma}$ denote the algebra of all bounded, uniformly continuous functions $f:[0,\infty)\rightarrow\mathbb{C}[L_{d},A]^{\Gamma}$ such that 
 $$prop(f(t))\rightarrow 0\text{ as }t\rightarrow\infty.$$
The equivariant $L^{p}$ localization algebra with coefficients in $A$, denoted by $B^{p}_{L}(P_{d}(\Gamma),A)^{\Gamma}$, is the completion of $\mathbb{C}_{L}[L_{d},A]^{\Gamma}$ with respect to the norm
$$\Vert f\Vert:=\sup\limits_{t\in [0,\infty)}\Vert f(t)\Vert_{B(L_{d})}.$$
\end{definition}

\subsection{The quantitative $L^p$ assembly maps}
For $p\in [1,\infty)$, to define a quantitative $L^p$ assembly map, we replace the equivariant $KK$-theory by the equivariant $K$-theory of $L^{p}$ localization algebras on the left-hand side of the map and replace the reduced $C^{*}$ crossed product by the reduced $L^{p}$ crossed product on the right-hand side of the map. In the setting of $L^{p}$ operator algebras, we need to study reduced $L^{p}$ crossed products and $L^{p}$ Baum-Connes assembly maps.

\begin{definition}
Let $A$ be a $\Gamma$-$L^{p}$ operator algebra, and let $E$ be an $L^{p}$ representation space of $A$. The reduced $L^{p}$ crossed product $A\rtimes_{\alpha,\lambda}\Gamma$ is the completion of $C_{c}(\Gamma,A,\alpha)$ in the operator norm on $B(E\otimes\ell^{p}(\Gamma))$.
\end{definition}

\begin{remark}
If $A$ is a matrix algebra $M_{n}(\mathbb{C})$ or a commutative algebra $C(X)$ for some compact space $X$, then the above definition is identical with Phillips's reduced $L^p$ crossed products [\cite{Phillips}, Definition 3.3] since it is independent of the representation of $A$ [\cite{WZ23}, Lemma 2.6].

\end{remark}

\begin{remark}
In the following, we will write $A\rtimes\Gamma$ for $A\rtimes_{\alpha,\lambda}\Gamma$. Note that the identification between $A\rtimes\Gamma$ and $B^{p}(P_{d}(\Gamma),A)^{\Gamma}$ is derived from the Morita equivalence between $C_{c}(\Gamma,A,\alpha)$ and $\mathbb{C}[L_{d},A]^{\Gamma}$. In addition, for $r>0$, the reduced $L^{p}$ crossed product $A\rtimes\Gamma$  has a filtration 
$$(A\rtimes\Gamma)_{r}:=\{f\in C_{c}(\Gamma,A) \text{ with } \mathrm{supp}(f)\in B(e,r)\}.$$
\end{remark}

\begin{definition}
Let $A$ be an $L^{p}$ operator algebra. For $N\geq 1$,
\begin{itemize}
\item an element $z\in A$ is called an $N$-idempotent if $z^{2}=z$ and $\Vert z\Vert\leq N$;
\item if $A$ is unital, an element $w\in A$ is called an $N$-invertible if $w$ is invertible and $\max\{\Vert w\Vert,\Vert w^{-1}\Vert\}\leq N$.
\end{itemize}
\end{definition}
Then we will define a variant of $K$-theory of $L^{p}$ operator algebras, which is labeled by the norm of the element and the norm of the homotopy path. 

Given an $L^{p}$ operator algebra $A$, for $N\geq 1$:
\begin{itemize}
\item we set $Idem^{N}(A):=\{z\in A \mid z$ is an $N$-idempotent\};
\item we let $Idem^{N}_{m}(A)=Idem^{N}(M_{m}(A))$ for each $m\in\mathbb{N}$;
\item we have inclusions $Idem^{N}_{m}(A)\hookrightarrow Idem^{N}_{m+1}(A)$, $ z\mapsto\begin{pmatrix}z&0\\0&0\end{pmatrix}$; 
\item we put $Idem^{N}_{\infty}(A):=\bigcup\limits_{m\in\mathbb{N}}Idem^{N}_{m}(A)$;
\item we define the equivalence relation $\sim$ on $Idem^{N}_{\infty}(A)$ as follows: $z\sim z'$ if $z$ and $z'$ are homotopic in $Idem^{4N}_{\infty}(A)$;
\item we denote by $[z]$ the equivalence class of $z\in Idem^{N}_{\infty}(A)$;
\item we equip $Idem^{N}_{\infty}(A)/\sim$ with the addition given by $[z]+[z']=[diag(z,z')]$;
\item $Idem^{N}_{\infty}(A)/\sim$ is an abelian semigroup with identity $[0]$.
\end{itemize}
If we wish to keep track of changes in the norm, we write $[z]_{N}$ instead of $[z]$.
\begin{definition}\label{def 3.13}
Let $A$ be an $L^{p}$ operator algebra. For $N\geq 1$, 
\begin{itemize}
\item if $A$ is unital, define $K^{N}_{0}(A)$ to be the Grothendieck group of $Idem^{N}_{\infty}(A)/\sim$;
\item if $A$ is non-unital, define $$K^{N}_{0}(A): =ker\big(\pi_{*}:K^{N}_{0}(A^{+})\rightarrow \mathbb{Z}\big).$$
\end{itemize}
\end{definition}

If $[z]-[z']\in K^{N}_{0}(A)$, where $z,z'\in M_{k}(\widetilde{A})$, then $[z]-[z']=[z'']-[I_{k}]$ in $K^{N}_{0}(A)$ for some $z''\in M_{2k}(\widetilde{A})$. Hence, each element of $K^{N}_{0}(A)$ can be written by $[z]-[I_{k}]$ with $\pi(z)=diag(I_{k},0)$.

Given a unital $L^{p}$ operator algebra $A$, for $N\geq 1$, 
\begin{itemize}
\item we set $GL^{N}(A):=\{w\in A\mid w$ is an $N$-invertible\};
\item we let $GL^{N}_{m}(A)=GL^{N}(M_{m}(A))$ for each $m\in\mathbb{N}$; 
\item we have inclusions $GL^{N}_{m}(A)\hookrightarrow GL^{N}_{m+1}(A)$, $w\mapsto\begin{pmatrix}w&0\\0&1\end{pmatrix}$; 
\item we put $GL^{N}_{\infty}(A):=\bigcup\limits_{m\in\mathbb{N}}GL^{N}_{m}(A)$;
\item we define the equivalence relation $\sim$ on $GL^{N}_{\infty}(A)$ as follows: $w\sim w'$ if $w$ and $w'$ are homotopic in $GL^{4N}_{\infty}(A)$;
\item we denote by $[w]$ the equivalence class of $w\in GL^{N}_{\infty}(A)$;
\item we equip $GL^{N}_{\infty}(A)/\sim$ with the addition defined by $[w]+[w']=[diag(w,w')]$;
\item $GL^{N}_{\infty}(A)/\sim$ is an abelian group with identity $[1]$.
\end{itemize}
If we wish to keep track of changes in the norm, we write $[w]_{N}$ instead of $[w]$.
\begin{definition}\label{def 3.15}
Let $A$ be an $L^{p}$ operator algebra. For $N\geq 1$,
\begin{itemize}
\item if $A$ is unital, define $K^{N}_{1}(A):= GL^{N}_{\infty}(A)/\sim$;
\item if $A$ is non-unital, define $K^{N}_{1}(A):= K^{N}_{1}(A^{+})$.
\end{itemize}
\end{definition}
In the odd case, each element of $K^{N}_{1}(A)$ can be written as $[w]$ with the form $\pi(w)=I_{k}$. 
Observe that $K^{N}_{*}(A)\subset K^{N'}_{*}(A)$ if $ N\leq N'$ and $K_{*}(A)=\lim\limits_{N\rightarrow\infty}K^{N}_{*}(A)$.

The evaluation-at-zero homomorphism 
$$ev_{0}: B^{p}_{L}\big(P_{d}(\Gamma),A\big)^{\Gamma}\rightarrow B^{p}\big(P_{d}(\Gamma),A\big)^{\Gamma},$$
induces a homomorphism on $K$-theory
$$ev_{*}: K_{*}\Big(B^{p}_{L}\big(P_{d}(\Gamma),A\big)^{\Gamma}\Big)\rightarrow  K_{*}\Big(B^{p}\big(P_{d}(\Gamma),A\big)^{\Gamma}\Big).$$

\begin{definition}\cite{DC-Chung}
Let $A$ be a  $\Gamma$-$L^p$ operator algebra. We define an $L^{p}$ assembly map
$$\mu^{d}_{A,*}:K_{*}\Big(B^{p}_{L}\big(P_{d}(\Gamma),A\big)^{\Gamma}\Big)\xrightarrow{ev_{*}}  K_{*}\Big(B^{p}\big(P_{d}(\Gamma),A\big)^{\Gamma}\Big)\cong K_{*}(A\rtimes\Gamma),$$
which gives rise to a homomorphism 
$$\mu_{A,*}:\lim\limits_{d>0} K_{*}\Big(B^{p}_{L}\big(P_{d}(\Gamma),A\big)^{\Gamma}\Big)\rightarrow K_{*}(A\rtimes\Gamma)$$
called the $L^p$ Baum-Connes assembly map. Moreover, the $L^{p}$ Baum-Connes conjecture for $\Gamma$ predicts that the $L^p$ Baum-Connes assembly map $\mu_{A,*}$ is an isomorphism.
\end{definition}

Subsequently, we will give a definition of a quantitative $L^{p}$ assembly map. Let us do some preparation. Consider the even case, the odd case being similar. Let $[z]$ be in $K^{N}_{0}\Big(B^{p}_{L}\big(P_{d}(\Gamma),A\big)^{\Gamma}\Big)$ with $z\in Idem^{N}_{m}\Big(B^{p}_{L}\big(P_{d}(\Gamma),A\big)^{\Gamma}\Big)$ for some $m$. Then for any $0<\varepsilon<\frac{1}{20}$, there exist $r'>0$, $\widetilde{z}\in Idem_{m}(\widetilde{\mathbb{C}_{L}[L_{d},A]^{\Gamma}_{r'}})$ such that $\Vert z-\widetilde{z}\Vert<\frac{\varepsilon}{6N(N+1)^{2}}$, then $\widetilde{z}$ is an $(\varepsilon,r',2N)$-idempotent in $M_{m}(\widetilde{\mathbb{C}_{L}[L_{d},A]^{\Gamma}})$ and $\iota_{0}([\widetilde{z}]_{\varepsilon,r',2N})=[z]$ [\cite{Banach-Chung}, Proposition 3.20]. Observe that the propagation of $\widetilde{z}$ tends to zero when $t$ goes to infinity. Hence, for $r>0$, we can choose $t\in [0,\infty)$ such that the $prop(\widetilde{z_{t}})\leq r$. Since $\Vert z_{t}-\widetilde{z_{t}}\Vert\leq \Vert z-\widetilde{z}\Vert<\frac{\varepsilon}{6N(N+1)^{2}}$, we get that $\widetilde{z_{t}}$ is an $(\varepsilon,r,2N)$-idempotent in $M_{m}(\widetilde{\mathbb{C}[L_{d},A]^{\Gamma}})$ and $\iota_{0}([\widetilde{z_{t}}]_{\varepsilon,r,2N})=[z_{t}]$ by applying Proposition 3.20 in \cite{Banach-Chung}.

\begin{definition}
Let $A$ be a $\Gamma$-$L^p$ operator algebra. For $0<\varepsilon<\frac{1}{20}$, $r>0$, $N\geq1$ and $d>0$, we define a quantitative $L^p$ assembly map

$$\mu^{\varepsilon,r,N,d}_{A,*}: K^{N}_{*}\Big(B^{p}_{L}\big(P_{d}(\Gamma),A\big)^{\Gamma}\Big)\rightarrow K^{\varepsilon,r,9N}_{*}\Big(B^{p}\big(P_{d}(\Gamma),A\big)^{\Gamma}\Big)\cong K^{\varepsilon,r,9N}_{*}(A\rtimes\Gamma)$$
$$\quad [z] \mapsto [\widetilde{z_{t}}]_{\varepsilon,r,9N}\qquad\qquad$$
for some $t\in [0,\infty)$ satisfying 
$$\iota_{*}([\widetilde{z_{t}}]_{\varepsilon,r,9N})=[z_{t}]\text{ in }K_{*}(A\rtimes\Gamma).$$
\end{definition}
\begin{remark}
Put $B=B^{p}_{L}\big(P_{d}(\Gamma),A\big)^{\Gamma}$.
In the even case, If $[z]=[z']\in K^{N}_{0}(B)$, then $[z]+[g]=[z']+[g]$ in $Idem^{N}_{\infty}(\widetilde{B})/\sim$ for some $g$ in $Idem^{N}_{k}(\widetilde{B})$, thus $diag(z,g)$ and $diag(z',g)$ are homotopic in $Idem^{4N}_{\infty}(\widetilde{B})$. Let $(Z^{s})_{s\in [0,1]}$ be a homotopy of $4N$-idempotents between $diag(z,g)$ and $diag(z',g)$, and let $0=s_{0}<s_{1}<\cdots<s_{k}=1$ be such that 
$$\Vert Z^{s_{i}}-Z^{s_{i-1}}\Vert<\frac{\varepsilon}{6(10N+1)}, \text{ for } i=1, \cdots, k.$$
For each $i$, there exist $r_{i}>0$, $\widetilde{Z^{s_{i}}}\in M_{m}(\widetilde{B_{r_{i}}})$ such that
 $\Vert Z^{s_{i}}-\widetilde{Z^{s_{i}}}\Vert<\frac{\varepsilon}{30N(5N+1)^{2}}$.
Then $\widetilde{Z^{s_{i}}}$ is an $(\varepsilon,r_{i},5N)$-idempotent in $ M_{m}(\widetilde{B})$ and $\iota_{0}([\widetilde{Z^{s_{i}}}])=[Z^{s_{i}}]$ in $K_{0}(B)$ [\cite{Banach-Chung}, Proposition 3.20]. For $r>0$, by the definition of the localization algebra, we can choose an appropriate $t_{i}$ in $[0,\infty)$ such that $Z^{s_{i}}_{t_{i}}$ is in $M_{m}(\widetilde{A\rtimes\Gamma})$ and the propagation of $Z^{s_{i}}_{t_{i}}$ is no more than $r$. Let $t=\max\limits_{0\leq i\leq k}t_{i}$, and define $\widetilde{Z^{l}_{t}}=\frac{l-s_{i-1}}{s_{i}-s_{i-1}}\widetilde{Z^{s_{i}}_{t}}+\frac{s_{i}-l}{s_{i}-s_{i-1}}\widetilde{Z^{s_{i-1}}_{t}}$ for $l\in [s_{i-1},s_{i}]$. Then $\widetilde{Z^{l}_{t}}$ is a homotopy of $(\varepsilon,r,5N)$-idempotent in  $M_{m}(\widetilde{A\rtimes\Gamma})$ between $\widetilde{Z^{0}_{t}}$ and $\widetilde{Z^{1}_{t}}$. The odd case is similar: we can also construct a homotopy of $(\varepsilon,r,9N)$-invertible in  $M_{m}(\widetilde{A\rtimes\Gamma})$. Note that $\max\{5N,9N\}=9N$. Hence, for any $[z]\in K^{N}_{*}(B)$, there exists a unique element $[\widetilde{z_{t}}]_{\varepsilon,r,9N}\in K^{\varepsilon,r,9N}_{*}(A\rtimes\Gamma)$ such that $\iota_{*}([\widetilde{z_{t}}]_{\varepsilon,r,9N})=[z_{t}]$ for some $t\in [0,\infty)$ in $K_{*}(A\rtimes\Gamma)$. Therefore, the quantitative $L^{p}$ assembly map $\mu^{\varepsilon,r,N,d}_{A,*}$ is well-defined. 
\end{remark}
Moreover, the quantitative $L^{p}$ assembly maps are compatible with the usual ones, namely, if $[z]$ is an element of $K^{N}_{*}\Big(B^{p}_{L}\big(P_{d}(\Gamma),A\big)^{\Gamma}\Big)$, then
\begin{equation}\label{eq 1}
\mu^{d}_{A,*}([z])=\iota_{*}\circ\mu^{\varepsilon,r,N,d}_{A,*}([z]_{N})\text{ in } K_{*}(A\rtimes\Gamma).
\end{equation}
 For any positive numbers $d, d'$ such that $d\leq d'$, we denote by 
$$i^{N}_{d,d',*}: K^{N}_{*}\Big(B^{p}_{L}\big(P_{d}(\Gamma),A\big)^{\Gamma}\Big)\rightarrow K^{N}_{*}\Big(B^{p}_{L}\big(P_{d'}(\Gamma),A\big)^{\Gamma}\Big).$$
the homomorphism induced by the canonical inclusion $i_{d,d'}: P_{d}(\Gamma)\hookrightarrow P_{d'}(\Gamma)$, then  
$$\mu^{\varepsilon,r,N,d}_{A,*}=\mu^{\varepsilon,r,N,d'}_{A,*}\circ i^{N}_{d,d',*},$$
which implies that $\mu^{d}_{A,*}=\mu^{d'}_{A,*}\circ i_{d,d',*}$.
Moreover, for $0<\varepsilon\leq\varepsilon'<\frac{1}{20}$, $0<r\leq r'$ and $1\leq N\leq N'$, we have
\begin{equation}\label{eq 2}
\iota^{\varepsilon',r',9N'}_{*}\circ\mu^{\varepsilon,r,N,d}_{A,*}=\mu^{\varepsilon',r',N',d}_{A,*}.
\end{equation}
For $N\geq 1$, the evaluation-at-zero homomorphism 
$$ev_{0}: B^{p}_{L}\big(P_{d}(\Gamma),A\big)^{\Gamma}\rightarrow B^{p}\big(P_{d}(\Gamma),A\big)^{\Gamma},$$
induces a homomorphism on a variant of $K$-theory
$$ev^{N}_{*}: K^{N}_{*}\Big(B^{p}_{L}\big(P_{d}(\Gamma),A\big)^{\Gamma}\Big)\rightarrow  K^{N}_{*}\Big(B^{p}\big(P_{d}(\Gamma),A\big)^{\Gamma}\Big).$$

\begin{definition}
Let $A$ be a $\Gamma$-$L^{p}$ operator algebra. For $N\geq 1$, we define an $N$-$L^{p}$ assembly map
$$\mu^{N,d}_{A,*}: K^{N}_{*}\Big(B^{p}_{L}(P_{d}\big(\Gamma),A\big)^{\Gamma}\Big)\xrightarrow {ev^{N}_{*}} K^{N}_{*}\Big(B^{p}\big(P_{d}(\Gamma),A\big)^{\Gamma}\Big)\cong K^{N}_{*}(A\rtimes\Gamma),$$
which gives rise to a homomorphism 
$$\mu^{N}_{A,*}: \lim\limits_{d>0}K^{N}_{*}\Big(B^{p}_{L}(P_{d}\big(\Gamma),A\big)^{\Gamma}\Big)\rightarrow K^{N}_{*}(A\rtimes\Gamma)$$
 called the $N$-$L^{p}$ Baum-Connes assembly map.
\end{definition}

\begin{remark}
When $A$ is a $C^{*}$-algebra, the $N$-$L^{p}$ Baum-Connes assembly map is indeed the Baum-Connes assembly map. In fact, in the context of $C^{*}$-algebras in \cite{Blackadar}, idempotents are homotopic to projections, and invertibles are homotopic to unitaries. And the norm of the projection or the unitary is no more than 1. 
\end{remark}

\begin{definition}
	Let $A$ and $B$ be $L^{p}$ operator algebras, and let $\omega: [1,\infty)\rightarrow [1,\infty)$ be a non-decreasing function. We say that $F^{N}: K^{N}_{i}(A)\rightarrow K^{N}_{j}(B)$ is $\omega$-surjective if for any integer $N\geq 1$ and $[y]\in K^{N}_{j}(B)$, there exists $[x]\in K^{\omega(N)}_{i}(A)$ such that
$$F^{\omega(N)}([x])=[y]\text{ in }K^{\omega(N)\cdot N}_{j}(B).$$
\end{definition}
\begin{remark}
By the proof of Theorem 5.17 in \cite{DC-Chung}, we know that if $\Gamma\curvearrowright X$ has finite dynamical complexity, then the $N$-$L^{p}$ Baum-Connes assembly map for $\Gamma\curvearrowright X$ is $\omega$-surjective, and the function $\omega$ depends on the dynamic asymptotic dimension $m$ and Mayer-Vietoris control pair $(\lambda,h)$. In addition, we may use the term controlled-surjective when we do not want to emphasize the function $\omega$.
\end{remark}

\begin{definition}
Let $A$ be a filtered $L^{p}$ operator algebra. For $0<\varepsilon<\frac{1}{20}$, $r>0$, and $N\geq 1$, we have a canonical group homomorphism
$$\iota^{N}_{*}: K^{\varepsilon,r,N}_{*}(A)\rightarrow K^{4N}_{*}(A),\quad[z]_{\varepsilon,r,N}\mapsto [\kappa_{*}(z)]_{4N}.$$
\end{definition}
Furthermore, the quantitative $L^{p}$ assembly maps are compatible with the $N$-$L^{p}$ assembly maps, namely, if $[z]$ is the element of $K^{N}_{*}\Big(B^{p}_{L}\big(P_{d}(\Gamma),A\big)^{\Gamma}\Big)$, then 
$$\mu^{36N,d}_{A,*}([z]_{36N})=\iota^{9N}_{*}\circ\mu^{\varepsilon,r,N,d}_{A,*}([z]_{N})\text{ in } K^{36N}_{*}(A\rtimes\Gamma).$$
\begin{proposition}\label{prop 3.24}
\item There exists a polynomial $\rho\geq 1$ with positive coefficients such that for any filtered $L^{p}$ operator algebra $A$, any $\varepsilon\in (0,\frac{1}{20\rho(N)})$, any $r>0$ and any $N\geq 1$, the following holds:
Let $[x], [x']$ be in $K^{\varepsilon,r,N}_{*}(A)$ such that $\iota^{N}_{*}([x])=\iota^{N}_{*}([x'])$ in $K^{4N}_{*}(A)$, there exists $r'\geq r$ such that 
$$[x]_{\rho(N)\varepsilon, r',33N}=[x']_{\rho(N)\varepsilon, r',33N}\text{ in }K^{\rho(N)\varepsilon, r',33N}_{*}(A).$$
\end{proposition}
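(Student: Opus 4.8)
The plan is to treat Proposition \ref{prop 3.24} as the \emph{norm-controlled refinement} of Proposition \ref{prop 2.11} (that is, Proposition 3.21 of \cite{Banach-Chung}): I would run exactly the same argument, but carry the norm index along explicitly instead of discarding it. The one new input is that $[x]$ and $[x']$ are assumed equal already in $K^{4N}_{*}(A)$ and not merely in $K_{*}(A)$. By Definition \ref{def 3.13} (resp. Definition \ref{def 3.15}) the equivalence relation defining $K^{4N}_{*}(A)$ is homotopy inside $Idem^{16N}_{\infty}$ (resp. $GL^{16N}_{\infty}$), the factor $4$ in the relation applied to the parameter $4N$ producing the bound $16N$. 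Thus the hypothesis $\iota^{N}_{*}([x])=\iota^{N}_{*}([x'])$ supplies a homotopy through \emph{genuine} idempotents (resp. invertibles) of norm at most $16N$. It is precisely this uniform norm bound -- unavailable in the passage through $K_{*}(A)$ used for Proposition \ref{prop 2.11}, cf. Remark \ref{rmk 2.13} -- that replaces the unspecified $N'$ by the explicit value $33N$. Accordingly I take $\rho$ to be the polynomial of Remark \ref{rmk 2.13}, so that the control relaxes to $\rho(N)\varepsilon$ exactly as there.

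In the even case I would first normalise, using the discussion before Definition \ref{def 3.13}, to $[x]=[e]-[I_k]$ and $[x']=[e']-[I_k]$ with $e,e'$ being $(\varepsilon,r,N)$-idempotents. Applying $\iota^{N}_{0}$ and invoking the hypothesis yields an idempotent $g$ with $\Vert g\Vert\le 4N$ together with a norm-continuous path $(P_s)_{s\in[0,1]}$ in $Idem^{16N}_{\infty}(\widetilde{A})$ joining $\mathrm{diag}(\kappa_0(e),g)$ to $\mathrm{diag}(\kappa_0(e'),g)$. This genuine homotopy is then converted into a quantitative one between suitable stabilisations of $e$ and $e'$ by the same mechanism as in Chung's proof: subdivide $[0,1]$ so that consecutive samples are close in norm, replace each sample (and the infinite-propagation data $\kappa_0(e)$, $g$, $\kappa_0(e')$) by a finite-propagation element using density of $\bigcup_{r>0}A_r$, and interpolate linearly. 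The defect of every interpolant relaxes to $\rho(N)\varepsilon$ precisely as in Proposition \ref{prop 2.11}, and $r'$ is the largest propagation used. The genuinely new estimate is the norm index: each interpolant lies within a controlled distance of an idempotent $P$ with $\Vert P\Vert\le 16N$, so passing to the associated symmetry and using $\Vert 1-2P\Vert\le 1+2\Vert P\Vert$ gives $\max(\Vert\cdot\Vert,\Vert 1-\cdot\Vert)\le 1+32N\le 33N$ for $N\ge 1$. This is the source of the clean constant, and the resulting homotopy witnesses $[x]_{\rho(N)\varepsilon,r',33N}=[x']_{\rho(N)\varepsilon,r',33N}$.

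The odd case is parallel and slightly simpler, since $\kappa_1(u)=u$ means no functional-calculus perturbation is needed at the endpoints; one only checks $\Vert u^{-1}\Vert<2N\le 4N$ (using $\varepsilon<\tfrac1{20}$) so that $u$ genuinely represents a class in $K^{4N}_{1}$. Here $\rho\equiv 1$ on the odd part, consistent with Remark \ref{rmk 2.13}. The hypothesis now gives, after stabilising with identities, a norm-continuous path $(W_s)$ in $GL^{16N}_{\infty}(\widetilde{A})$ from $u$ to $u'$ with $\max(\Vert W_s\Vert,\Vert W_s^{-1}\Vert)\le 16N$. Subdividing and approximating each $W_{s_i}$ by a finite-propagation element, whose quasi-inverse is again controlled by $16N$, produces a homotopy of $(\rho(N)\varepsilon,r',33N)$-invertibles joining $u$ and $u'$, as required.

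The hard part will be the quantisation step, where all three parameters must be controlled at once. Both $\kappa_0(e)$ and the abstract homotopy $(P_s)$ (resp. $(W_s)$) are genuine elements of $A$ carrying \emph{no} finite-propagation bound, so the delicate point is to replace them by finite-propagation approximants while simultaneously (i) keeping the idempotent/invertible defect below the prescribed control $\rho(N)\varepsilon$, (ii) producing a single uniform propagation $r'$, and (iii) -- crucially -- not inflating the norm index past $33N$. Points (i) and (ii) are standard perturbation-and-density arguments inherited verbatim from Proposition \ref{prop 2.11}; the real work is (iii), namely threading the $16N$-bound coming from $K^{4N}_{*}$ through the symmetry estimate so as to pin the norm at the clean value $33N$. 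I expect this bookkeeping, rather than any conceptual difficulty, to be the crux.
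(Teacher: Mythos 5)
Your proposal is correct and follows essentially the same route as the paper's proof: the hypothesis is unpacked into a genuine homotopy of $16N$-idempotents (resp.\ $16N$-invertibles), which is then approximated by finite-propagation elements --- the paper does this in one stroke inside $C([0,1],M_n(\widetilde{A}))$, which is the same mechanism as your subdivide-and-interpolate scheme --- while tracking defect, propagation and norm simultaneously, with the endpoint comparison to $x,x'$ via $\kappa_0$ producing the factor $\rho(N)$ of Remark \ref{rmk 2.13} exactly as you describe. The only cosmetic divergence is the norm bookkeeping: the paper bounds the even-case path directly by $17N$ (your symmetry detour giving $1+32N$ is unnecessary, though it stays under the stated constant), the value $33N$ really coming from the odd case, where approximating both the invertible path and its inverse contributes $17N+16N$.
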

\begin{proof}
(i) In the even case, let $(g_{t})_{t\in [0,1]}$ be a homotopy of $16N$-idempotents in $M_{n}(\widetilde{A})$ between $\kappa_{0}(x)$ and $\kappa_{0}(x')$. Then $G:=(g_{t})$ is a $16N$-idempotent in $C([0,1], M_{n}(\widetilde{A}))$. There exist $r'\geq r$ and $H:=(h_{t})\in C([0,1], M_{n}(\widetilde{A_{r'}}))$ such that $\Vert H-G\Vert<\frac{\varepsilon}{68N}$. In particular, we have $\Vert h_{0}-\kappa_{0}(x)\Vert<\frac{\varepsilon}{68N}$ and $\Vert h_{1}-\kappa_{0}(x')\Vert<\frac{\varepsilon}{68N}$. Then $h_{t}$ is an $(\varepsilon,r',17N)$-idempotent in $M_{n}(\widetilde{A})$ for each $t\in [0,1]$. Also
$$
\begin{aligned}
\Vert h_{0}-x\Vert<\Vert h_{0}-\kappa_{0}(x)\Vert+\Vert\kappa_{0}(x)-x\Vert\\
<\frac{\varepsilon}{68N}+\frac{2(N+1)\varepsilon}{(1-\sqrt{\varepsilon})(1-2\sqrt{\varepsilon})}\\
<6(N+1)\varepsilon\qquad\qquad\qquad\qquad
\end{aligned}
$$
and similarly $\Vert h_{1}-x'\Vert<6(N+1)\varepsilon$. Then $h_{0}$ and $x$ are $(\varepsilon',r',17N)$-homotopic, where $\varepsilon'=\varepsilon+\frac{1}{4}(6N+6)^{2}\varepsilon^{2}$, and similarly for $h_{1}$ and $x'$. Hence $[x]_{\varepsilon',r',17N}=[x']_{\varepsilon',r',17N}$.

(ii) In the odd case, let $(f_{t})_{t\in [0,1]}$ be a homotopy of $16N$-invertibles in $M_{n}(\widetilde{A})$ between $x$ and $x'$. $F=(f_{t})$ can be regarded as an invertible element in $C([0,1],M_{n}(\widetilde{A}))$. Then there exist $r'\geq r$ and $W\in C([0,1],M_{n}(\widetilde{A_{r'}}))$ such that 
$$\Vert W-F\Vert<\frac{1}{33N}(\varepsilon-\max\{\Vert xy-1\Vert,\Vert yx-1\Vert, \Vert x'y'-1\Vert, \Vert y'x'-1\Vert\}),$$
where $y$ is an $(\varepsilon,r,N)$-inverse for $x$, and $y'$ is an $(\varepsilon,r,N)$-inverse for $x'$. Then $W$ is an $(\varepsilon,r',33N)$-invertible in $C([0,1],M_{n}(\widetilde{A}))$, and we have a homotopy of $(\varepsilon,r',33N)$-invertibles $x\sim W_{0}\sim W_{1}\sim x'$.
\end{proof}

\subsection{Quantitative statements}
Oyono-Oyono established the connection between the Baum-Connes conjecture and the quantitative Baum-Connes conjecture in the article \cite{Oyono-2015}. In parallel, we will give the connection between the $L^p$ Baum-Connes conjecture and the quantitative $L^p$ Baum-Connes conjecture.

For a $\Gamma$-$L^{p}$ operator algebra $A$ and  positive numbers $d,d',r,r',\varepsilon,\varepsilon', N,N'$ with $d\leq d'$, $\varepsilon\leq\varepsilon'<\frac{1}{20}$, $r\leq r'$ and $1\leq N\leq N'$, let us consider the following statements:
\begin{itemize}
\item $QI_{A,*}(d,d',\varepsilon,r,N)$: for every $[x]\in K^{N}_{*}\Big(B^{p}_{L}\big(P_{d}(\Gamma),A\big)^{\Gamma}\Big)$, then 
$$\mu^{\varepsilon,r,N,d}_{A,*}([x])=0\text{ in }K^{\varepsilon,r,9N}_{*}(A\rtimes\Gamma)$$ 
implies that $i_{d,d',*}([x])=0$ in $K_{*}\Big(B^{p}_{L}\big(P_{d'}(\Gamma),A\big)^{\Gamma}\Big)$.

\item $QS_{A,*}(d,\varepsilon,\varepsilon',r,r',N,N')$: for every $[y]\in K^{\varepsilon,r,N}_{*}(A\rtimes\Gamma)$, there exists an element $[x]\in K^{N'}_{*}(B^{p}_{L}(P_{d}(\Gamma),A)^{\Gamma})$ such that 
$$\mu^{\varepsilon',r',N',d}_{A,*}([x])=\iota^{\varepsilon',r',9N'}_{*}([y])\text{ in }K^{\varepsilon',r',9N'}_{*}(A\rtimes\Gamma).$$
\end{itemize}

Using equation \ref{eq 1} and Proposition \ref{prop 2.11}, we get the following proposition:
\begin{proposition}
Let $\Gamma$ be a finitely generated group, and let $A$ be a $\Gamma$-$L^{p}$ operator algebra. For a positive number $\varepsilon$ with $\varepsilon<\frac{1}{20}$:

$\mathrm(i)$ Assume that for any $r>0$, $N\geq 1$ and $d>0$, there exists $d'\geq d$ such that $QI_{A,*}(d,d',\varepsilon,r,N)$ is satisfied. Then $\mu_{A,*}$ is injective.

$\mathrm(ii)$ Assume that for any $r>0$ and $N\geq 1$, there exist positive numbers $\varepsilon'$, $d$, $r'$ and $N'$ with $\varepsilon\leq\varepsilon'<\frac{1}{20}$, $r\leq r'$, $N\leq N'$ and $d>0$ such that $QS_{A,*}(d,\varepsilon,\varepsilon',r,r',N,N')$ is true. Then $\mu_{A,*}$ is surjective.
\end{proposition}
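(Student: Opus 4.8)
The plan is to establish the two implications separately, using the quantitative-to-qualitative comparison machinery already set up in the excerpt. The core idea is that the $L^p$ Baum-Connes assembly map $\mu_{A,*}$ is obtained as an inductive limit over $d$ of the maps $\mu^d_{A,*}$, and each $\mu^d_{A,*}$ factors through the quantitative assembly map via equation \eqref{eq 1}, namely $\mu^d_{A,*}([z]) = \iota_* \circ \mu^{\varepsilon,r,N,d}_{A,*}([z]_N)$. So injectivity and surjectivity of the limit map can be reduced, via the approximation Proposition \ref{prop 2.12}, to the quantitative statements $QI$ and $QS$.

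\emph{Injectivity (part (i)).} First I would take a class in $\lim_{d>0} K_*\big(B^p_L(P_d(\Gamma),A)^\Gamma\big)$ lying in the kernel of $\mu_{A,*}$; it is represented by some $[z] \in K_*\big(B^p_L(P_d(\Gamma),A)^\Gamma\big)$ for a fixed $d$, and since $K_* = \lim_{N\to\infty} K^N_*$, I may assume $[z] \in K^N_*$ for some $N \geq 1$. By equation \eqref{eq 1}, $\mu^d_{A,*}([z]) = \iota_* \circ \mu^{\varepsilon,r,N,d}_{A,*}([z]_N) = 0$ in $K_*(A\rtimes\Gamma)$. Now I invoke Proposition \ref{prop 2.12}(ii): since $\iota_*$ of the quantitative class $\mu^{\varepsilon,r,N,d}_{A,*}([z]_N)$ vanishes in ordinary $K$-theory, after relaxing control there exist $r' \geq r$ and $N'$ such that the relaxed quantitative class is already zero, i.e. $\mu^{\varepsilon,r,N,d}_{A,*}([z]_N) = 0$ (after relaxing $\varepsilon,r$). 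This is exactly the hypothesis of $QI_{A,*}(d,d',\varepsilon,r,N)$, so applying the assumed $QI$ statement yields $i_{d,d',*}([z]) = 0$ for some $d' \geq d$. Since the original class was the image of $[z]$ in the inductive limit, it is therefore zero, proving injectivity.

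\emph{Surjectivity (part (ii)).} I would start with an arbitrary class $[y] \in K_*(A\rtimes\Gamma)$. By Proposition \ref{prop 2.12}(i), for my fixed $\varepsilon$ there exist $r > 0$, $N \geq 1$, and a quantitative class $[\tilde y] \in K^{\varepsilon,r,N}_*(A\rtimes\Gamma)$ with $\iota_*([\tilde y]) = [y]$. The hypothesis $QS_{A,*}(d,\varepsilon,\varepsilon',r,r',N,N')$ then produces $[x] \in K^{N'}_*\big(B^p_L(P_d(\Gamma),A)^\Gamma\big)$ with $\mu^{\varepsilon',r',N',d}_{A,*}([x]) = \iota^{\varepsilon',r',9N'}_*([\tilde y])$ at the quantitative level. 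Applying $\iota_*$ to pass to ordinary $K$-theory and using compatibility \eqref{eq 1} once more, I get $\mu^d_{A,*}([x]) = \iota_*\big(\iota^{\varepsilon',r',9N'}_*([\tilde y])\big) = \iota_*([\tilde y]) = [y]$, where I use that composing relaxation-of-control maps with $\iota_*$ recovers the single map into $K_*$. Hence $[y]$ lies in the image of $\mu^d_{A,*}$, which maps into $\lim_{d>0}$, so $\mu_{A,*}$ is surjective.

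\textbf{The main obstacle} I expect is the bookkeeping of control parameters when passing between the quantitative level and ordinary $K$-theory — in particular, verifying that the class $\mu^{\varepsilon,r,N,d}_{A,*}([z]_N)$ is genuinely well-defined (independent of the auxiliary choice of $t$ and the approximant $\tilde z$) and that applying Proposition \ref{prop 2.12}(ii) does not silently change the superscript $9N$ in a way that breaks the match with the $QI$ hypothesis. One must be careful that the relaxation needed for Proposition \ref{prop 2.12} stays compatible with the specific exponents ($9N$, $9N'$) appearing in the definitions of $\mu^{\varepsilon,r,N,d}_{A,*}$ and of the statements $QI$, $QS$, and that the inductive-limit structure over $d$ correctly absorbs the passage $d \to d'$. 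Once the indices are tracked honestly, both directions are formal consequences of \eqref{eq 1} together with Proposition \ref{prop 2.12}.
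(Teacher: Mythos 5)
Your part (ii) is correct and is essentially the argument the paper intends (the paper itself only says the proposition follows from equation \eqref{eq 1} and Proposition \ref{prop 2.11}): lift $[y]$ to a quantitative class by Proposition \ref{prop 2.12}(i), feed it to the $QS$ hypothesis, and push back to $K_*$ using \eqref{eq 1} together with the fact that relaxation of control followed by $\iota_*$ is again $\iota_*$.

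Part (i), however, has a genuine gap, and it is precisely the point you flagged as ``the main obstacle'' without resolving it. After applying Proposition \ref{prop 2.12}(ii) to the class $\mu^{\varepsilon,r,N,d}_{A,*}([z]_N)\in K^{\varepsilon,r,9N}_{*}(A\rtimes\Gamma)$, what you obtain is vanishing in $K^{\rho(9N)\varepsilon,\,r',\,N'}_{*}(A\rtimes\Gamma)$: the control has been degraded from $\varepsilon$ to $\rho(9N)\varepsilon$. (Moreover, Proposition \ref{prop 2.12}(ii) is only applicable when $\varepsilon<\tfrac{1}{20\rho(9N)}$, which your fixed $\varepsilon<\tfrac{1}{20}$ need not satisfy.) This is \emph{not} ``exactly the hypothesis of $QI_{A,*}(d,d',\varepsilon,r,N)$'': that hypothesis requires vanishing of a quantitative assembly class at the \emph{fixed} control $\varepsilon$, and the proposition's assumption is quantified over $r$, $N$, $d$ but not over $\varepsilon$; vanishing at the coarser control $\rho(9N)\varepsilon$ does not imply vanishing at $\varepsilon$, so the $QI$ statement you are allowed to invoke simply does not apply to the class you have. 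A second, related omission: $QI$ speaks about vanishing of $\mu^{\varepsilon,\cdot,\cdot,d}_{A,*}([z])$, whereas what 2.12(ii) kills is a \emph{relaxation} of $\mu^{\varepsilon,r,N,d}_{A,*}([z]_N)$; identifying that relaxed class with a value of the quantitative assembly map at new parameters requires equation \eqref{eq 2}, which you never invoke.

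The repair is to seed the argument at a smaller control so that the unavoidable loss of control lands you exactly at $\varepsilon$. Put $\varepsilon_0=\varepsilon/\rho(9N)$; then $\varepsilon_0<\tfrac{1}{20\rho(9N)}$ automatically, and by \eqref{eq 1}, $\iota_*\big(\mu^{\varepsilon_0,r,N,d}_{A,*}([z]_N)\big)=\mu^{d}_{A,*}([z])=0$. Proposition \ref{prop 2.12}(ii) now applies and yields $r'\geq r$ and $N''\geq 9N$ with $\iota^{\varepsilon,r',N''}_{*}\big(\mu^{\varepsilon_0,r,N,d}_{A,*}([z]_N)\big)=0$. Relaxing once more to the parameters $(\varepsilon,r',9N'')$ and using \eqref{eq 2}, this becomes $\mu^{\varepsilon,r',N'',d}_{A,*}([z]_{N''})=0$ in $K^{\varepsilon,r',9N''}_{*}(A\rtimes\Gamma)$, which \emph{is} the hypothesis of $QI_{A,*}(d,d',\varepsilon,r',N'')$ at the fixed $\varepsilon$; the assumption (valid for all $r$, $N$, $d$) then gives $d'\geq d$ with $i_{d,d',*}([z])=0$, so the class dies in the inductive limit and $\mu_{A,*}$ is injective. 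With this modification your proof is complete and follows the intended route.
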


The following results construct the connection between quantitative injectivity (resp. surjectivity) and injectivity (resp. surjectivity) of the $L^{p}$ Baum-Connes assembly map.

\begin{theorem}\label{the 2}
Let $\Gamma$ be a discrete group, and let $A$ be a $\Gamma$-$L^{p}$ operator algebra. Then the following two statements are equivalent:

$\mathrm(i)$ $\mu_{\ell^{\infty}(\mathbb{N},\mathscr{K}(\ell^{p})\otimes A),*}$ is injective.

$\mathrm(ii)$ For $0<\varepsilon<\frac{1}{20}$, $r>0$, $N\geq 1$ and $d>0$, there exists $d'\geq d$ such that $QI_{A,*}(d,d',\varepsilon,r,N)$ holds.

\end{theorem}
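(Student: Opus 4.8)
The plan is to prove the two implications separately, using as the central device the product decomposition of Lemma~\ref{lemma 2} applied to the coefficient algebra $\ell^{\infty}(\mathbb{N},\mathscr{K}(\ell^{p})\otimes A)$. The essential point is that, after Morita equivalence (Proposition~\ref{prop2}, and Proposition~\ref{prop 2.17} when $p=1$), the quantitative and variant $K$-theory of the localization and Roe algebras with coefficients in $\ell^{\infty}(\mathbb{N},\mathscr{K}(\ell^{p})\otimes A)$ splits as a product $\prod_{n\in\mathbb{N}}$ of the corresponding groups with coefficients in $A$, and that the quantitative $L^{p}$ assembly map respects this splitting componentwise via the coordinate projections $\ell^{\infty}(\mathbb{N},\mathscr{K}(\ell^{p})\otimes A)\to\mathscr{K}(\ell^{p})\otimes A$. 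Because the control pair in Lemma~\ref{lemma 2} is independent of the family, the parameters attached to each component can be kept uniform in $n$; this uniformity is exactly what allows a single quantitative injectivity statement, with one value of $d'$, to govern an entire sequence of classes at once. Throughout, the matching of the numerical constants (the factors $9N$, the relaxation polynomial $\rho$, and the control pair of Lemma~\ref{lemma 2}) is routine bookkeeping, so I suppress it below.

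For $(ii)\Rightarrow(i)$, I would start with $\xi$ in the kernel of $\mu_{\ell^{\infty}(\mathbb{N},\mathscr{K}(\ell^{p})\otimes A),*}$, represented by a class $[z]\in K_{*}(B^{p}_{L}(P_{d}(\Gamma),\ell^{\infty}(\mathbb{N},\mathscr{K}(\ell^{p})\otimes A))^{\Gamma})$ for some $d$, so that $\mu^{d}_{*}([z])=0$. Since $K_{*}=\lim_{N}K^{N}_{*}$, I lift $[z]$ to a variant class $[z]_{N}\in K^{N}_{*}$; then by \eqref{eq 1} the identity $\iota_{*}\circ\mu^{\varepsilon,r,N,d}_{*}([z]_{N})=0$ holds in $K_{*}$. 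As $\mu^{\varepsilon,r,N,d}_{*}([z]_{N})$ lies in a quantitative group and vanishes in ordinary $K$-theory, relaxation of control (Proposition~\ref{prop 2.12}(ii)) together with the compatibility \eqref{eq 2} yields $\mu^{\varepsilon',r',N',d}_{*}([z]_{N})=0$ at controlled parameters $(\varepsilon',r',N')$. Decomposing along $\prod_{n}$, each component $[x_{n}]\in K^{N'}_{*}(B^{p}_{L}(P_{d}(\Gamma),A)^{\Gamma})$ satisfies $\mu^{\varepsilon',r',N',d}_{A,*}([x_{n}])=0$ in $K^{\varepsilon',r',9N'}_{*}(A\rtimes\Gamma)$ with the \emph{same} $(\varepsilon',r',N')$; applying hypothesis $(ii)$ produces a single $d'$ with $QI_{A,*}(d,d',\varepsilon',r',N')$, whence $i_{d,d',*}([x_{n}])=0$ simultaneously for all $n$. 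Reassembling the components gives $i_{d,d',*}([z])=0$, so $\xi=0$.

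For $(i)\Rightarrow(ii)$ I would argue by contraposition. If $(ii)$ fails there are $\varepsilon,r,N,d$ so that for every $d'\geq d$ the statement $QI_{A,*}(d,d',\varepsilon,r,N)$ fails; choosing a cofinal sequence $d\leq d_{1}\leq d_{2}\leq\cdots\to\infty$ gives classes $[x_{n}]\in K^{N}_{*}(B^{p}_{L}(P_{d}(\Gamma),A)^{\Gamma})$ with $\mu^{\varepsilon,r,N,d}_{A,*}([x_{n}])=0$ but $i_{d,d_{n},*}([x_{n}])\neq0$ in ordinary $K$-theory. Since all $[x_{n}]$ lie at the common level $N$, Lemma~\ref{lemma 2} lets me package them into one class $[z]_{N}$ over $\ell^{\infty}(\mathbb{N},\mathscr{K}(\ell^{p})\otimes A)$ whose assembly $\mu^{\varepsilon,r,N,d}_{*}([z]_{N})$ vanishes componentwise, hence in total; by \eqref{eq 1} the induced ordinary class $\xi$ lies in the kernel of the assembly map. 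Injectivity $(i)$ forces $\xi=0$, i.e.\ $i_{d,d',*}([z])=0$ for some fixed $d'\geq d$. Composing with the $n$-th coordinate projection $\ell^{\infty}(\mathbb{N},\mathscr{K}(\ell^{p})\otimes A)\to\mathscr{K}(\ell^{p})\otimes A$ and Morita equivalence recovers $i_{d,d',*}([x_{n}])=0$ for every $n$; for $n$ large enough that $d_{n}\geq d'$ the factorization $i_{d,d_{n},*}=i_{d',d_{n},*}\circ i_{d,d',*}$ yields $i_{d,d_{n},*}([x_{n}])=0$, contradicting the choice of $[x_{n}]$.

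The step I expect to be the main obstacle is establishing the compatible product decomposition at the level of the localization algebra: one must verify that $B^{p}_{L}(P_{d}(\Gamma),\ell^{\infty}(\mathbb{N},\mathscr{K}(\ell^{p})\otimes A))^{\Gamma}$, with its uniform-propagation and uniform-continuity constraints, plays the role of the algebra $\mathcal{A}^{\infty}_{c}$ of Lemma~\ref{lemma 2} for the family $A_{n}=B^{p}_{L}(P_{d}(\Gamma),A)^{\Gamma}$, and that the quantitative assembly maps intertwine the coordinate projections. Equally delicate is ensuring, in $(ii)\Rightarrow(i)$, that the relaxation of control supplied by Proposition~\ref{prop 2.12}(ii) can be performed with parameters $(\varepsilon',r',N')$ \emph{independent of the index} $n$; this hinges on the family-independence of the control pair in Lemma~\ref{lemma 2}, and is the technical heart of the equivalence.
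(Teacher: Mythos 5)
Your proposal is correct and follows essentially the same route as the paper's own proof: for $(ii)\Rightarrow(i)$ you lift a kernel class to the variant $K$-theory $K^{N}_{*}$, use the compatibility \eqref{eq 1}, relax control via Proposition \ref{prop 2.12}(ii) and \eqref{eq 2} \emph{before} decomposing (so the parameters are automatically uniform in $n$), then apply $QI_{A,*}$ componentwise and reassemble; for $(i)\Rightarrow(ii)$ you package a cofinal sequence of counterexample classes into a single class over $\ell^{\infty}(\mathbb{N},\mathscr{K}(\ell^{p})\otimes A)$ and derive a contradiction, exactly as the paper does. The one bookkeeping remark: the product decomposition for localization algebras that you correctly single out as the technical heart is not literally an instance of Lemma \ref{lemma 2} (which concerns filtered algebras and quantitative $K$-theory) but is the paper's Proposition \ref{prop 3}, proved separately by skeleton induction and the five lemma.
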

\begin{proof}
The proof relies on  Proposition \ref{prop 3}, which will be proved later. Suppose (ii) holds. Let $[x]$ be in $K_{*}\Big(B^{p}_{L}\big(P_{d}(\Gamma), \ell^{\infty}(\mathbb{N},\mathscr{K}(\ell^{p})\otimes A)\big)^{\Gamma}\Big)$ for some $d>0$ such that
$$\mu^{d}_{\ell^{\infty}(\mathbb{N},\mathscr{K}(\ell^{p})\otimes A),*}([x])=0\text{ in }K_{*}\big(\ell^{\infty}(\mathbb{N},\mathscr{K}(\ell^{p})\otimes A)\rtimes\Gamma\big).$$
Then there exists $N'\geq1$ such that $[z]\in K^{N'}_{*}\big(\ell^{\infty}(\mathbb{N},\mathscr{K}(\ell^{p})\otimes A)\rtimes\Gamma\big)$, thus $\mu^{\varepsilon',r',N',d}_{\ell^{\infty}(\mathbb{N},\mathscr{K}(\ell^{p})\otimes A),*}([x])$ is an element of $K^{\varepsilon',r',9N'}_{*}\big(\ell^{\infty}(\mathbb{N},\mathscr{K}(\ell^{p})\otimes A)\rtimes\Gamma\big)$. By equation \ref{eq 1}, we obtain that 
$$\iota_{*}\big(\mu^{\varepsilon',r',N',d}_{\ell^{\infty}(\mathbb{N},\mathscr{K}(\ell^{p})\otimes A),*}([x])\big)=0\text{ for any }\varepsilon\in (0,\frac{1}{20}).$$
Hence, by Proposition \ref{prop 2.12} (ii) and equation \ref{eq 2}, there exist $\varepsilon\geq\varepsilon'$, $r\geq r'$ and $N\geq N'$ such that
 $$\mu^{\varepsilon,r,N,d}_{\ell^{\infty}(\mathbb{N},\mathscr{K}(\ell^{p})\otimes A),*}([x])=0\text{ in }K^{\varepsilon,r,9N}_{*}\big(\ell^{\infty}(\mathbb{N},\mathscr{K}(\ell^{p})\otimes A)\rtimes\Gamma\big).$$
According to Proposition \ref{prop 3}, we have an isomorphism
\begin{equation}\label{eq 3}
K_{*}\Big(B^{p}_{L}\big(P_{d}(\Gamma),\ell^{\infty}(\mathbb{N},\mathscr{K}(\ell^{p})\otimes A)\big)^{\Gamma}\Big)\xrightarrow{\cong}K_{*}\Big(B^{p}_{L}\big(P_{d}(\Gamma),A\big)^{\Gamma}\Big)^{\mathbb{N}}
\end{equation}
induced on the $j$-th factor by the projection $\ell^{\infty}(\mathbb{N},\mathscr{K}(\ell^{p})\otimes A)\rightarrow \mathscr{K}(\ell^{p})\otimes A$ and up to the Morita equivalence
\begin{equation}
K_{*}\Big(B^{p}_{L}\big(P_{d}(\Gamma), A\big)^{\Gamma}\Big)\cong K_{*}\Big(B^{p}_{L}\big(P_{d}(\Gamma),\mathscr{K}(\ell^{p})\otimes A\big)^{\Gamma}\Big).
\end{equation}
 Assume that $([x_{m}])_{m\in\mathbb{N}}$ is the element in $K_{*}\Big(B^{p}_{L}\big(P_{d}(\Gamma),A\big)^{\Gamma}\Big)^{\mathbb{N}}$ corresponding to $[x]$ under this identification, and let $d'\geq d$ be a positive number such that $QI_{A,*}(d,d',\varepsilon,r,N)$ holds. By naturality of the quantitative $L^{p}$ assembly maps, we get that
$$\mu^{\varepsilon,r,N,d}_{A,*}([x_{m}])=0\text{ in }K^{\varepsilon,r,9N}_{*}\Big(B^{p}_{L}\big(P_{d}(\Gamma),A\big)^{\Gamma}\Big),$$
which implies that $i_{d,d',*}([x_{m}])=0$ in $K_{*}\Big(B^{p}_{L}\big(P_{d'}(\Gamma),A\big)^{\Gamma}\Big)$ for each integer $m$. Finally, using equation \ref{eq 3}, we obtain that 
$$i_{d,d',*}([x])=0\text{ in }K_{*}\Big(B^{p}_{L}\big(P_{d'}(\Gamma),\ell^{\infty}(\mathbb{N},\mathscr{K}(\ell^{p})\otimes A)\big)^{\Gamma}\Big).$$
Hence $\mu_{\ell^{\infty}(\mathbb{N},\mathscr{K}(\ell^{p})\otimes A),*}$ is injective. Thus (ii) implies (i).

Suppose (ii) is false. In the even case, there exist $\varepsilon$ in $(0,\frac{1}{20})$, $r>0$, $N\geq 1$ and $d>0$ such that for all $d'\geq d$, the statement $QI_{A,0}(d,d',\varepsilon,r,N)$ does not hold. So it suffices to prove that $\mu_{\ell^{\infty}(\mathbb{N},\mathscr{K}(\ell^{p})\otimes A),0}$ is not injective. Let $(d_{m})_{m\in\mathbb{N}}$ be an increasing and unbounded sequence of positive numbers such that $d_{m}\geq d$ for all $m\in\mathbb{N}$. For each positive integer $m$, let $[x_{m}]$ be in $K^{N}_{0}\Big(B^{p}_{L}\big(P_{d}(\Gamma), A\big)^{\Gamma}\Big)$ such that 
$$\mu^{\varepsilon,r,N, d}_{A,0}([x_{m}])=0\text{ in }K^{\varepsilon,r,9N}_{0}(A\rtimes\Gamma)$$
but 
$$i_{d,d_{i},0}([x_{m}])\neq 0\text{ in }K_{0}\big(B^{p}_{L}(P_{d_{m}}(\Gamma),A)^{\Gamma}\big).$$
Assume that $[x]$ is the element in $K^{N}_{0}\Big(B^{p}_{L}\big(P_{d}(\Gamma),\ell^{\infty}(\mathbb{N},\mathscr{K}(\ell^{p})\otimes A)\big)^{\Gamma}\Big)$ corresponding to $([x_{m}])_{m\in\mathbb{N}}$ under the identification of equation \ref{eq 3}. Let $(e_{m})_{m\in\mathbb{N}}$ be a family of $(\varepsilon,r,9N)$-idempotents with $e_{m}$ in $M_{n_{k}}(\widetilde{A\rtimes\Gamma})$ for some $n_{k}$ such that 
$$\mu^{\varepsilon,r,N,d}_{\ell^{\infty}(\mathbb{N},\mathscr{K}(\ell^{p})\otimes A),0}([x])=[(e_{m})_{m\in\mathbb{N}}]_{\varepsilon,r,9N}\text{ in }K^{\varepsilon,r,9N}_{0}\big(\ell^{\infty}(\mathbb{N},\mathscr{K}(\ell^{p})\otimes A)\rtimes\Gamma\big).$$
By naturality of $\mu^{\varepsilon,r,N,d}_{A,0}$, we know that $[e_{m}]_{\varepsilon,r,9N}=0$ in $K^{\varepsilon,r,N}_{0}(A\rtimes\Gamma)$ for all integers $m$, hence
$$\iota_{0}([(e_{m})_{m\in\mathbb{N}}]_{\varepsilon,r,9N})=0\text{ in }K_{0}\big(\ell^{\infty}(\mathbb{N},\mathscr{K}(\ell^{p})\otimes A)\rtimes\Gamma\big).$$
This gives $\mu^{d}_{\ell^{\infty}(\mathbb{N},\mathscr{K}(\ell^{p})\otimes A),0}([x])=\iota_{0}\circ\mu^{\varepsilon,r,N,d}_{\ell^{\infty}(\mathbb{N},\mathscr{K}(\ell^{p})\otimes A),0}([x])=0$. For each positive integer $m$, $i_{d,d_{m},0}([x_{m}])\neq 0$ implies $i_{d,d_{m},0}([x])\neq 0$, thus we see that $\mu_{\ell^{\infty}(\mathbb{N},\mathscr{K}(\ell^{p})\otimes A),0}$ is not injective, hence (i) is false. In the odd case, we have a similar proof.
\end{proof}

\begin{theorem}
Let $\Gamma$ be a discrete group. Assume that for any $\Gamma$-$L^{p}$ operator algebra $A$, there exists a polynomial $\rho\geq 1$ with positive coefficients such that for any $\varepsilon$ in $(0,\frac{1}{20\rho(N)})$, $r>0$ and $N\geq 1$, there exist $r'\geq r$, $N'\geq N$ and $d>0$ such that $QS_{A,*}(d,\varepsilon,\rho(N)\varepsilon,r,r',N,N')$ holds. Then $\mu_{\ell^{\infty}(\mathbb{N},\mathscr{K}(\ell^{p})\otimes A),*}$ is surjective.

\end{theorem}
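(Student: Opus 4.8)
The plan is to run the argument of Theorem \ref{the 2} in the surjective direction, using the same product decomposition as the central device. Write $B:=\ell^{\infty}(\mathbb{N},\mathscr{K}(\ell^{p})\otimes A)$ and fix an arbitrary class $[y]\in K_{*}(B\rtimes\Gamma)$; the goal is to exhibit a preimage under $\mu_{B,*}$. First I would apply Proposition \ref{prop 2.12}(i) to replace $[y]$ by a quantitative representative: reading off a norm bound $N\geq 1$ for $[y]$ via Remark \ref{rmk 2.15}, then choosing $\varepsilon$ small enough that $\varepsilon<\frac{1}{20\rho(N)}$, there are $r>0$ and a class $[\tilde y]\in K^{\varepsilon,r,N}_{*}(B\rtimes\Gamma)$ with $\iota_{*}([\tilde y])=[y]$.

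Next I would invoke the product decomposition on both sides. Exactly as in Theorem \ref{the 2}, where Proposition \ref{prop 3} identifies $K_{*}\big(B^{p}_{L}(P_{d}(\Gamma),B)^{\Gamma}\big)$ with $K_{*}\big(B^{p}_{L}(P_{d}(\Gamma),A)^{\Gamma}\big)^{\mathbb{N}}$ up to the Morita equivalence of Propositions \ref{prop2} and \ref{prop 2.17}, the crossed-product side admits the analogous controlled decomposition coming from Lemma \ref{lemma 2}, giving a controlled identification of $K^{\varepsilon,r,N}_{*}(B\rtimes\Gamma)$ with $\prod_{m}K^{\varepsilon,r,N}_{*}(A\rtimes\Gamma)$. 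Under it I write $[\tilde y]=([\tilde y_{m}])_{m}$. For each $m$ the hypothesis $QS_{A,*}(d,\varepsilon,\rho(N)\varepsilon,r,r',N,N')$ supplies a class $[x_{m}]\in K^{N'}_{*}\big(B^{p}_{L}(P_{d}(\Gamma),A)^{\Gamma}\big)$ with $\mu^{\rho(N)\varepsilon,r',N',d}_{A,*}([x_{m}])=\iota^{\rho(N)\varepsilon,r',9N'}_{*}([\tilde y_{m}])$. The decisive point is that $d$, $r'$ and $N'$ furnished by $QS$ depend only on $\varepsilon,r,N$ and not on the individual element, so the same triple works for every $m$ at once; the uniform bound $N'$ lets me assemble $([x_{m}])_{m}$ into a single class $[x]\in K^{N'}_{*}\big(B^{p}_{L}(P_{d}(\Gamma),B)^{\Gamma}\big)$, all living at the same scale $d$.

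To finish I would transport the factorwise equalities through naturality of the quantitative assembly maps, obtaining $\mu^{\rho(N)\varepsilon,r',N',d}_{B,*}([x])=\iota^{\rho(N)\varepsilon,r',9N'}_{*}([\tilde y])$ in $K^{\rho(N)\varepsilon,r',9N'}_{*}(B\rtimes\Gamma)$. Pushing $[x]$ into honest $K$-theory via $K_{*}=\lim_{N}K^{N}_{*}$ and applying the compatibility relation \eqref{eq 1} then yields
\[
\mu^{d}_{B,*}([x])=\iota_{*}\circ\mu^{\rho(N)\varepsilon,r',N',d}_{B,*}([x])=\iota_{*}\big(\iota^{\rho(N)\varepsilon,r',9N'}_{*}([\tilde y])\big)=\iota_{*}([\tilde y])=[y],
\]
so $[y]$ lies in the image of $\mu^{d}_{B,*}$, hence of $\mu_{B,*}$, and surjectivity follows.

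The main obstacle I anticipate is the product decomposition on the crossed-product side together with its compatibility with the assembly map: one must check that the controlled isomorphism of Lemma \ref{lemma 2} intertwines the quantitative assembly map $\mu^{\,\cdot\,,d}_{B,*}$ with the product $\prod_m\mu^{\,\cdot\,,d}_{A,*}$, so that solving the lifting problem factorwise genuinely solves it for $B$. Verifying this naturality --- and confirming that $\ell^{\infty}(\mathbb{N},\mathscr{K}(\ell^{p})\otimes A)\rtimes\Gamma$ carries the filtration identifying it, up to Morita equivalence, with the algebra $\mathcal{A}^{\infty}_{c}$ of the constant family $(A\rtimes\Gamma)_{m}$ --- is where the real work lies; the remaining parameter bookkeeping is routine given Propositions \ref{prop 2.12} and \ref{prop 3.24}.
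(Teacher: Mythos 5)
Your proposal is correct and follows essentially the same route as the paper's proof: lift $[y]$ to a quantitative class via Proposition \ref{prop 2.12}(i), push to each factor by the projection-plus-Morita maps, apply $QS_{A,*}$ factorwise using the crucial uniformity of $d$, $r'$, $N'$ in $m$, reassemble via the identification of Proposition \ref{prop 3} (equation \ref{eq 3}), and conclude by naturality together with the compatibility relation \eqref{eq 1}. The only cosmetic difference is that you invoke the full controlled isomorphism of Lemma \ref{lemma 2} on the crossed-product side, whereas the paper only needs the one-way factor maps of equation \ref{eq 5}; since the forward map of that controlled isomorphism is exactly the projection-induced map, this changes nothing.
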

\begin{proof}
The proof relies on Proposition \ref{prop 3}, which will be proved later. Let $\rho$ be as in Proposition \ref{prop 2.11}. Suppose the statement $QS_{A,*}(d,\varepsilon,\rho(N)\varepsilon,r,r',N,N')$ holds.  Let $[z]$ be the element in  $K_{*}\big(\ell^{\infty}(\mathbb{N},\mathscr{K}(\ell^{p})\otimes A)\rtimes\Gamma\big)$ and let $[y]$ be in $K^{\varepsilon,r', N'}_{*}\big(\ell^{\infty}(\mathbb{N},\mathscr{K}(\ell^{p})\otimes A)\rtimes\Gamma\big)$ such that $\iota_{*}([y])=[z]$, with $\varepsilon\in (0,\frac{1}{20\rho(N)})$, $r>0$ and $N\geq 1$. Let $[y_{i}]$ be the image of $[y]$ under the composition
\begin{equation}\label{eq 5}
K^{\varepsilon,r, N}_{*}\big(\ell^{\infty}(\mathbb{N},\mathscr{K}(\ell^{p})\otimes A)\rtimes\Gamma\big)\rightarrow K^{\varepsilon,r, N}_{*}\big(\mathscr{K}(\ell^{p})\otimes A\rtimes\Gamma\big)\xrightarrow{\cong}K^{\varepsilon,r, N}_{*}(A\rtimes\Gamma),
\end{equation}
where the first map is induced on the $j$-th factor by the projection
$$\ell^{\infty}(\mathbb{N},\mathscr{K}(\ell^{p})\otimes A)\rightarrow \mathscr{K}(\ell^{p})\otimes A$$
and the second map is the Morita equivalence of Proposition \ref{prop2} and Proposition \ref{prop 2.17}. Let $d$, $r'$ and $N'$ be positive numbers with $r'\geq r$ and $N'\geq N$
such that $QS_{A,*}(d,\varepsilon,\rho(N)\varepsilon,r,r',N,N')$  holds. Then for each positive integer $m$, there exists $[x_{m}]$ in $K^{N'}_{*}\Big(B^{p}_{L}\big(P_{d}(\Gamma), A\big)^{\Gamma}\Big)$ such that 
$$\mu^{\rho(N)\varepsilon,r',N',d}_{A,*}([x_{m}])=\iota^{\rho(N)\varepsilon,r',9N'}_{*}([y_{m}])\text{ in }K^{\rho(N)\varepsilon,r',9N'}_{*}(A\rtimes\Gamma).$$
Let $[x]$ be the element of $K^{N'}_{*}\Big(B^{p}_{L}\big(P_{d}(\Gamma), \ell^{\infty}(\mathbb{N},\mathscr{K}(\ell^{p})\otimes A)\big)^{\Gamma}\Big)$ corresponding to $([x_{m}])_{m\in\mathbb{N}}$ under the identification of equation \ref{eq 3}. By naturality of the quantitative $L^{p}$ assembly maps, we get that 
$$\mu^{\rho(N)\varepsilon,r',N',d}_{\ell^{\infty}(\mathbb{N},\mathscr{K}(\ell^{p})\otimes A),*}([x])=\iota^{\rho(N)\varepsilon,r',9N'}_{*}([y])$$
in $K^{\rho(N)\varepsilon,r',9N'}_{*}\big(\ell^{\infty}(\mathbb{N},\mathscr{K}(\ell^{p})\otimes A)\rtimes\Gamma\big)$. Hence, we conclude that
$$\mu^{d}_{\ell^{\infty}(\mathbb{N},\mathscr{K}(\ell^{p})\otimes A),*}([x])=\iota_{*}([y])=[z],$$
and therefore $\mu_{\ell^{\infty}(\mathbb{N},\mathscr{K}(\ell^{p})\otimes A),*}$ is surjective.
\end{proof}

The next theorem gives the connection between controlled-surjectivity of the $\mathscr{N}$-$L^{p}$ Baum-Connes assembly map and quantitative surjectivity.

\begin{theorem}\label{the 3}
Let $\Gamma$ be a discrete group. Assume that for any $\Gamma$-$L^{p}$ operator algebra $A$ and any positive integer $\mathscr{N}$, there exists a non-decreasing function $\omega: [1,\infty)\rightarrow [1,\infty)$  such that $\mu^{\mathscr{N}}_{\ell^{\infty}(\mathbb{N},\mathscr{K}(\ell^{p})\otimes A),*}$ is $\omega$-surjective. Then for some polynomial $\rho\geq 1$ with positive coefficients and for any  $\varepsilon$ in $(0,\frac{1}{20\rho(9N\omega(4N))})$, $r>0$ and $N\geq 1$, there exist $r'\geq r$, $N'\geq N$ and $d>0$ such that $QS_{A,*}(d,\varepsilon,\rho(9N\omega(4N))\varepsilon,r,r',N,N')$ holds.
\end{theorem}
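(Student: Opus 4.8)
The plan is to deduce the fixed-scale statement $QS$ from $\omega$-surjectivity of the $\mathscr N$-$L^p$ assembly map for the coefficient algebra $\ell^{\infty}(\mathbb N,\mathscr K(\ell^p)\otimes A)$, the whole point of the $\ell^{\infty}$-coefficients being to manufacture a single Rips scale $d$ and a single propagation $r'$ valid for all elements at once. I argue by contradiction, in the spirit of the second half of the proof of Theorem \ref{the 2}. Fix $\varepsilon\in(0,\frac{1}{20\rho(9N\omega(4N))})$, $r>0$, $N\ge1$, set $N':=N\omega(4N)\ (\ge N)$, and let $r'$ be the universal propagation produced below; suppose $QS_{A,*}(d,\varepsilon,\rho(9N\omega(4N))\varepsilon,r,r',N,N')$ fails for every $d>0$. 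Taking an increasing unbounded sequence $(d_m)_{m}$ yields, for each $m$, a class $[y_m]\in K^{\varepsilon,r,N}_{*}(A\rtimes\Gamma)$ witnessing the failure at scale $d_m$. Assembling the $[y_m]$ through the Morita identifications of Propositions \ref{prop2}, \ref{prop 2.17} and the identification \ref{eq 5} gives a single class $[Y]\in K^{\varepsilon,r,N}_{*}\big(\ell^{\infty}(\mathbb N,\mathscr K(\ell^p)\otimes A)\rtimes\Gamma\big)$.

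I then leave the quantitative world for the variant one: $\iota^{N}_{*}([Y])\in K^{4N}_{*}\big(\ell^{\infty}(\mathbb N,\mathscr K(\ell^p)\otimes A)\rtimes\Gamma\big)$, and the hypothesis with $\mathscr N=4N$ furnishes, by $\omega$-surjectivity of $\mu^{4N}_{\ell^{\infty}(\mathbb N,\mathscr K(\ell^p)\otimes A),*}$, a preimage in $\lim\limits_{d>0}K^{\omega(4N)}_{*}\big(B^{p}_{L}(P_{d}(\Gamma),\ell^{\infty}(\mathbb N,\mathscr K(\ell^p)\otimes A))^{\Gamma}\big)$. Being an element of a direct limit, it is represented at a single finite scale $d_0>0$ by a class $[X]$ with
$$\mu^{\omega(4N),d_0}_{\ell^{\infty}(\mathbb N,\mathscr K(\ell^p)\otimes A),*}([X])=\iota^{N}_{*}([Y])\quad\text{in}\quad K^{4N\omega(4N)}_{*}\big(\ell^{\infty}(\mathbb N,\mathscr K(\ell^p)\otimes A)\rtimes\Gamma\big).$$
This common $d_0$, valid across all coordinates simultaneously, is exactly what the $\ell^{\infty}$-coefficients buy and what the fixed-$d$ form of $QS$ requires.

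Next I convert this into a quantitative identity and descend to the factors. Relaxing $[X]$ to the level $N'=N\omega(4N)$ and forming $\mu^{\varepsilon,r,N',d_0}_{\ell^{\infty}(\mathbb N,\mathscr K(\ell^p)\otimes A),*}([X])\in K^{\varepsilon,r,9N'}_{*}(\cdots)$, I push the last identity forward to ordinary $K$-theory and use the compatibility \ref{eq 1} of the quantitative assembly with the ordinary one to obtain
$$\iota_{*}\big(\mu^{\varepsilon,r,N',d_0}_{\ell^{\infty}(\mathbb N,\mathscr K(\ell^p)\otimes A),*}([X])\big)=\iota_{*}([Y])=\iota_{*}\big(\iota^{\varepsilon,r,9N'}_{*}([Y])\big)$$
in $K_{*}\big(\ell^{\infty}(\mathbb N,\mathscr K(\ell^p)\otimes A)\rtimes\Gamma\big)$. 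Proposition \ref{prop 2.11}, applied at norm level $9N'=9N\omega(4N)$ to these two classes of $K^{\varepsilon,r,9N'}_{*}$, then supplies a single $r'\ge r$ with
$$\mu^{\rho(9N\omega(4N))\varepsilon,\,r',\,N',\,d_0}_{\ell^{\infty}(\mathbb N,\mathscr K(\ell^p)\otimes A),*}([X])=\iota^{\rho(9N\omega(4N))\varepsilon,\,r',\,9N'}_{*}([Y]).$$
Projecting onto the $m$-th factor through Proposition \ref{prop 3} and the Morita equivalence (as in \ref{eq 5}), and transporting the resulting witness along $i^{N'}_{d_0,d_m,*}$ by means of $\mu^{\varepsilon,r,N',d_0}_{A,*}=\mu^{\varepsilon,r,N',d_m}_{A,*}\circ i^{N'}_{d_0,d_m,*}$, produces for each $m$ a $QS$-witness for $[y_m]$ at scale $d_m\ge d_0$, contradicting the choice of $[y_m]$ and proving the theorem.

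The \emph{main obstacle} is twofold. Conceptually, everything turns on the uniformity of the scale $d$ and the propagation $r'$: $\omega$-surjectivity for $A$ alone attaches to each element its own $d$ and its own homotopy, whereas passing to $\ell^{\infty}(\mathbb N,\mathscr K(\ell^p)\otimes A)$ and invoking Proposition \ref{prop 3} compresses an entire sequence into one class, so that a single $d_0$ and a single $r'$ (the latter from one application of Proposition \ref{prop 2.11} at the $\ell^{\infty}$-level) serve all factors at once, which is precisely the fixed-parameter feature of $QS$. Technically, the delicate point is the parameter arithmetic: one must choose the variant level $N'=N\omega(4N)$ so that applying the relaxation-of-control proposition at norm level $9N'$ yields exactly the factor $\rho(9N\omega(4N))$ while keeping $N'\ge N$, and one must reconcile this prescribed output level with the homotopy-path-dependent level that such relaxation maps naturally produce. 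Checking that $r'$ and $N'$ are genuinely independent of $m$ (and of the scale) is what legitimizes the final contradiction.
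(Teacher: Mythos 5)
Your skeleton is the one the paper uses (negate, assemble the failure witnesses into a single class over $\ell^{\infty}(\mathbb{N},\mathscr{K}(\ell^{p})\otimes A)$ via \ref{eq 5}, apply $\omega$-surjectivity with $\mathscr{N}=4N$ to get one preimage at one scale, invoke a relaxation-of-control proposition to get one propagation, project back to the factors), but two steps fail as written. The first is a circularity in your quantifiers: you fix $N'=N\omega(4N)$ and ``let $r'$ be the universal propagation produced below,'' and only then choose witnesses $[y_m]$ to the failure of $QS_{A,*}(d_m,\varepsilon,\rho(9N\omega(4N))\varepsilon,r,r',N,N')$. But the $r'$ your argument produces comes from applying Proposition \ref{prop 2.11} to the classes $[X]$ and $[Y]$, which are built out of the $[y_m]$ --- data that exists only after the negation has been invoked with that very $r'$. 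The uniformity of $r'$ over all classes is exactly what is at stake in the theorem, so it cannot be pinned down in advance. The paper's negation avoids this: it takes unbounded increasing sequences $(r_m)$, $(N_m)$, $(d_m)$ together with a witness $[y_m]$ for each triple, runs the argument to produce specific $d'$, $r'$ and a level $33N_1$, and only then picks $m$ with $r_m\geq r'$, $N_m\geq 33N_1$, $d_m\geq d'$, using \ref{eq 2} and naturality to convert the witness at $(d',r',33N_1)$ into one at $(d_m,r_m,N_m)$ and reach the contradiction.

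The second gap is that your application of Proposition \ref{prop 2.11} claims more than it delivers. You apply it at norm level $9N'$ to two classes of $K^{\varepsilon,r,9N'}_{*}$ agreeing in ordinary $K$-theory and assert an identity at parameters $(\rho(9N\omega(4N))\varepsilon,\,r',\,9N')$. But Proposition \ref{prop 2.11} produces \emph{both} an $r'\geq r$ \emph{and} an $N''\geq 9N'$, and by Remark \ref{rmk 2.13} this $N''$ depends on the norm of a homotopy path in ordinary $K$-theory, which is not bounded in terms of the input data; the identity you obtain therefore sits at the uncontrolled level $N''$, not at $9N'$. Since you froze $N'=N\omega(4N)$ in advance, nothing can absorb $N''$ and the final contradiction collapses. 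This is precisely why the paper stays inside the norm-controlled variant theory: $\omega$-surjectivity of the $4N$-assembly map gives equality already in $K^{\omega(4N)\cdot 4N}_{*}$ (not merely in $K_{*}$ after applying \ref{eq 1}), which is the hypothesis of Proposition \ref{prop 3.24}, and that proposition returns a single $r'$ with the output level pinned at $33N_1$. I note that your route through Proposition \ref{prop 2.11} \emph{could} be repaired, but only after fixing the first gap: with unbounded sequences $(r_m)$ and $(N_m)$ in the negation, the uncontrolled $N''$ would be absorbed by choosing $m$ with $9N_m\geq N''$, since $N'$ is existentially quantified in the statement. As written, however, both the quantifier structure and the parameter claim are incorrect.
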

\begin{proof}
Assume that this statement does not hold. Then there exist 
\begin{itemize}
\item $\varepsilon$ in $(0,\frac{1}{20\rho(N)})$, $r>0$ and $N\geq 1$,

\item an unbounded increasing  sequence $(r_{m})_{m\in\mathbb{N}}$ with $r_{m}\geq r$, 

\item an unbounded increasing  sequence $(N_{m})_{m\in\mathbb{N}}$ with $N_{m}\geq N$,

\item an unbounded increasing  sequence $(d_{m})_{m\in\mathbb{N}}$ with $d_{m}>0$,

\item an element $[y_{m}]$ in $K^{\varepsilon,r,N}_{*}(A\rtimes\Gamma)$,
\end{itemize}
such that for each $m\in\mathbb{N}$ and any $[x_{m}]$ in $K^{N_{m}}_{*}\big(B^{p}_{L}(P_{d_{i}}(\Gamma), A)^{\Gamma}\big)$,
$$\iota^{\rho(9N\omega(4N))\varepsilon,r_{m},9N_{m}}_{*}([y_{m}])\neq\mu^{\rho(9N\omega(4N))\varepsilon,r_{m},N_{m},d_{m}}_{A,*}([x_{m}])$$ 
$\text{ in }K^{\rho(9N\omega(4N))\varepsilon,r_{m},9N_{m}}_{*}(A\rtimes\Gamma)$. According to equation \ref{eq 5}, there exists $$[y]\in K^{\varepsilon,r,N}_{*}\big(\ell^{\infty}(\mathbb{N},\mathscr{K}(\ell^{p})\otimes A)\rtimes\Gamma\big)$$ such that for every positive integer $m$, the image of $[y]$ is $[y_{m}]$. Since $\mu^{\mathscr{N}}_{\ell^{\infty}(\mathbb{N},\mathscr{K}(\ell^{p})\otimes A),*}$ is $\omega$-surjective, then for some $d'>0$ there exists $[x]$ in $K^{\omega(4N)}_{*}\Big(B^{p}_{L}\big(P_{d'}(\Gamma),\ell^{\infty}(\mathbb{N},\mathscr{K}(\ell^{p})\otimes A)\big)^{\Gamma}\Big)$ such that 
$$\iota^{N}_{*}([y])=\mu^{\omega(4N),d'}_{\ell^{\infty}(\mathbb{N},\mathscr{K}(\ell^{p})\otimes A),*}([x])\text{ in }K^{\omega(4N)\cdot 4N}_{*}\big(\ell^{\infty}(\mathbb{N},\mathscr{K}(\ell^{p})\otimes A)\rtimes\Gamma\big).$$
Since the quantitative $L^{p}$ assembly maps are compatible with the $\omega(4N)$-$L^{p}$ assembly maps, we get that
$$\mu^{4N_{1},d'}_{\ell^{\infty}(\mathbb{N},\mathscr{K}(\ell^{p})\otimes A),*}([x]_{4N_{1}})=\iota^{N_{1}}_{*}\circ\mu^{\varepsilon,r,\omega(4N),d'}_{\ell^{\infty}(\mathbb{N},\mathscr{K}(\ell^{p})\otimes A),*}([x]_{\omega(4N)}),$$
where $N_{1}=\max\{\omega(4N)\cdot N,9\omega(4N)\}$.
We now apply Proposition \ref{prop 3.24} and conclude that there exists $r'\geq r$ such that
$$\iota^{\rho(9N\omega(4N))\varepsilon,r',33N_{1}}_{*}\circ\mu^{\varepsilon,r,\omega(4N),d'}_{\ell^{\infty}(\mathbb{N},\mathscr{K}(\ell^{p})\otimes A),*}([x])=\iota^{\rho(9N\omega(4N))\varepsilon,r',33N_{1}}_{*}([y]).$$
However, if we choose $m$ such that $r_{m}\geq r'$, $N_{m}\geq 33N_{1}$ and $d_{m}\geq d'$, using naturality of the $L^{p}$ assembly map and equation \ref{eq 2}, we obtain that 
$$\iota^{\rho(9N\omega(4N))\varepsilon,r_{m},9N_{m}}_{*}([y_{m}])=\mu^{\rho(9N\omega(4N))\varepsilon,r_{m},N_{m},d_{m}}_{A,*}([x_{m}]),$$
which contradicts our assumption.
\end{proof}
In the proof of $(i)$ implies $(ii)$ of Theorem \ref{the 2} and Theorem \ref{the 3}, replacing the algebra \\$\ell^{\infty}(\mathbb{N},\mathscr{K}(\ell^{p})\otimes A)$ by $\prod\limits_{i\in\mathbb{N}}(\mathscr{K}(\ell^{p})\otimes A_{i})$ for a family of $\Gamma$-$L^{p}$ operator algebras $(A_{i})_{i\in\mathbb{N}}$ , we can obtain the following theorem.

\begin{theorem}
Let $\Gamma$ be a discrete group.

$\mathrm(i)$ Assume that for any $\Gamma$-$L^{p}$ operator algebra $A$, the $L^{p}$ Baum-Connes assembly map $\mu_{A,*}$ is injective. Then for  $0<\varepsilon<\frac{1}{20}$, $r>0$, $N\geq 1$ and $d>0$, there exists $d'\geq d$ such that $QI_{A,*}(d,d',\varepsilon,r,N)$ holds.

$\mathrm(ii)$ Assume that for any $\Gamma$-$L^{p}$ operator algebra $A$ and for any integer $\mathscr{N}$, there exists a non-decreasing function $\omega: [1,\infty)\rightarrow [1,\infty)$  such that the $\mathscr{N}$-$L^{p}$ Baum-Connes assembly map $\mu^{\mathscr{N}}_{A,*}$ is $\omega$-surjective. Then for some polynomial $\rho\geq 1$ with positive coefficients and for any $\varepsilon$ in $(0,\frac{1}{20\rho(9N\omega(4N))})$, $r>0$ and $N\geq 1$, there exist $d>0$, $r'\geq r$ and $N'\geq N$ such that \\$QS_{A,*}(d,\varepsilon,\rho(9N\omega(4N))\varepsilon,r,r',N,N')$ holds.
\end{theorem}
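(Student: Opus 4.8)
The plan is to derive both parts directly from Theorem \ref{the 2} and Theorem \ref{the 3}, whose hypotheses involve only the single coefficient algebra $\ell^{\infty}(\mathbb{N},\mathscr{K}(\ell^{p})\otimes A)$, by feeding in the universally quantified assumptions of the present statement. The first thing I would record is that for any family $(A_{i})_{i\in\mathbb{N}}$ of $\Gamma$-$L^{p}$ operator algebras the product $\prod_{i\in\mathbb{N}}(\mathscr{K}(\ell^{p})\otimes A_{i})$ is again a $\Gamma$-$L^{p}$ operator algebra under the diagonal action, so that the injectivity and controlled-surjectivity hypotheses may be applied to it; choosing every $A_{i}=A$ recovers $\ell^{\infty}(\mathbb{N},\mathscr{K}(\ell^{p})\otimes A)$. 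I would then observe that the proofs of Theorem \ref{the 2} and Theorem \ref{the 3} never exploit equality of the factors: they proceed factorwise through the decomposition of equation \ref{eq 3}, furnished by Proposition \ref{prop 3} and the controlled isomorphism of Lemma \ref{lemma 2}, and so remain valid verbatim with $\prod_{i}(\mathscr{K}(\ell^{p})\otimes A_{i})$ replacing $\ell^{\infty}(\mathbb{N},\mathscr{K}(\ell^{p})\otimes A)$.

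For part (i), I would fix $\varepsilon\in(0,\frac{1}{20})$, $r>0$, $N\geq 1$ and $d>0$ and apply the injectivity hypothesis to the coefficient algebra $\ell^{\infty}(\mathbb{N},\mathscr{K}(\ell^{p})\otimes A)$, which is a $\Gamma$-$L^{p}$ operator algebra. This is exactly statement (i) of Theorem \ref{the 2}, and the implication (i) $\Rightarrow$ (ii) of that theorem then produces a $d'\geq d$ for which $QI_{A,*}(d,d',\varepsilon,r,N)$ holds. For part (ii), I would fix a positive integer $\mathscr{N}$ and apply the controlled-surjectivity hypothesis to the same algebra, obtaining a non-decreasing $\omega$ with $\mu^{\mathscr{N}}_{\ell^{\infty}(\mathbb{N},\mathscr{K}(\ell^{p})\otimes A),*}$ being $\omega$-surjective. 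This is precisely the hypothesis of Theorem \ref{the 3}, whose conclusion supplies the polynomial $\rho$ and, for each $\varepsilon\in(0,\frac{1}{20\rho(9N\omega(4N))})$, $r>0$ and $N\geq 1$, the parameters $d>0$, $r'\geq r$ and $N'\geq N$ witnessing $QS_{A,*}(d,\varepsilon,\rho(9N\omega(4N))\varepsilon,r,r',N,N')$.

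Since the substantive theorems are invoked as black boxes, no new ideas are needed and the work reduces to bookkeeping. I would check that equation \ref{eq 3} and the naturality of the quantitative $L^{p}$ assembly maps genuinely hold for the general product, applying the Morita equivalences of Proposition \ref{prop2} and Proposition \ref{prop 2.17} in each factor. The main obstacle I anticipate is keeping the control data uniform across the index: the factorwise argument requires a single function $\omega$, and hence a single constant $\rho(9N\omega(4N))$ with intermediate parameter $N_{1}=\max\{\omega(4N)\cdot N,\,9\omega(4N)\}$ as in the proof of Theorem \ref{the 3}, to serve simultaneously for all $i$, which is exactly what the universally quantified hypothesis guarantees.
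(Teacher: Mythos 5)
Your second paragraph is logically sound under one reading of the statement: since $\ell^{\infty}(\mathbb{N},\mathscr{K}(\ell^{p})\otimes A)$ is itself a $\Gamma$-$L^{p}$ operator algebra, the universally quantified hypotheses of (i) and (ii) instantiate to exactly the hypotheses of Theorem \ref{the 2} and Theorem \ref{the 3}, and citing those theorems as black boxes then yields, \emph{for each fixed $A$}, a $d'$ (resp.\ a triple $(d,r',N')$) for which $QI_{A,*}$ (resp.\ $QS_{A,*}$) holds. But notice what this reading makes of the theorem: it becomes a one-line corollary of Theorems \ref{the 2} and \ref{the 3}, and the paper's proof indication --- ``replace $\ell^{\infty}(\mathbb{N},\mathscr{K}(\ell^{p})\otimes A)$ by $\prod_{i\in\mathbb{N}}(\mathscr{K}(\ell^{p})\otimes A_{i})$ for a \emph{family} of $\Gamma$-$L^{p}$ operator algebras'' --- would be pointless, since equal factors would suffice. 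The family of genuinely different algebras is there for a reason: following the original $C^{*}$-statement of Oyono-Oyono and Yu, the intended conclusion is that the parameters $d'$, resp.\ $(d,r',N')$, can be chosen \emph{uniformly over all coefficient algebras $A$} (only $\varepsilon$, $r$, $N$, $d$ being given), and that uniformity is the entire content of the theorem beyond Theorems \ref{the 2} and \ref{the 3}.

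This is where your proposal has a genuine gap. The parameters produced by Theorems \ref{the 2} and \ref{the 3} come from proofs by contradiction carried out for the single fixed algebra $\ell^{\infty}(\mathbb{N},\mathscr{K}(\ell^{p})\otimes A)$, so they depend on $A$, and no amount of bookkeeping with equal factors removes that dependence. The uniform statement requires exactly the argument your first paragraph asserts ``remains valid verbatim'' but then never uses: supposing no uniform $d'$ exists, one picks $d_{m}\to\infty$, counterexample algebras $A_{m}$ (different for different $m$) and classes $[x_{m}]$ over $A_{m}$, assembles them into a single class over $\prod_{m\in\mathbb{N}}(\mathscr{K}(\ell^{p})\otimes A_{m})$ via Proposition \ref{prop 3} and Lemma \ref{lemma 2} --- which is precisely why those two results are stated for arbitrary families rather than only for $\ell^{\infty}(\mathbb{N},\mathscr{K}(\ell^{p})\otimes A)$ --- and then contradicts the hypothesis applied to that product algebra; similarly for $QS_{A,*}$, where the classes $[y_{m}]\in K^{\varepsilon,r,N}_{*}(A_{m}\rtimes\Gamma)$ are assembled using that the $(\varepsilon,r,N)$-control is the same in every factor. (A caveat you could legitimately raise against the paper itself: in part (ii) the function $\omega$ of the hypothesis depends on the coefficient algebra, so a fully uniform formulation needs $\omega$ to be applied to the product algebra arising in the contradiction argument; the paper is sloppy about this, but that imprecision is inherited from the statement, not a defect your shortcut repairs.) In short: correct for the per-algebra reading, but the per-algebra reading trivializes the result; the uniform version, which the paper's proof is designed to establish, is not reachable by your black-box citations.
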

\begin{remark}
To complete the proof of Theorem \ref{the 2} and Theorem \ref{the 3}, we need Proposition \ref{prop 3} which is based on a couple of lemmas.
\end{remark}

\begin{lemma}\label{lemma 3.16}
Let $A$ be a unital $L^{p}$ operator algebra. There exists a map $\varphi:(0,\infty)\rightarrow (0,\infty)$ such that:
\begin{itemize}
\item If $e$ and $f$ are homotopic idempotents in $M_{n}(A)$, then there exist $k,N\in\mathbb{N}$ with $n+k\leq N$, and a homotopy of idempotents $(E_{t})_{t\in [0,1]}$ in $M_{N}(A)$ between $diag(e,I_{k},0)$ and $diag(f,I_{k},0)$ such that $\Vert E_{t}-E_{s}\Vert\leq\varepsilon$ when $\vert s-t\vert\leq\varphi(\varepsilon)$ for any $\varepsilon >0$ and any $s, t\in [0,1]$.

\item If $u$ and $v$ are homotopic invertibles in $GL_{n}(A)$, then there exist an integer $k$ and a homotopy $(U_{t})_{t\in[0,1]}$ in $GL_{n+k}(A)$ between $diag(u, I_{k})$ and $diag(v,I_{k})$ such that $\Vert U_{s}-U_{t}\Vert\leq\varepsilon$ when $\vert s-t\vert\leq\phi(\varepsilon)$ for any $\varepsilon>0$ and any $s, t\in [0,1]$.
\end{itemize}
\end{lemma}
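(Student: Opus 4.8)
\emph{Proof plan.} The plan is to treat the two bullets in parallel, reducing each to a \emph{standard}-shape homotopy whose modulus of continuity can be read off directly, and then to produce $\varphi$ (resp.\ $\phi$) by a single arc-length reparametrization applied to that standard homotopy. The integers $k,N$ and the homotopy itself will depend on $e,f$ (resp.\ $u,v$), but the modulus function is to be extracted in a way that does not.

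For the idempotent case I would begin from an arbitrary continuous homotopy $(p_t)_{t\in[0,1]}$ of idempotents joining $e$ and $f$ in $M_n(A)$. Since $[0,1]$ is compact, $(p_t)$ is uniformly continuous and $M:=\sup_t\|p_t\|<\infty$. I would then choose a partition $0=s_0<\dots<s_m=1$ fine enough that $\|p_{s_i}-p_{s_{i-1}}\|$ lies below the threshold at which the classical formula $z_i:=p_{s_i}p_{s_{i-1}}+(1-p_{s_i})(1-p_{s_{i-1}})$ yields an invertible with $\|z_i-1\|<1$ and $p_{s_i}=z_i\,p_{s_{i-1}}\,z_i^{-1}$. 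On each subinterval I would insert the conjugation homotopy $t\mapsto z_i(t)\,p_{s_{i-1}}\,z_i(t)^{-1}$, where $z_i(t)=(1-t)\cdot 1+t\,z_i$ is the straight line from $1$ to $z_i$ (invertible throughout, by the Neumann series, because $\|z_i(t)-1\|\le\|z_i-1\|<1$). Concatenating the pieces produces a genuine homotopy of idempotents from $e$ to $f$ all of whose norms are bounded in terms of $M$ and the uniform bound $\tfrac{1}{1-\|z_i-1\|}$ on $\|z_i^{-1}\|$; the stabilization $\mathrm{diag}(\,\cdot\,,I_k,0)$ and the size $N$ are then read off from the matrix sizes appearing in these conjugations.

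To turn this into a statement about a modulus, I would \emph{not} distribute the pieces over equal time intervals but instead reparametrize the concatenated path by normalized arc length, assigning to each point the proportion of total distance $\ell$ travelled so far. The resulting $(E_t)$ satisfies $\|E_s-E_t\|\le \ell\,|s-t|$, so on this path one may take $\varphi(\varepsilon)=\varepsilon/\ell$. The invertible case proceeds identically: starting from a continuous path of invertibles in $GL_n(A)$, I would use the Whitehead-type rotation identity $\mathrm{diag}(w,w^{-1})\sim I_{2m}$ through the canonical rotation homotopy (whose $\theta$-dependence is Lipschitz with a universal constant, the $2\times 2$ rotation matrices being uniformly bounded operators on $\ell^p_2$) to reduce to straight-line segments between nearby invertibles, and then apply the same arc-length reparametrization to obtain $\phi$.

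The main obstacle is exactly the \emph{uniformity} of $\varphi$ and $\phi$, that is, the passage from the per-path bound $\varepsilon/\ell$ to a single function independent of $(e,f)$ and $(u,v)$. This forces one to bound the total length $\ell$ by a quantity depending only on the structural data. In the $C^{*}$-setting this is automatic: idempotents are normalized to projections and invertibles to unitaries, each of norm one, which caps $\ell$ outright; but in a general $L^{p}$ operator algebra no such normalization exists, so the control must be extracted from the bounded-norm window $M$ (and the corresponding bound on the inverses) in which the standard homotopy lives, together with a bound on the number of conjugation moves achieved after sufficient stabilization. Verifying that the invertibility threshold, the norm bounds along the conjugation and rotation segments, and the resulting length $\ell$ are all governed by one function—so that a single $\varphi$ serves every pair—is the step requiring genuine care; by contrast, checking that each $E_t$ is an idempotent (resp.\ each $U_t$ an invertible), that the endpoints agree after the $\mathrm{diag}(\,\cdot\,,I_k,0)$ (resp.\ $\mathrm{diag}(\,\cdot\,,I_k)$) stabilization, and that continuity holds at the partition nodes, is routine.
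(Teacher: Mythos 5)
Your plan fails at precisely the step you postpone, and it is not repairable within your framework: the modulus $\varepsilon/\ell$ obtained by arc-length reparametrization cannot be made independent of the chosen homotopy. Each conjugation segment $t\mapsto z_i(t)\,p_{s_{i-1}}\,z_i(t)^{-1}$ joining $p_{s_{i-1}}$ to $p_{s_i}$ has length at least $\Vert p_{s_i}-p_{s_{i-1}}\Vert$, so after concatenation $\ell\geq\sum_{i}\Vert p_{s_i}-p_{s_{i-1}}\Vert$, and for the fine partitions your construction requires this sum is of the order of the total variation of the given path $(p_t)$. Total variation is not controlled by the norm window $M$, by the bounds on the $z_i^{-1}$, or by any other structural data: with $e$, $f$ and $M$ fixed, a homotopy of idempotents can oscillate with arbitrarily large (even infinite) total variation inside a fixed norm ball. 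For the same reason your claim that the difficulty is "automatic" in the $C^{*}$-setting is wrong — projections have norm one, but paths of projections still have unbounded length — and your proposed remedy, "a bound on the number of conjugation moves achieved after sufficient stabilization," does not exist: stabilization plays no role in a sequential concatenation, and the number $m$ of moves is dictated by the uncontrolled modulus of continuity of the given homotopy.

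What stabilization actually buys — and this is the key idea of the paper's proof that your plan is missing — is the room to perform all the elementary moves \emph{simultaneously} in orthogonal diagonal blocks rather than one after another. The paper samples the given homotopy at $t_0<\cdots<t_k$ with $\Vert e_{t_i}-e_{t_{i-1}}\Vert\leq\delta$ and joins $\mathrm{diag}(e,I_k,0_k)$ to $\mathrm{diag}(f,I_k,0_k)$ by exactly five homotopy segments: a permutation rotation to $\mathrm{diag}(e_{t_0},1,0,\dots,1,0)$; a blockwise simultaneous rotation to $\mathrm{diag}(e_{t_0},1-e_{t_1},e_{t_1},\dots,1-e_{t_k},e_{t_k})$; a \emph{single} conjugation homotopy to $\mathrm{diag}(e_{t_0},1-e_{t_0},e_{t_1},1-e_{t_1},\dots,e_{t_k})$, legitimate because these two idempotents differ in norm by at most $\delta$; then the reverse rotations and a final permutation. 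Each of the five segments is Lipschitz with constant controlled by the norm bound along the path, independently of $k$; the integer $k$, however large, inflates only the matrix size $N$, never the modulus. Nothing in your construction replaces the given wiggly path by such a short chain — that replacement is exactly what the parallelization accomplishes. (One caveat affecting the statement itself: no proof can make $\varphi$ literally independent of all data, since partitioning $[0,1]$ into roughly $1/\varphi(\varepsilon)$ steps forces $\Vert e-f\Vert\leq(1+1/\varphi(\varepsilon))\varepsilon$, so $\varphi$ must depend at least on a norm bound for the idempotents along the homotopy; this is the role of the constant $\alpha_N$ in Proposition \ref{prop 2.19}, and it is with such norm bounds that the lemma is used later. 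The norm dependence you flag is thus real but benign; the dependence on the number of moves is what sinks your argument.)
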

\begin{proof}
Let us prove the property in the case of idempotents, the case of invertibles being similar.
Without loss of generality, we suppose $n=1$.

(i) Recall from proposition 4.3.3 and proposition 3.4.3 in \cite{Blackadar} that if $e$ and $f$ are idempotents in $A$, and there exists $0<\delta<\frac{1}{\Vert 2e-1\Vert}$ such that $\Vert e-f\Vert\leq\delta$, then $f=z^{-1}ez$ for some invertible $z$ in $A$ with $\Vert z-1\Vert<1$. Hence there exists $a\in A$ with $\Vert a\Vert<log 2$ such that $z=exp(a)$. Considering the homotopy $(e_{t})_{t\in[0,1]}=(exp(ta)\cdot e\cdot exp(-ta))_{t\in[0,1]}$ between $e$ and $f$, we see that there exists a map $\varphi_{1}: (0,\infty)\rightarrow (0,\infty)$ such that $\Vert e_{s}-e_{t}\Vert\leq\varepsilon$ when $\vert s-t\vert\leq\varphi_{1}(\varepsilon)$ for any $\varepsilon>0$ and any $s, t\in [0,1]$.

(ii) For $t\in [0,1]$, let $c_{t}=cos\frac{\pi t}{2}$ and $s_{t}=sin\frac{\pi t}{2}$. Define
$$E_{t}=\begin{pmatrix}e &0\\0 &0\end{pmatrix}+\begin{pmatrix}c_{t} & -s_{t}\\s_{t} &c_{t}\end{pmatrix}\begin{pmatrix}1-e&0\\0&0\end{pmatrix}\begin{pmatrix}c_{t} & s_{t}\\-s_{t} &c_{t}\end{pmatrix}$$
in $M_{2}(A)$. Then we know that $(E_{t})_{t\in[0,1]}$ is a homotopy of idempotents between $diag(1,0)$ and $diag(e,1-e)$. Also, there exists a map $\varphi_{2}: (0,\infty)\rightarrow (0,\infty)$ such that $\Vert E_{s}-E_{t}\Vert\leq\varepsilon$ when $\vert s-t\vert\leq\varphi_{2}(\varepsilon)$ for any $\varepsilon>0$ and any $s, t\in [0,1]$.

(iii) In the general case, let $(e_{t})_{t\in [0,1]}$ be a homotopy of idempotents between $e$ and $f$, and let $0=t_{0}<t_{1}<\cdots<t_{k}=1$ be such that 
$$\Vert e_{t_{i}}-e_{t_{i-1}}\Vert\leq\delta, \text{ for }i=1,\cdots,k.$$ 
Then we have the following sequence of homotopies of idempotents in $M_{2k+1}(A)$ in which the first and last homotopies are conjugated by some permutation matrices:

$h_{0}\overset{h^{0}_{t}}{\sim} h_{1}\overset{h^{1}_{t}}{\sim} h_{2}\overset{h^{2}_{t}}{\sim} h_{3}\overset{h^{3}_{t}}{\sim} h_{4}\overset{h^{4}_{t}}{\sim} h_{5}$, where

$h_{0}=diag(e_{t_{0}}, I_{k},0_{k})$,

$h_{1}=diag(e_{t_{0}},1,0,\cdots,1,0)$,

$h_{2}=diag(e_{t_{0}},1-e_{t_{1}},e_{t_{1}},\cdots,1-e_{t_{k}},e_{t_{k}})$,

$h_{3}=diag(e_{t_{0}},1-e_{t_{0}},e_{t_{1}},1-e_{t_{1}},\cdots,e_{t_{k-1}},1-e_{t_{k-1}},e_{t_{k}})$,

$h_{4}=diag(1,0,\cdots,1,0,e_{t_{k}})$,

$h_{5}=diag(e_{t_{k}},I_{k},0_{k})$.

If we let $\varphi=min\{\varphi_{1},\varphi_{2}\}$, then the result is obtained from case (i) and case (ii). Indeed, the fact that $\Vert h_{3}-h_{2}\Vert\leq\delta$ implies that for every $m\in\{0,4\}$, there are homotopies $(h^{m}_{t})_{t\in[0,1]}$ between $h_{m}$ and $h_{m+1}$ such that $\Vert h^{m}_{s}-h^{m}_{t}\Vert\leq\varepsilon$ when $\vert s-t\vert\leq\varphi(\varepsilon)$ for any $\varepsilon>0$ and any $s, t\in [0,1]$.

\end{proof}

In the next lemma, the injectivity of $\Phi^{\mathcal{A}}_{*}$ follows immediately from the above Lemma \ref{lemma 3.16}, and $\Phi^{\mathcal{A}}_{*}$ is clearly surjective. Hence the following result is  obtained:
\begin{lemma}\label{lemma1} 
Let $\mathcal{A}=(A_{i})_{i\in I}$ be a family of unital $L^{p}$ operator algebras. Let 
$$\Phi^{\mathcal{A}}_{*}: K_{*}\Big(\prod\limits_{i\in I}\big( \mathscr{K}(\ell^{p})\otimes A_{i}\big)\Big)\longrightarrow\prod\limits_{i\in I} K_{*}\big(\mathscr{K}(\ell^{p})\otimes A_{i}\big)\cong\prod\limits_{i\in I}K_{*}(A_{i})$$ be the homomorphism induced on the $j$-th factor by the projection
$$\prod\limits_{i\in I}\big(\mathscr{K}(\ell^{p})\otimes A_{i}\big)\rightarrow \mathscr{K}(\ell^{p})\otimes A_{j}.$$
Then $\Phi^{\mathcal{A}}_{*}$ is an isomorphism.
\end{lemma}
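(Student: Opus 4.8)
The plan is to prove the two inclusions separately: surjectivity of $\Phi^{\mathcal{A}}_{*}$ is direct and uses no homotopy control, whereas injectivity is exactly where the uniform continuity estimates of Lemma \ref{lemma 3.16} are indispensable. Throughout I would set $B_i:=\mathscr{K}(\ell^{p})\otimes A_i$ and freely invoke the Morita equivalences of Proposition \ref{prop2} (for $p\in(1,\infty)$) and Proposition \ref{prop 2.17} (for $p=1$) to identify each $K_{*}(B_i)$ with $K_{*}(A_i)$, so that the target of $\Phi^{\mathcal{A}}_{*}$ is literally $\prod_{i}K_{*}(B_i)$ and the $j$-th factor map is the coordinate projection $\prod_{i}B_i\to B_j$. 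I would also record the bookkeeping identity $M_n(\prod_i B_i)=\prod_i M_n(B_i)$ and the fact that the reindexing isometry $\bigoplus_{1}^{n}\ell^{p}\cong\ell^{p}$ yields \emph{isometric} isomorphisms $M_n(B_i)\cong B_i$, which let me pass to a uniform matrix level without changing norms.

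For surjectivity, take a family $([x_i])_{i\in I}$ with $[x_i]\in K_{*}(B_i)$. In the even case I would choose for each $i$ an idempotent $E^{i}$ (with prescribed scalar part) in a matrix algebra over $B_i^{+}$ representing $[x_i]$, and in the odd case an invertible $U^{i}$ together with its inverse. Applying the isometric stability isomorphisms puts all the $E^{i}$ (resp. $U^{i}$) at one common matrix level; the only thing to verify is that the resulting family is bounded, so that $(E^{i})_{i}$ (resp. $(U^{i})_{i}$, $((U^{i})^{-1})_{i}$) defines a single idempotent (resp. invertible) over $\prod_i B_i$. Its class is then sent by $\Phi^{\mathcal{A}}_{*}$ to $([x_i])_i$. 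Crucially, this direction never requires a homotopy, which is the structural contrast with injectivity.

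For injectivity, suppose $[X]\in K_{*}(\prod_i B_i)$ satisfies $\Phi^{\mathcal{A}}_{*}([X])=0$, so every coordinate class vanishes. Representing $[X]$ by idempotents (resp. invertibles) in matrix algebras over the unitization of $\prod_i B_i$ and reading off the $i$-th coordinate, the vanishing says that for each $i$ the two coordinate representatives are homotopic (after a stabilization) in the corresponding algebra over $B_i^{+}$. The difficulty is that a separately chosen homotopy for each $i$ need not assemble into a \emph{norm-continuous} path into $\prod_i B_i$: one needs the homotopies to be equicontinuous in the parameter, uniformly in $i$. This is precisely the output of Lemma \ref{lemma 3.16}, whose modulus $\varphi$ (resp. $\phi$) is a single function independent of the algebra. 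Applying it in each $B_i$ — after the uniform amplification by $\mathrm{diag}(\,\cdot\,,I_k,0)$ furnished by that lemma — produces homotopies $(E^{i}_{t})_{t\in[0,1]}$ with $\Vert E^{i}_{s}-E^{i}_{t}\Vert\leq\varepsilon$ whenever $\vert s-t\vert\leq\varphi(\varepsilon)$, with $\varphi$ the \emph{same} for all $i$. Hence $t\mapsto (E^{i}_{t})_{i}$ is a norm-continuous path in the product, exhibiting $[X]=0$.

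The main obstacle is exactly this uniformity in the injectivity step: for an infinite family the per-factor homotopies must glue to one path that is continuous for the norm of $\prod_i B_i$, and that forces a common modulus of continuity across all the $A_i$ — which is the sole reason Lemma \ref{lemma 3.16} is proved beforehand and the reason the naive argument fails for general products of Banach algebras. The remaining points, namely the unitization and matrix-size bookkeeping for the product and the verification that the isometric stability isomorphisms respect scalar parts, are routine and I would state them without elaboration.
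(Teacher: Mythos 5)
Your proposal is correct and takes essentially the same route as the paper, which proves this lemma in one line: injectivity ``follows immediately from Lemma~\ref{lemma 3.16}'' (precisely your point that the uniform modulus $\varphi$, independent of the algebra, lets the per-factor homotopies glue into a single norm-continuous path in $\prod_i M_n(\widetilde{B_i})$) and surjectivity is ``clear'' (your assembly of representatives at a common matrix level via the isometric identifications $M_n\big(\mathscr{K}(\ell^p)\otimes A_i\big)\cong \mathscr{K}(\ell^p)\otimes A_i$). Your write-up is in fact more explicit than the paper's, including in flagging the uniform-boundedness point in the surjectivity step that the paper passes over silently.
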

\begin{remark}
Observe that $\mathscr{K}(\ell^{p})\otimes\mathscr{K}(\ell^{p})\otimes A_{i}$ is isometrically isomorphic to $\mathscr{K}(\ell^{p})\otimes A_{i}$ for each $i\in\mathbb{N}$, thus $\Phi^{\mathcal{A}}_{*}$ is an isometric isomorphism.
\end{remark}
As a consequence of this lemma, we have the following important proposition:
\begin{proposition}\label{prop 3}
Let $\Gamma$ be a discrete group and let $\mathcal{A}=(A_{i})_{i\in\mathbb{N}}$ be a family of $\Gamma$-$L^{p}$ operator algebras. Suppose $A_{i}\otimes\mathscr{K}(\ell^{p})$ is equipped with the diagonal action, the action of $\Gamma$ on $\mathscr{K}(\ell^{p})$ is trivial. Let 
$$\Phi^{\Gamma,\mathcal{A}}_{*}:K_{*}\bigg(B^{p}_{L}\Big(P_{d}(\Gamma),\prod\limits_{i\in I}\big(\mathscr{K}(\ell^{p})\otimes A_{i}\big)\Big)^{\Gamma}\bigg)\longrightarrow \prod\limits_{i\in I}K_{*}\Big(B^{p}_{L}\big(P_{d}(\Gamma),\mathscr{K}(\ell^{p})\otimes A_{i}\big)^{\Gamma}\Big)\cong\prod\limits_{i\in I}K_{*}\Big(B^{p}_{L}\big(P_{d}(\Gamma),A_{i}\big)^{\Gamma}\Big)$$
be the homomorphism induced on the $j$-th factor by the projection
$$\prod\limits_{i\in I}\Big(\mathscr{K}(\ell^{p})\otimes A_{i}\Big)\rightarrow \mathscr{K}(\ell^{p})\otimes A_{j}.$$
Then $\Phi^{\Gamma,\mathcal{A}}_{*}$ is an isomorphism.
\end{proposition}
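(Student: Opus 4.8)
The plan is to mirror the proof of Lemma~\ref{lemma1}, upgrading it from the algebras $\mathscr{K}(\ell^{p})\otimes A_{i}$ to the localization algebras $B^{p}_{L}(P_{d}(\Gamma),A_{i})^{\Gamma}$. First I would use the Morita equivalence (Proposition~\ref{prop2} and Proposition~\ref{prop 2.17}), together with the Morita identification $K_{*}(B^{p}_{L}(P_{d}(\Gamma),A)^{\Gamma})\cong K_{*}(B^{p}_{L}(P_{d}(\Gamma),\mathscr{K}(\ell^{p})\otimes A)^{\Gamma})$ for the coefficients, to make $\Phi^{\Gamma,\mathcal{A}}_{*}$ the map on $K$-theory induced by the coordinate projections $\prod_{i}(\mathscr{K}(\ell^{p})\otimes A_{i})\to\mathscr{K}(\ell^{p})\otimes A_{j}$, and to realize $B^{p}_{L}(P_{d}(\Gamma),\prod_{i}(\mathscr{K}(\ell^{p})\otimes A_{i}))^{\Gamma}$ as the subalgebra of $\prod_{i}B^{p}_{L}(P_{d}(\Gamma),A_{i})^{\Gamma}$ consisting of those families $(f_{i})_{i}$ whose propagation decays and whose modulus of continuity in the localization variable $t$ are uniform in $i$. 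With this picture, proving that $\Phi^{\Gamma,\mathcal{A}}_{*}$ is an isomorphism amounts to establishing surjectivity and injectivity separately, exactly as for $\Phi^{\mathcal{A}}_{*}$.

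For surjectivity, given a class $[x_{i}]$ in each factor $K_{*}(B^{p}_{L}(P_{d}(\Gamma),A_{i})^{\Gamma})$, I would represent it by an idempotent (resp.\ invertible) $x_{i}$ over the stabilized localization algebra and then \emph{uniformize}: reparametrizing each $x_{i}$ in the variable $t$ by an increasing homeomorphism of $[0,\infty)$ is a homotopy, hence does not change $[x_{i}]$, and allows me to arrange a common rate of propagation decay and a common modulus of continuity; absorbing the matrix sizes into the stabilizing copy of $\mathscr{K}(\ell^{p})$ keeps the norms bounded uniformly in $i$. The resulting uniform family is a genuine element of $B^{p}_{L}(P_{d}(\Gamma),\prod_{i}(\mathscr{K}(\ell^{p})\otimes A_{i}))^{\Gamma}$ mapping to $([x_{i}])_{i}$ under $\Phi^{\Gamma,\mathcal{A}}_{*}$, so surjectivity is clear, just as in Lemma~\ref{lemma1}.

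For injectivity, suppose $(x_{i})_{i}$ is such a uniform family with $\Phi^{\Gamma,\mathcal{A}}_{*}([(x_{i})_{i}])=0$, i.e.\ each $x_{i}$ is null-homotopic in $B^{p}_{L}(P_{d}(\Gamma),A_{i})^{\Gamma}$. The decisive input is that the map $\varphi$ produced in Lemma~\ref{lemma 3.16} is \emph{universal}: inspection of its proof shows that $\varphi_{1}$ (from the conjugation homotopy $\exp(sa)\,x\,\exp(-sa)$ with $\lVert a\rVert<\log 2$) and $\varphi_{2}$ (from the rotation homotopy) depend only on absolute constants, not on the algebra. Consequently each $x_{i}$ admits a null-homotopy of idempotents (resp.\ invertibles) whose modulus of continuity is bounded by the \emph{same} $\varphi(\varepsilon)$ for all $i$. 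Patching these coordinatewise produces a single norm-continuous path of idempotents (resp.\ invertibles) in $\prod_{i}B^{p}_{L}(P_{d}(\Gamma),A_{i})^{\Gamma}$ witnessing $[(x_{i})_{i}]=0$, which is precisely how injectivity of $\Phi^{\mathcal{A}}_{*}$ is deduced from Lemma~\ref{lemma 3.16} in Lemma~\ref{lemma1}.

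The main obstacle — and the only point where the localization setting genuinely goes beyond Lemma~\ref{lemma1} — is to verify that the patched homotopy lands inside the \emph{uniform} subalgebra $B^{p}_{L}(P_{d}(\Gamma),\prod_{i}(\mathscr{K}(\ell^{p})\otimes A_{i}))^{\Gamma}$ rather than merely in the product $\prod_{i}B^{p}_{L}(P_{d}(\Gamma),A_{i})^{\Gamma}$. Concretely, one must control two continuous parameters at once: the homotopy variable $s$ (handled by the universal $\varphi$) and the localization variable $t$, along which the propagation must decay and the operators remain locally compact, uniformly in $i$. I would address this by performing the uniformizing reparametrization of the previous paragraph \emph{before} invoking Lemma~\ref{lemma 3.16}, so that the conjugators $\exp(sa_{i})$ and the rotation homotopies inherit a common rate of decay in $t$; combined with the universality of $\varphi$, this should guarantee that the assembled path satisfies the uniform-continuity and propagation-decay conditions defining the product-coefficient localization algebra. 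Once this compatibility is checked, both halves of the argument close and $\Phi^{\Gamma,\mathcal{A}}_{*}$ is an isomorphism.
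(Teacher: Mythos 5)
Your proposal takes a genuinely different route from the paper, and as written it has gaps I do not believe can be repaired by the devices you describe. The decisive one is your identification of $B^{p}_{L}\big(P_{d}(\Gamma),\prod_{i}(\mathscr{K}(\ell^{p})\otimes A_{i})\big)^{\Gamma}$ with the subalgebra of $\prod_{i}B^{p}_{L}(P_{d}(\Gamma),A_{i})^{\Gamma}$ of families with uniform propagation decay and uniform equicontinuity in $t$. That identification is not correct: by definition, an operator with coefficients in $B:=\prod_{i}(\mathscr{K}(\ell^{p})\otimes A_{i})$ must have every matrix entry $T_{y,z}$ in $B\otimes\mathscr{K}_{\Gamma}$, and the spatial tensor product does not commute with infinite products, so $B\otimes\mathscr{K}_{\Gamma}\subsetneq\prod_{i}\big((\mathscr{K}(\ell^{p})\otimes A_{i})\otimes\mathscr{K}_{\Gamma}\big)$ (a slice-map argument shows, e.g., that a family of elementary tensors $b_{i}\otimes e_{ii}$ with ``divergent'' rank-one directions stays at distance $1$ from the smaller algebra); moreover $E$-local compactness forces the set of non-vanishing entries over a compact set to be finite \emph{uniformly in} $i$. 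Neither constraint appears in your ``uniform subalgebra'', and neither can be produced by reparametrizing the $t$-variable, so your surjectivity argument builds elements of the wrong algebra. A second, independent gap is norm control: Lemma \ref{lemma 3.16} provides a universal modulus of continuity $\varphi$, but the norms along the homotopy it constructs are bounded only in terms of the norms along the \emph{given} homotopy (the stages $h_{2},h_{3}$ pass through the snapshots $e_{t_{j}}$ of the original path). For a family $(x_{i})_{i}$ whose null-homotopies have norms growing with $i$ — which cannot be ruled out for $L^{p}$ operator algebras, where there is no analogue of replacing idempotents by norm-one projections — the patched path is unbounded and lies in no matrix algebra over the product. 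This is exactly the phenomenon that forces the paper to track norms via control pairs $(\lambda,h)$ in the quantitative setting (Proposition \ref{prop 2.19}, Lemma \ref{lemma 2}); plain coordinatewise patching has no mechanism for it. A telling symptom: your argument nowhere uses that $P_{d}(\Gamma)$ is a finite-dimensional, cocompact, proper $\Gamma$-complex, yet commutation of localization-algebra $K$-theory with infinite products of coefficients is a finiteness statement, not a formal consequence of the definitions.

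The paper's proof is structured precisely to avoid these issues: writing $Z_{0},\dots,Z_{n}$ for the skeleta of $P_{d}(\Gamma)$, it proves that $\Phi^{Z_{j}}_{*}$ is an isomorphism by induction on $j$. For $j=0$ the space is a finite union of orbits $\Gamma/F$ with $F$ finite; a restriction isomorphism identifies $K_{*}\big(B^{p}_{L}(\Gamma/F,B)^{\Gamma}\big)$ with $K_{*}(B\rtimes F)$, finiteness of $F$ gives $\big(\prod_{i}B_{i}\big)\rtimes F\cong\prod_{i}(B_{i}\rtimes F)$, and only then is Lemma \ref{lemma1} invoked — for honest algebras, with no localization variable in sight. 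The inductive step uses the long exact sequence coming from $0\to C_{0}(Z_{j}\setminus Z_{j-1})\to C_{0}(Z_{j})\to C_{0}(Z_{j-1})\to 0$, Bott periodicity for $Z_{j}\setminus Z_{j-1}\cong I_{j}\times C_{j}$, and the five lemma. If you want to pursue a direct argument, you would at minimum need a controlled (quantitative) version of your patching step in the spirit of Lemma \ref{lemma 2}, and you would still have to resolve the tensor-entry and uniform local-compactness problems above; the Mayer--Vietoris route sidesteps all of them, at the cost of using cocompactness and finite dimension, which is exactly where those hypotheses enter.
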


\begin{proof}
Put $B_{i}=\mathscr{K}(\ell^{p})\otimes A_{i}$, $i\in I$. For any locally compact space $X$ equipped with an action of $\Gamma$, we define
$$\Phi^{X}_{*}:K_{*}\big(B^{p}_{L}(X,\prod\limits_{i\in I}B_{i})^{\Gamma}\big)\rightarrow\prod\limits_{i\in I}K_{*}\big(B^{p}_{L}(X,B_{i})^{\Gamma}\big).$$
The homomorphism induced by the projection on the $j$-th factor is
$$\Phi^{X}_{j,*}:K_{*}\big(B^{p}_{L}(X,\prod\limits_{i\in I}B_{i})^{\Gamma}\big)\rightarrow K_{*}\big(B^{p}_{L}(X,B_{j})^{\Gamma}\big).$$
Let $Z_{0},\cdots,Z_{n}$ be the skeleton decomposition of $P_{d}(\Gamma)$, then $Z_{j}$ is a locally finite simplicial complex of dimension $j$, and endowed with a proper, cocompact and type preserving action of $\Gamma$.

Next, we prove that $\Phi^{Z_{j}}_{*}$ is an isomorphism by induction on $j$.

(i) For $j=0$, the $0$-skeleton $Z_{0}$ is a finite union of orbits, thus it suffices to prove that $\Phi^{\Gamma/F}_{*}$ is an isomorphism when $F$ is a finite subgroup of $\Gamma$. For any $\Gamma$-$L^{p}$ operator algebra $B$, Let $\chi_{0}$ be the charateristic map of ${F}$ in $\Gamma/F$, and let $\pi$ be a representation of $C_{0}(\Gamma/F)$ in $E_{d}$. Then $E_{d_{0}}=\pi(\chi_{0})\cdot E_{d}$ is stable by the action of group $F$ and by the endmorphism of a bounded linear operator $T$. The element restricted on  $E_{d_{0}}$ defines an element of $K_{*}\big(B^{p}_{L}(\mathbb{C},B)^{F}\big)$ and there is a natural restriction isomorphism
$$R^{B}_{F,\Gamma}:K_{*}\big(B^{p}_{L}(\Gamma/F,B)^{\Gamma}\big)\rightarrow K_{*}\big(B^{p}_{L}(\mathbb{C},B)^{F}\big)\cong K_{*}(B\rtimes F).$$
By naturality, we obtain the following commutative diagram:
$$\begin{CD}
K_{*}\big(B^{p}_{L}(\Gamma/F,\prod\limits_{i\in I}B_{i})^{\Gamma}\big)@>{\Phi^{\Gamma/F}_{j,*}}>>K_{*}\big(B^{p}_{L}(\Gamma/F, B_{j})^{\Gamma}\big)
\\@V{R^{\prod\limits_{i\in I}B_{i}}_{F,\Gamma}}VV@VV{R^{B_{j}}_{F,\Gamma}}V\\
K_{*}(\prod\limits_{i\in I}B_{i}\rtimes F)@>>>K_{*}(B_{j}\rtimes F),
\end{CD}$$
where the bottom row is induced by the homomorphism 
$$\prod\limits_{i\in I}B_{i}\rtimes F\rightarrow B_{j}\rtimes F$$ 
determined by the projection on the $j$-th factor $\prod\limits_{i\in I}B_{i}\rightarrow B_{j}$. Since $F$ is finite, we see that $\prod\limits_{i\in I}B_{i}\rtimes F\cong\big(\prod\limits_{i\in I}B_{i}\big)\rtimes F$. Applying Lemma \ref{lemma1}, we have an isomorphism
$$K_{*}\big(\big(\prod\limits_{i\in I}B_{i}\big)\rtimes F)\big)\cong K_{*}\big(\prod\limits_{i\in I}B_{i}\rtimes F\big)\rightarrow\prod\limits_{i\in I}K_{*}(B_{i}\rtimes F).$$
Hence $\Phi^{\Gamma/F}_{*}$ is an isomorphism.

(ii) Suppose $\Phi^{Z_{j-1}}_{*}$ is an isomorphism, and it remains to prove that $\Phi^{Z_{j}}_{*}$ is an isomorphism. The short exact sequence
 $$0\rightarrow C_{0}(Z_{j}\backslash Z_{j-1})\rightarrow C_{0}(Z_{j})\rightarrow C_{0}(Z_{j-1})\rightarrow 0$$
induces a natural long exact sequence
$$\rightarrow K_{*}\big(B^{p}_{L}(Z_{j-1},\cdot)^{\Gamma}\big)\rightarrow K_{*}\big(B^{p}_{L}(Z_{j},\cdot)^{\Gamma}\big)\rightarrow K_{*}\big(B^{p}_{L}(Z_{j}\backslash Z_{j-1},\cdot)^{\Gamma}\big)\rightarrow K_{*+1}\big(B^{p}_{L}(Z_{j-1},\cdot)^{\Gamma}\big)\rightarrow$$
and hence by naturality, we obtain a commutative diagram:
$$  \xymatrix@C=0.3cm@R=1cm{
   K_{*}\big(B^{p}_{L}(Z_{j-1},B)^{\Gamma}\big)\ar[r] \ar[d]_{\Phi^{Z_{j-1}}_{*}}& K_{*}\big(B^{p}_{L}(Z_{j},B)^{\Gamma}\big)\ar[r]\ar[d]_{\Phi^{Z_{j}}_{*}}& K_{*}\big(B^{p}_{L}(Z_{j}\backslash Z_{j-1},B)^{\Gamma}\big)\ar[r]\ar[d]_{\Phi^{Z_{j}\backslash Z_{j-1}}_{*}}& K_{*+1}\big(B^{p}_{L}(Z_{j-1},B)^{\Gamma}\big)\ar[d]_{\Phi^{Z_{j-1}}_{*+1}}\\
   \prod\limits_{i\in I}K_{*}\big(B^{p}_{L}(Z_{j-1})^{\Gamma}\big)\ar[r]& \prod\limits_{i\in I}K_{*}\big(B^{p}_{L}(Z_{j})^{\Gamma}\big)\ar[r]& \prod\limits_{i\in I}K_{*}\big(B^{p}_{L}(Z_{j}\backslash Z_{j-1})^{\Gamma}\big)\ar[r]& \prod\limits_{i\in I}K_{*+1}\big(B^{p}_{L}(Z_{j-1})^{\Gamma}\big),
  }$$
where $\prod\limits_{i\in I}B_{i}$ and $\prod\limits_{i\in I}K_{*}\big(B^{p}_{L}(Z_{j},B_{i})^{\Gamma}\big)$  are denoted by $B$ and $\prod\limits_{i\in I}K_{*}\big(B^{p}_{L}(Z_{j})^{\Gamma}\big)$ respectively.
We denote by $I_{j}$ the interior of the standard $j$-simplex. Since the action of $\Gamma$ is type preserving, then 
$$Z_{j}\backslash Z_{j-1}\cong I_{j}\times C_{j},$$
where $C_{j}$ is the set of center of $j$-simplices of $Z_{j}$, $\Gamma$ acts trivially on $I_{j}$. Together with the Bott periodicity, we have a commutative diagram:
$$\begin{CD}
K_{*}\big(B^{p}_{L}(Z_{j}\backslash Z_{j-1},\prod\limits_{i\in I}B_{i})^{\Gamma}\big)@>>>K_{*+1}\big(B^{p}_{L}(C_{j},\prod\limits_{i\in I}B_{i})^{\Gamma}\big)
\\@V{\Phi^{Z_{j}\backslash Z_{j-1}}_{*}}VV@VV{\Phi^{C_{j}}_{*+1}}V\\
\prod\limits_{i\in I}K_{*}\big(B^{p}_{L}(Z_{j}\backslash Z_{j-1},B_{i})^{\Gamma}\big)@>>>\prod\limits_{i\in I}K_{*+1}\big(B^{p}_{L}(C_{j},B_{i})^{\Gamma}\big).
\end{CD}$$
Finally, $\Phi^{C_{j}}_{*}$ is an isomorphism obtained from case (i), and thus $\Phi^{Z_{j}\backslash Z_{j-1}}_{*}$ is an isomorphism.
According to the induction hypothesis and the five lemma, we know that $\Phi^{Z_{j}}_{*}$ is an isomorphism.

\end{proof}

\section{Persistence approximation property}
In this section, we introduce the persistence approximation property for filtered $L^{p}$ operator algebras. In the case of a reduced crossed product of an $L^{p}$ operator algebra by a finitely generated group, we find a sufficient condition for the persistence approximation property.

Let $A$ be a filtered $L^{p}$ operator algebra. Applying Proposition \ref{prop 2.12} (i), we see that for any $\varepsilon\in (0,\frac{1}{20})$ and any $N\geq 1$, there exists a surjective map
$$\lim\limits_{r>0}K_{*}^{\varepsilon,r,N}(A)\rightarrow K^{N}_{*}(A)$$
induced by a family of relaxation of control maps $(\iota_{*})_{r>0}$. Moreover, if $\varepsilon>0$ is small enough, then for any $r>0$, any $N\geq 1$ and any $[x]\in K^{\varepsilon,r,N}_{*}(A)$, there exist positive numbers $\varepsilon'\in [\varepsilon,\frac{1}{20})$ independent of $x$ and $A$, $r'\geq r$ and $N'\geq N$ such that
$$\iota_{*}([x])=0\text{ in }K_{*}(A) \Rightarrow\iota^{\varepsilon', r',N'}_{*}([x])=0\text{ in } K^{\varepsilon', r', N'}_{*}(A).$$
However, we may wonder whether this $r'$ depends on $x$, in other words whether the family $\big(K^{\varepsilon,r,N}_{*}(A)\big)_{0<\varepsilon<\frac{1}{20},r>0,N\geq 1}$ has a persistence approximation for $K_{*}(A)$ in the following sense: for any sufficiently small $\varepsilon\in (0,\frac{1}{20})$, any $r>0$ and any $N\geq 1$, there exist $\varepsilon'\in [\varepsilon,\frac{1}{20})$, $r'\geq r$ and $N'\geq N$ such that for any $[x]\in K^{\varepsilon,r,N}_{*}(A)$, we have 
$$\iota^{\varepsilon',r',N'}_{*}([x])\neq 0\text{ in }K^{\varepsilon', r', N'}_{*}(A)\Rightarrow\iota_{*}([x])\neq 0\text{ in }K_{*}(A).$$
Therefore, we consider the following statement: For a filtered $L^{p}$ operator algebra $A$ and positive numbers $\varepsilon$, $r$ and $N\geq 1$, there exist $\varepsilon'$ in $[\varepsilon,\frac{1}{20})$, $r'\geq r$ and $N'\geq N$:

$\mathcal{PA_{*}}(A, \varepsilon,\varepsilon',r,r',N,N')$: $\text{ for any }[x]\in K^{\varepsilon,r,N}_{*}(A)$, $$\iota_{*}([x])=0\text{ in }K_{*}(A) \Rightarrow \iota^{\varepsilon',r',N'}_{*}([x])=0\text{ in } K^{\varepsilon',r',N'}_{*}(A).$$
\subsection{The case of crossed products.}
\begin{theorem}\label{theorem 4.1}
Let $\Gamma$ be a finitely generated group, and let $A$ be a $\Gamma$-$L^{p}$ operator algebra. Assume that
\begin{itemize}
\item $\Gamma$ admits a cocompact universal example for proper actions;
\item for any positive integer $\mathscr{N}$, there exists a non-decreasing function $\omega: [1,\infty)\rightarrow [1,\infty)$ such that the $\mathscr{N}$-$L^{p}$ Baum-Connes assembly map for $\Gamma$ with coefficients in 
$$\ell^{\infty}(\mathbb{N}, \mathscr{K}(\ell^{p})\otimes A)$$
is $\omega$-surjective;
\item the $L^{p}$ Baum-Connes assembly map for $\Gamma$ with coefficients in $A$ is injective.
\end{itemize}
Then for any $N\geq 1$, there exists a universal constant $\lambda_{PA}\geq 1$ such that for any $\varepsilon$ in $(0,\frac{1}{20\lambda_{PA}})$ and any $r>0$, there exist $r'\geq r$ and $N'\geq N$ such that $\mathcal{PA_{*}}(A\rtimes\Gamma, \varepsilon,\lambda_{PA}\varepsilon,r,r',N,N')$ holds.
\end{theorem}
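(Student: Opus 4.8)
The plan is to translate the three qualitative hypotheses into the controlled statements $QI_{A,*}$ and $QS_{A,*}$ recorded above, and then to run a lifting-and-descending argument through the quantitative $L^{p}$ assembly map at a single Rips scale that the cocompactness hypothesis makes available. First I would use the cocompact universal example: there is a scale $d_{0}$ at which the inductive system stabilizes, so the structure maps $i_{d,d',*}$ are isomorphisms for $d'\geq d\geq d_{0}$ and $\mu_{A,*}$ is computed by the evaluation map out of $B^{p}_{L}(P_{d_{0}}(\Gamma),A)^{\Gamma}$, which is injective by the third hypothesis. From the second hypothesis and Theorem \ref{the 3} I obtain a polynomial $\rho$ and parameters $r'\geq r$, $N'\geq N$ with $QS_{A,*}(d_{0},\varepsilon,\rho(9N\omega(4N))\varepsilon,r,r',N,N')$ (enlarging the scale furnished by Theorem \ref{the 3} to $d_{0}$ if necessary), and I set $\lambda_{PA}=\rho(9N\omega(4N))$; from the injectivity hypothesis, through Theorem \ref{the 2} applied to the coefficient algebra $\ell^{\infty}(\mathbb{N},\mathscr{K}(\ell^{p})\otimes A)$, I obtain the quantitative injectivity statements $QI_{A,*}$.

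For a single class the computation runs as follows. Given $[x]\in K^{\varepsilon,r,N}_{*}(A\rtimes\Gamma)$ with $\iota_{*}([x])=0$, the statement $QS_{A,*}$ produces $[z]\in K^{N'}_{*}(B^{p}_{L}(P_{d_{0}}(\Gamma),A)^{\Gamma})$ with $\mu^{\lambda_{PA}\varepsilon,r',N',d_{0}}_{A,*}([z])=\iota^{\lambda_{PA}\varepsilon,r',9N'}_{*}([x])$. Writing $\overline{z}$ for the image of $[z]$ in $K_{*}(B^{p}_{L}(P_{d_{0}}(\Gamma),A)^{\Gamma})$, the compatibility identity \eqref{eq 1} gives $\mu^{d_{0}}_{A,*}(\overline{z})=\iota_{*}([x])=0$, so injectivity of $\mu_{A,*}$ forces $\overline{z}=0$. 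Since $K_{*}=\lim_{M}K^{M}_{*}$, this means $[z]=0$ in $K^{M}_{*}(B^{p}_{L}(P_{d_{0}}(\Gamma),A)^{\Gamma})$ for some $M\geq N'$; feeding this back through $\mu^{\lambda_{PA}\varepsilon,r',M,d_{0}}_{A,*}$ and matching parameters with \eqref{eq 2} yields $\iota^{\lambda_{PA}\varepsilon,r',9M}_{*}([x])=0$. The constants appearing here must be tracked through the holomorphic functional calculus bounds of Remark \ref{rmk 2.11} and the polynomial losses of Propositions \ref{prop 2.11}, \ref{prop 2.12} and \ref{prop 3.24}, so that the final control is exactly $\lambda_{PA}\varepsilon$.

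The hard part is uniformity: $r'$ and $N'$ must be fixed before $[x]$, whereas the integer $M$ produced above, namely the norm needed to realize the vanishing in $K_{*}$, a priori depends on $[x]$. To eliminate this dependence I would argue by contradiction in the manner of the proof of Theorem \ref{the 2}. If no uniform $(r',N')$ worked, I would take counterexamples $[x_{m}]\in K^{\varepsilon,r,N}_{*}(A\rtimes\Gamma)$ with $\iota_{*}([x_{m}])=0$ but $\iota^{\lambda_{PA}\varepsilon,r_{m},N_{m}}_{*}([x_{m}])\neq 0$ along $r_{m},N_{m}\to\infty$, and assemble them into a single class over $\ell^{\infty}(\mathbb{N},\mathscr{K}(\ell^{p})\otimes A)\rtimes\Gamma$ using the coordinate projections of \eqref{eq 5}. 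The product decomposition of the localization $K$-theory (Proposition \ref{prop 3}), together with $QI_{A,*}$ and the injectivity of $\mu_{\ell^{\infty}(\mathbb{N},\mathscr{K}(\ell^{p})\otimes A),*}$, then delivers one controlled vanishing valid in every coordinate, contradicting $\iota^{\lambda_{PA}\varepsilon,r_{m},N_{m}}_{*}([x_{m}])\neq 0$ for large $m$. This $\ell^{\infty}$-assembly step, rather than the individual lifting and descending, is where the cocompactness and the coefficientwise form of the hypotheses are genuinely used, and it is the main obstacle to a clean direct argument.
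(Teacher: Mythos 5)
Your overall architecture (per-class lifting through the quantitative assembly map, then uniformity by contradiction via an $\ell^{\infty}$-assembly of counterexamples) is the same as the paper's, but there is a genuine gap in the two places where you invoke injectivity of $\mu_{\ell^{\infty}(\mathbb{N},\mathscr{K}(\ell^{p})\otimes A),*}$. The theorem only assumes injectivity of the $L^{p}$ Baum--Connes assembly map with coefficients in $A$ itself. You cannot get $QI_{A,*}$ from Theorem \ref{the 2}, because the input of that theorem is precisely injectivity with coefficients in $\ell^{\infty}(\mathbb{N},\mathscr{K}(\ell^{p})\otimes A)$, which is not among the hypotheses; and since ordinary $K$-theory does not commute with $\ell^{\infty}$-products (only the controlled version does, Lemma \ref{lemma 2}), coordinatewise injectivity does not formally imply $\ell^{\infty}$-injectivity --- indeed Theorem \ref{the 2} says $\ell^{\infty}$-injectivity is \emph{equivalent} to the uniform statements $QI_{A,*}$, so invoking either one here is circular. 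The paper's proof is designed exactly to avoid this: cocompactness of the universal example converts injectivity of $\mu_{A,*}$ into a fixed-scale statement (there exist $d\leq d'$, independent of the class, with $\mu^{d}_{A,*}([z])=0\Rightarrow i_{d,d',*}([z])=0$, coefficients in $A$); the lifted class over $\ell^{\infty}$ is then decomposed via Proposition \ref{prop 3} (which \emph{is} an isomorphism of ordinary $K$-theory on the localization-algebra side), the fixed-scale injectivity is applied in each coordinate, and Proposition \ref{prop 3} is used again to reassemble the vanishing $i_{d,d',*}([z])=0$; the norm of the witnessing homotopy then supplies the uniform $N'$. Your first paragraph actually contains the stabilization ingredient needed for this, but your contradiction step never uses it and leans on the unavailable $\ell^{\infty}$-injectivity instead; supplying the coordinatewise-decompose-and-reassemble mechanism is precisely the missing step.

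There is also a quantitative error: your choice $\lambda_{PA}=\rho(9N\omega(4N))$ is too small to close the contradiction. Assembling the counterexamples $([x_{m}])$ into one class over $\ell^{\infty}(\mathbb{N},\mathscr{K}(\ell^{p})\otimes A)\rtimes\Gamma$ costs the control pair $(\lambda,h)$ of Lemma \ref{lemma 2}: the assembled class lives in $K^{\lambda_{N}\varepsilon,h_{\varepsilon,N}r,\lambda_{N}}_{*}$, so the surjectivity hypothesis and Proposition \ref{prop 3.24} are applied at norm parameter $\lambda_{N}$, not $N$, and the vanishing you eventually derive has control $\rho(9\lambda_{N}\omega(4\lambda_{N}))\varepsilon$. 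Since vanishing at a larger control does not imply vanishing at a smaller one, this does not contradict $\iota^{\lambda_{PA}\varepsilon,r_{m},N_{m}}_{*}([x_{m}])\neq 0$ when $\lambda_{PA}=\rho(9N\omega(4N))<\rho(9\lambda_{N}\omega(4\lambda_{N}))$. The paper sets $\lambda_{PA}=\rho(9\lambda_{N}\omega(4\lambda_{N}))$ for exactly this reason; you should do the same, after which the bookkeeping in your second and third paragraphs can be made consistent.
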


\begin{remark}
Here, the constant $\lambda_{PA}$ does not depend on $r$, but on the positive integer $N$. 
\end{remark}

\begin{proof}
Let $A$ be a $\Gamma$-$L^{p}$ operator algebra, and let $\Gamma$ admit a cocompact universal example for proper actions. Assume that for every positive integer $\mathscr{N}$, there exists a non-decreasing function $\omega$ such that the $\mathscr{N}$-$L^{p}$ Baum-Connes assembly map with coefficients in $\ell^{\infty}(\mathbb{N}, \mathscr{K}(\ell^{p})\otimes A)$
is $\omega$-surjective and the $L^{p}$ Baum-Connes assembly map with coefficients in $A$ is injective, then there exist positive numbers $d$ and $d'$ with $d\leq d'$ such that the following two conditions are satisfied:
\begin{itemize}
\item for every $\mathscr{N}\in\mathbb{N}$ and any $[z]$ in $K^{\mathscr{N}}_{*}\big(\ell^{\infty}(\mathbb{N}, \mathscr{K}(\ell^{p})\otimes A\big)\rtimes\Gamma)$, there exists $[x]$ in\\ $K^{\omega(\mathscr{N})}_{*}\Big(B^{p}_{L}\big(P_{d}(\Gamma),\ell^{\infty}(\mathbb{N}, \mathscr{K}(\ell^{p})\otimes A)\big)^{\Gamma}\Big)$ such that 
$$\mu^{\omega(\mathscr{N}),d}_{\ell^{\infty}(\mathbb{N}, \mathscr{K}(\ell^{p})\otimes A),*}([x])=[z]\text{ in }K^{\omega(\mathscr{N})\cdot\mathscr{N}}_{*}\big(\ell^{\infty}(\mathbb{N}, \mathscr{K}(\ell^{p})\otimes A)\rtimes\Gamma\big).$$

\item for any $[x]$ in $K_{*}\Big(B^{p}_{L}\big(P_{d}(\Gamma),A\big)^{\Gamma}\Big)$ such that $\mu^{d}_{A,*}([x])=0$, we have
$$i_{d,d',*}([x])=0\text{ in }K_{*}\Big(B^{p}_{L}\big(P_{d'}(\Gamma),A\big)^{\Gamma}\Big),$$
where $i_{d,d',*}:K_{*}\Big(B^{p}_{L}\big(P_{d}(\Gamma),A\big)^{\Gamma}\Big)\rightarrow K_{*}\Big(B^{p}_{L}\big(P_{d'}(\Gamma),A\big)^{\Gamma}\Big)$is induced by the inclusion $P_{d}(\Gamma)\hookrightarrow P_{d'}(\Gamma)$.
\end{itemize}
Fix such $d$ and $d'$, and let $\rho$ be as in Proposition \ref{prop 3.24}, pick $(\lambda,h)$ as in Lemma \ref{lemma 2} and put $\lambda_{PA}=\rho(9\lambda_{N}\omega(4\lambda_{N}))$. Assume that there exists $N\geq 1$ such that this statement does not hold. Then there exist 
\begin{itemize}
\item $\varepsilon\in (0,\frac{1}{20\lambda_{PA}})$ and $r>0$,

\item an unbounded increasing sequence $(r_{i})_{i\in\mathbb{N}}$ with $r_{i}\geq r$,

\item an unbounded increasing sequence $(N_{i})_{i\in\mathbb{N}}$ with $N_{i}\geq N$,

\item a sequence of elements $([x_{i}])_{i\in\mathbb{N}}$ with $[x_{i}]\in K^{\varepsilon,r,N}_{*}(A\rtimes\Gamma)$,

\end{itemize}
such that, for each $i\in\mathbb{N}$,
$$\iota_{*}([x_{i}])=0\text{ in }K_{*}(A\rtimes\Gamma)$$
 and 
$$\iota^{\lambda_{PA}\varepsilon,r_{i},N_{i}}_{*}([x_{i}])\neq 0\text{ in }K^{\lambda_{PA}\varepsilon,r_{i},N_{i}}_{*}(A\rtimes\Gamma).$$
Since 
$$\ell^{\infty}(\mathbb{N}, \mathscr{K}(\ell^{p})\otimes A)\rtimes\Gamma_{h_{\varepsilon,N}r}=\ell^{\infty}(\mathbb{N}, \mathscr{K}(\ell^{p})\otimes A\rtimes\Gamma_{h_{\varepsilon,N}r})$$
and according to Lemma \ref{lemma 2}, there exists an element
$$[x]\in K^{\lambda_{N}\varepsilon,h_{\varepsilon,N}r,\lambda_{N}}_{*}\big(\ell^{\infty}(\mathbb{N}, \mathscr{K}(\ell^{p})\otimes A)\rtimes\Gamma\big)$$
that maps to $\iota^{\lambda_{N}\varepsilon,h_{\varepsilon,N}r,\lambda_{N}}_{*}([x_{i}])$, for all integers $i$ under the composition
$$K_{*}^{\lambda_{N}\varepsilon,h_{\varepsilon,N}r,\lambda_{N}}\big(\ell^{\infty}(\mathbb{N}, \mathscr{K}(\ell^{p})\otimes A)\rtimes\Gamma\big)\rightarrow K_{*}^{\lambda_{N}\varepsilon,h_{\varepsilon,N}r,\lambda_{N}}(\mathscr{K}(\ell^{p})\otimes A\rtimes\Gamma)
\xrightarrow{\cong} K^{\lambda_{N}\varepsilon,h_{\varepsilon, N}r,\lambda_{N}}_{*}(A\rtimes\Gamma),$$
where the first map is induced by the $j$-th projection
\begin{equation}\label{eq 6}
\ell^{\infty}(\mathbb{N}, \mathscr{K}(\ell^{p})\otimes A)\rightarrow \mathscr{K}(\ell^{p})\otimes A
\end{equation}
and the isomorphism is the Morita equivalence of Proposition \ref{prop2} and Proposition \ref{prop 2.17}. Note that $\iota^{\lambda_{N}}_{*}([x])$ is in $K^{4\lambda_{N}}_{*}\big(\ell^{\infty}(\mathbb{N}, \mathscr{K}(\ell^{p})\otimes A)\rtimes\Gamma\big)$. Let
$$[z]\in K^{\omega(4\lambda_{N})}_{*}\Big(B^{p}_{L}\big(P_{d}(\Gamma),\ell^{\infty}(\mathbb{N}, \mathscr{K}(\ell^{p})\otimes A)\big)^{\Gamma}\Big)$$
such that 
$$\mu^{\omega(4\lambda_{N}),d}_{\ell^{\infty}(\mathbb{N}, \mathscr{K}(\ell^{p})\otimes A),*}([z])=\iota^{\lambda_{N}}_{*}([x])\text{ in }K^{\omega(4\lambda_{N})\cdot 4\lambda_{N}}_{*}\big(\ell^{\infty}(\mathbb{N}, \mathscr{K}(\ell^{p})\otimes A)\rtimes\Gamma\big).$$
Since the quantitative $L^p$ assembly maps are compatible with the $\omega(4\lambda_{N})$-$L^{p}$ assembly maps, we obtain that
$$\mu^{4N_{1},d}_{\ell^{\infty}(\mathbb{N}, \mathscr{K}(\ell^{p})\otimes A),*}([z]_{4N_{1}})=\iota^{N_{1}}_{*}\circ\mu^{\lambda_{N}\varepsilon, h_{\varepsilon, N}r,\omega(4\lambda_{N}),d}_{\ell^{\infty}(\mathbb{N}, \mathscr{K}(\ell^{p})\otimes A),*}([z]_{\omega(4\lambda_{N})}),$$
where $N_{1}=\max\{\omega(4\lambda_{N})\cdot \lambda_{N},9\omega(4\lambda_{N})\}$.
However, according to Proposition \ref{prop 3.24}, there exists $R\geq h_{\varepsilon,N}r$ such that
$$\iota^{\lambda_{PA}\varepsilon,R,33N_{1}}_{*}([x])=\iota^{\lambda_{PA}\varepsilon,R,33N_{1}}_{*}\circ\mu^{\lambda_{N}\varepsilon,h_{\varepsilon, N}r,\omega(4\lambda_{N}),d}_{\ell^{\infty}(\mathbb{N}, \mathscr{K}(\ell^{p})\otimes A),*}([z]_{\omega(4\lambda_{N})})$$
 $$=\mu^{\lambda_{PA}\varepsilon, R,33N_{1},d}_{\ell^{\infty}(\mathbb{N}, \mathscr{K}(\ell^{p})\otimes A),*}([z]_{33N_{1}}).$$
By Proposition \ref{prop 3}, we have an isomorphism

\begin{equation}\label{eq 7}
K_{*}\Big(B^{p}_{L}\big(P_{d}(\Gamma),\ell^{\infty}(\mathbb{N}, \mathscr{K}(\ell^{p})\otimes A))^{\Gamma}\Big)\xrightarrow{\cong}\prod\limits_{j\in\mathbb{N}}K_{*}\Big(B^{p}_{L}\big(P_{d}(\Gamma),A\big)^{\Gamma}\Big)
\end{equation}
induced by the $j$-th projection in equation \ref{eq 6}. Let $([z_{j}])_{j\in\mathbb{N}}$ be the element of 
$$\prod\limits_{j\in\mathbb{N}}K_{*}\Big(B^{p}_{L}\big(P_{d}(\Gamma),A\big)^{\Gamma}\Big)$$
corresponding to $[z]$ under this identification.
Using the compatibility of the quantitative $L^p$ assembly maps with the usual ones, we obtain by naturality that $\mu^{d}_{A_{i},*}([z_{i}])=0$, for every $ i\in\mathbb{N} $ and  hence
$$i_{d,d',*}([z_{i}])=0\text{ in }K_{*}\Big(B^{p}_{L}\big(P_{d'}(\Gamma),A\big)^{\Gamma}\Big).$$
Using once more equation \ref{eq 7}, we deduce that 
$$i_{d,d',*}([z])=0\text{ in }K_{*}\Big(B^{p}_{L}\big(P_{d'}(\Gamma),\ell^{\infty}(\mathbb{N}, \mathscr{K}(\ell^{p})\otimes A)\big)^{\Gamma}\Big).$$
Let $(p_{t})_{t\in [0,1]}$ be a homotopy of idempotents (resp. invertibles) in $M_{n}(\widetilde{B})$ between $i_{d,d',*}([z])$ and $0$, then $P:=(p_{t})$ is an idempotent (resp. invertible) element in $C([0,1], M_{n}(\widetilde{B}))$, where $B=B^{p}_{L}\big(P_{d'}(\Gamma),\ell^{\infty}(\mathbb{N}, \mathscr{K}(\ell^{p})\otimes A)\big)^{\Gamma}$. Put $N'=\max\{33N_{1},\Vert P\Vert\}$. Since
$$\mu^{\lambda_{PA}\varepsilon,R,N',d}_{\ell^{\infty}(\mathbb{N}, \mathscr{K}(\ell^{p})\otimes A),*}([z])=\mu^{\lambda_{PA}\varepsilon,R,N',d'}_{\ell^{\infty}(\mathbb{N}, \mathscr{K}(\ell^{p})\otimes A),*}\circ i_{d,d',*}([z]),$$
then
$$\iota^{\lambda_{PA}\varepsilon,R,N'}_{*}([x])=0\text{ in }K^{\lambda_{PA}\varepsilon,R,N'}_{*}\big(\ell^{\infty}(\mathbb{N}, \mathscr{K}(\ell^{p})\otimes A)\rtimes\Gamma\big).$$
By naturality, we see that $\iota^{\lambda_{PA}\varepsilon,R,N'}_{*}([x_{i}])=0$ in $K^{\lambda_{PA}\varepsilon,R,N'}_{*}(A\rtimes\Gamma)$,  for all integers $i$. Picking an integer $i$ such that $r_{i}\geq R$ and $N_{i}\geq N'$, we have
$$\iota^{\lambda_{PA}\varepsilon,r_{i},N_{i}}_{*}([x_{i}])=\iota^{\lambda_{PA}\varepsilon,r_{i},N_{i}}_{*}\circ\iota^{\lambda_{PA}\varepsilon,R,N'}_{*}([x_{i}])=0,$$
which contradicts our assumption.
\end{proof}

For any $L^{p}$ operator algebra $A$, the $L^{p}$ Baum-Connes assembly map for $\Gamma$ with coefficients in $C_{0}(\Gamma,A)$ is an isomorphism
and $C_{0}(\Gamma,A)\rtimes\Gamma\cong A\otimes \mathscr{K}(\ell^{p}(\Gamma))$, hence by Theorem \ref{theorem 4.1}, we immediately obtain the following corollary.

\begin{corollary}\label{cor 1}
Let $\Gamma$ be a finitely generated group, and let $A$ be an $L^{p}$ operator algebra. Assume that
\begin{itemize}
\item $\Gamma$ admits a cocompact universal example for proper actions;
\item for any positive integer $\mathscr{N}$, there exists a non-decreasing function $\omega: [1,\infty)\rightarrow [1,\infty)$ such that the $\mathscr{N}$-$L^{p}$ Baum-Connes assembly map for $\Gamma$ with coefficients in 
$$\ell^{\infty}\big(\mathbb{N}, C_{0}(\Gamma,\mathscr{K}(\ell^{p})\otimes A)\big)$$
is $\omega$-surjective.
\end{itemize}
Then for any $N\geq 1$, there exists a universal constant $\lambda_{PA}\geq 1$ such that for any $\varepsilon$ in $(0,\frac{1}{20\lambda_{PA}})$ and any $r>0$, there exist $r'\geq r$ and $N'\geq N$ such that $\mathcal{PA_{*}}\big(A\otimes\mathscr{K}(\ell^{p}(\Gamma)), \varepsilon,\lambda_{PA}\varepsilon,r,r',N,N'\big)$ holds. 
\end{corollary}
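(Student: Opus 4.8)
The plan is to deduce the statement directly from Theorem \ref{theorem 4.1} by applying it to the \emph{induced} coefficient algebra $C_{0}(\Gamma,A)$, equipped with the left-translation action of $\Gamma$. Since $\Gamma$ acts on $C_{0}(\Gamma,A)$ by isometric automorphisms, this is a $\Gamma$-$L^{p}$ operator algebra, so Theorem \ref{theorem 4.1} is applicable once its three hypotheses are checked. The first hypothesis, that $\Gamma$ admits a cocompact universal example for proper actions, is carried over verbatim from the hypotheses of the corollary.

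For the second hypothesis I would use the canonical identification $\mathscr{K}(\ell^{p})\otimes C_{0}(\Gamma,A)\cong C_{0}(\Gamma,\mathscr{K}(\ell^{p})\otimes A)$ of $\Gamma$-$L^{p}$ operator algebras: tensoring by $\mathscr{K}(\ell^{p})$ (with trivial $\Gamma$-action on the first factor) commutes with forming $C_{0}(\Gamma,-)$ and intertwines the translation actions. Consequently
$$\ell^{\infty}\big(\mathbb{N},\mathscr{K}(\ell^{p})\otimes C_{0}(\Gamma,A)\big)\cong\ell^{\infty}\big(\mathbb{N},C_{0}(\Gamma,\mathscr{K}(\ell^{p})\otimes A)\big),$$
so the $\omega$-surjectivity assumed in the corollary is exactly the second hypothesis of Theorem \ref{theorem 4.1} for the coefficient algebra $C_{0}(\Gamma,A)$. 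For the third hypothesis I would invoke the fact recorded immediately before the statement, namely that the $L^{p}$ Baum-Connes assembly map for $\Gamma$ with coefficients in $C_{0}(\Gamma,A)$ is an isomorphism; in particular it is injective, as required.

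With all three hypotheses in place, Theorem \ref{theorem 4.1} supplies a universal constant $\lambda_{PA}\geq 1$ (depending only on $N$ together with the control data $(\lambda,h)$, $\rho$ and $\omega$) for which $\mathcal{PA_{*}}\big(C_{0}(\Gamma,A)\rtimes\Gamma,\varepsilon,\lambda_{PA}\varepsilon,r,r',N,N'\big)$ holds. The remaining step is to transport this along the isomorphism $C_{0}(\Gamma,A)\rtimes\Gamma\cong A\otimes\mathscr{K}(\ell^{p}(\Gamma))$. The single point that genuinely needs care, and the step I expect to be the main obstacle, is that this imprimitivity-type isomorphism must be \emph{filtered}: an element of $\big(C_{0}(\Gamma,A)\rtimes\Gamma\big)_{r}$, i.e.\ a function supported in $B(e,r)$, should correspond to an operator in $A\otimes\mathscr{K}(\ell^{p}(\Gamma))$ whose matrix band-width (propagation for the word metric on $\Gamma$) is at most $r$, and symmetrically for the inverse. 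Granting this, the isomorphism is an isometric filtered isomorphism with filtered inverse, hence induces for every $(\varepsilon,r,N)$ compatible isomorphisms $K^{\varepsilon,r,N}_{*}\big(C_{0}(\Gamma,A)\rtimes\Gamma\big)\cong K^{\varepsilon,r,N}_{*}\big(A\otimes\mathscr{K}(\ell^{p}(\Gamma))\big)$ commuting with the relaxation-of-control maps $\iota^{\varepsilon',r',N'}_{*}$ and with the maps $\iota_{*}$ into ordinary $K$-theory. Since the predicate $\mathcal{PA_{*}}$ is phrased solely in terms of these maps, it transfers unchanged, yielding $\mathcal{PA_{*}}\big(A\otimes\mathscr{K}(\ell^{p}(\Gamma)),\varepsilon,\lambda_{PA}\varepsilon,r,r',N,N'\big)$; everything else is a direct substitution into Theorem \ref{theorem 4.1}.
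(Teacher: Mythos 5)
Your proposal is correct and follows exactly the paper's route: the paper likewise obtains the corollary by applying Theorem \ref{theorem 4.1} to the coefficient algebra $C_{0}(\Gamma,A)$ with the translation action, using the identification $\mathscr{K}(\ell^{p})\otimes C_{0}(\Gamma,A)\cong C_{0}(\Gamma,\mathscr{K}(\ell^{p})\otimes A)$ for the surjectivity hypothesis, the isomorphism of the assembly map with coefficients in $C_{0}(\Gamma,A)$ for injectivity, and the filtered isomorphism $C_{0}(\Gamma,A)\rtimes\Gamma\cong A\otimes\mathscr{K}(\ell^{p}(\Gamma))$ to transport the statement $\mathcal{PA_{*}}$. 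Your explicit attention to the propagation-preserving nature of that last isomorphism is a detail the paper leaves implicit, but it does not constitute a different argument.
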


In particular, if we put $A=\mathbb{C}$, we have the following conclusion.

\begin{proposition}
Let $\Gamma$ be a finitely generated group. Assume that
\begin{itemize}
\item $\Gamma$ admits a cocompact universal example for proper actions;
\item for any positive integer $\mathscr{N}$, there exists a non-decreasing function $\omega: [1,\infty)\rightarrow [1,\infty)$ such that the $\mathscr{N}$-$L^{p}$ Baum-Connes assembly map for $\Gamma$ with coefficients in  
$$\ell^{\infty}\big(\mathbb{N}, C_{0}(\Gamma,\mathscr{K}(\ell^{p})\big)$$
is $\omega$-surjective.
\end{itemize}
Then for any $N\geq 1$, there exists a universal constant $\lambda\geq 1$ such that for any $\varepsilon\in (0,\frac{1}{20\lambda})$ and any $r>0$, there exist $R\geq r$ and $N'\geq N$ such that the following holds:
\begin{itemize}
\item If $u$ is an $(\varepsilon,r,N)$-invertible of $\mathscr{K}(\ell^{p}(\Gamma)\otimes \ell^{p})+\mathbb{C}Id_{\ell^{p}(\Gamma)\otimes \ell^{p}}$, then $u$ is connected to $Id_{\ell^{p}(\Gamma)\otimes\ell^{p}}$ by a homotopy of $(\lambda\varepsilon,R,N')$-invertibles.
\item If $e$ and $f$ are $(\varepsilon,r,N)$-idempotents of $\mathscr{K}(\ell^{p}(\Gamma)\otimes\ell^{p})$ such that 
$$rank{\kappa_{0}}(e)=rank\kappa_{0}(f),$$
then $e$ and $f$ are connected by a homotopy of $(\lambda\varepsilon,R,N')$-idempotents.
\end{itemize}
\end{proposition}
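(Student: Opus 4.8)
The plan is to obtain this statement as the special case $A=\mathbb{C}$ of Corollary \ref{cor 1}, after re-expressing the persistence approximation property for $K_0$ and $K_1$ as concrete propagation-controlled homotopies of quasi-idempotents and quasi-invertibles. First I would put $A=\mathbb{C}$ in Corollary \ref{cor 1}; since $\mathscr{K}(\ell^{p})\otimes\mathbb{C}=\mathscr{K}(\ell^{p})$, the hypothesis that the $\mathscr{N}$-$L^{p}$ Baum--Connes assembly map with coefficients in $\ell^{\infty}(\mathbb{N},C_{0}(\Gamma,\mathscr{K}(\ell^{p})))$ be $\omega$-surjective is exactly the one assumed here. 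This yields a universal constant $\lambda:=\lambda_{PA}\geq 1$ such that $\mathcal{PA}_{*}(\mathbb{C}\otimes\mathscr{K}(\ell^{p}(\Gamma)),\varepsilon,\lambda\varepsilon,r,r',N,N')$ holds; as $\mathbb{C}\otimes\mathscr{K}(\ell^{p}(\Gamma))=\mathscr{K}(\ell^{p}(\Gamma))$, I would then apply the Morita equivalence of Proposition \ref{prop2} (and Proposition \ref{prop 2.17} when $p=1$) to identify $K^{\varepsilon,r,N}_{*}(\mathscr{K}(\ell^{p}(\Gamma)))$ with $K^{\varepsilon,r,N}_{*}(\mathscr{K}(\ell^{p})\otimes\mathscr{K}(\ell^{p}(\Gamma)))=K^{\varepsilon,r,N}_{*}(\mathscr{K}(\ell^{p}(\Gamma)\otimes\ell^{p}))$, thereby transporting $\mathcal{PA}_{*}$ to the filtered algebra $\mathscr{K}(\ell^{p}(\Gamma)\otimes\ell^{p})$. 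Fixing $\varepsilon$, $r$ and $N$ then supplies the promised $R\geq r$ and $N'\geq N$. Note also that the same Morita equivalence gives $K_{*}(\mathscr{K}(\ell^{p}(\Gamma)\otimes\ell^{p}))\cong K_{*}(\mathbb{C})$, so that $K_{1}=0$ and $K_{0}\cong\mathbb{Z}$ via the rank of a finite-rank idempotent.

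For the odd part, let $u$ be an $(\varepsilon,r,N)$-invertible in $\mathscr{K}(\ell^{p}(\Gamma)\otimes\ell^{p})+\mathbb{C}\,Id$, and regard $[u]_{\varepsilon,r,N}$ as a class in $K^{\varepsilon,r,N}_{1}(\mathscr{K}(\ell^{p}(\Gamma)\otimes\ell^{p}))$. Since $K_{1}$ of this algebra vanishes, $\iota_{1}([u]_{\varepsilon,r,N})=0$ in $K_{1}$ automatically. Applying $\mathcal{PA}_{1}$ gives $\iota^{\lambda\varepsilon,R,N'}_{1}([u])=0=[Id]$ in $K^{\lambda\varepsilon,R,N'}_{1}$, which, by the definition of the equivalence relation on quasi-invertibles, means that $u$ (possibly after stabilisation by an identity block) is connected to $Id$ through $(4\lambda\varepsilon,2R,4N')$-invertibles. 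I would then absorb the stabilising block by stability of $\mathscr{K}(\ell^{p}(\Gamma)\otimes\ell^{p})$, namely $\mathscr{K}(\ell^{p})\otimes\mathscr{K}(\ell^{p}(\Gamma)\otimes\ell^{p})\cong\mathscr{K}(\ell^{p}(\Gamma)\otimes\ell^{p})$, to produce a homotopy inside $\mathscr{K}(\ell^{p}(\Gamma)\otimes\ell^{p})+\mathbb{C}\,Id$ itself, and finally rename the constants (replacing $\lambda$ by $4\lambda$ and enlarging $R$, $N'$ to absorb the factors $2$ and $4$) to reach the stated homotopy of $(\lambda\varepsilon,R,N')$-invertibles.

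For the even part, let $e,f$ be $(\varepsilon,r,N)$-idempotents in $\mathscr{K}(\ell^{p}(\Gamma)\otimes\ell^{p})$ with $\operatorname{rank}\kappa_{0}(e)=\operatorname{rank}\kappa_{0}(f)$. Each $\kappa_{0}(e)$, $\kappa_{0}(f)$ is a genuine finite-rank idempotent obtained by the holomorphic functional calculus of Remark \ref{rmk 2.11}, and under the identification $K_{0}\cong\mathbb{Z}$ the rank hypothesis is precisely the equality $\iota_{0}([e]_{\varepsilon,r,N})=\iota_{0}([f]_{\varepsilon,r,N})$, i.e.\ $\iota_{0}([e]-[f])=0$ in $K_{0}$. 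Applying $\mathcal{PA}_{0}$ to the class $[e]-[f]\in K^{\varepsilon,r,N}_{0}$ yields $[e]_{\lambda\varepsilon,R,N'}=[f]_{\lambda\varepsilon,R,N'}$ in $K^{\lambda\varepsilon,R,N'}_{0}$. Unwinding the Grothendieck relation produces an auxiliary quasi-idempotent $g$ with $\operatorname{diag}(e,g)$ and $\operatorname{diag}(f,g)$ homotopic through $(4\lambda\varepsilon,R,4N')$-idempotents; using stability once more, together with the equality of ranks (which guarantees the stabilisation is symmetric and introduces no rank defect), I would absorb $g$ and the matrix amplification to obtain a direct homotopy of $(\lambda\varepsilon,R,N')$-idempotents between $e$ and $f$ after the analogous renaming of constants. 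Collecting all multiplicative constant changes from both cases into a single universal $\lambda$ and enlarging $R$, $N'$ accordingly completes the argument.

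The main obstacle is the final step in each case: passing from a stabilised, Grothendieck-level identity in quantitative $K$-theory (``$[u]=0$'' or ``$[e]=[f]$'') to an honest homotopy of the original elements inside the single (unitised) algebra $\mathscr{K}(\ell^{p}(\Gamma)\otimes\ell^{p})$ while keeping the propagation bounded by the universal $R$. This hinges on the stability isomorphism $\mathscr{K}(\ell^{p})\otimes\mathscr{K}(\ell^{p}(\Gamma)\otimes\ell^{p})\cong\mathscr{K}(\ell^{p}(\Gamma)\otimes\ell^{p})$ to absorb both the matrix amplifications implicit in the definition of $K^{\varepsilon,r,N}_{*}$ and the auxiliary block $g$, and it is exactly the rank condition that makes this absorption possible in the even case. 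Throughout, I would invoke Proposition \ref{prop 2.19} to convert controlled homotopy classes into Lipschitz homotopies of quasi-idempotents and quasi-invertibles with the explicit parameter changes recorded there, so that the bookkeeping of how $\varepsilon$, $r$ and $N$ propagate stays uniform in $e$, $f$ and $u$.
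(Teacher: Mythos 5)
Your proposal is correct and follows exactly the paper's route: the paper's entire proof of this proposition is the observation that one should put $A=\mathbb{C}$ in Corollary \ref{cor 1}, so that $\mathcal{PA}_{*}$ holds for $\mathbb{C}\otimes\mathscr{K}(\ell^{p}(\Gamma))$ (identified with $\mathscr{K}(\ell^{p}(\Gamma)\otimes\ell^{p})$ up to the Morita equivalence of Proposition \ref{prop2}), and the two bullet points are then read off from the statement $\mathcal{PA}_{*}$ using $K_{1}$ of this algebra being $0$ and $K_{0}\cong\mathbb{Z}$ detected by the rank of $\kappa_{0}$. Your unwinding --- the Morita transport, the vanishing/rank argument, and the absorption of matrix stabilisation via the propagation-preserving identification $M_{m}\big(\mathscr{K}(\ell^{p}(\Gamma)\otimes\ell^{p})\big)\cong\mathscr{K}(\ell^{p}(\Gamma)\otimes\ell^{p})$, with the parameter bookkeeping from Proposition \ref{prop 2.19} --- is precisely the argument the paper leaves implicit.
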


\section{Applications involving $L^{p}$ coarse Baum-Connes conjecture}
In this section, $X$ will be a discrete metric space with bounded geometry and $A$ will be an $L^{p}$ operator algebra. We will present a result on the persistence approximation property of $L^{p}$ Roe algebras for $X$. This result is applied to show that if any such space is coarsely uniformly contractible and satisfies controlled-surjectivity of the $\mathscr{N}$-$L^{p}$ coarse Baum-Connes assembly map and injectivity of the $L^{p}$ coarse Baum-Connes assembly map, then the $L^{p}$ Roe algebra $B^{p}(X,A)$ has the persistence approximation property.

Assume that $\mathcal{A}=(A_{i})_{i\in\mathbb{N}}$ is any family of filtered $L^{p}$ operator algebras. For each $i\in\mathbb{N}$, there is a representation of $A_{i}$ on an $L^{p}$ space $E_{i}$. We define $E:=\bigoplus\limits_{i\in\mathbb{N}}E_{i}=\{(e_{i})_{i\in\mathbb{N}}\mid e_{i}\in E_{i}\}$ with the norm  $\Vert(e_{1},e_{2},\cdots)\Vert=\{\sum\limits_{i\in\mathbb{N}}\vert e_{i}\vert^{p}\}^{\frac{1}{p}}$. Clearly, $E$ is an $L^{p}$ space. Let $L'_{d}=\ell^{p}(Q_{d})\otimes E\otimes\ell^{p}$ be a certain $L^{p}$-$X$-module defined in \cite{ZZ21}, and let $\mathbb{C}[L'_{d},A_{i}]$ denote the algebra of all $E$-locally compact operators on $L'_{d}$ with finite propagation. For any $r>0$, we set
$$\mathcal{A}_{d,r}^{\infty}=\prod\limits_{i\in\mathbb{N}}\mathbb{C}[L'_{d},A_{i}]_{r},$$
and we define the $L^{p}$ operator algebra $\mathcal{A}_{d}^{\infty}$ as the closure of $\bigcup\limits_{r>0}\mathcal{A}_{d,r}^{\infty}$ in $\prod\limits_{i\in\mathbb{N}}B^{p}(P_{d}(X),A_{i})$.

\begin{lemma}
Let $X$ be a discrete metric space with bounded geometry, and let $\mathcal{A}=(A_{i})_{i\in\mathbb{N}}$ be a family of filtered $L^{p}$ operator algebras. Then there exist a control pair $(\lambda,h)$ independent of the family $\mathcal{A}$ and a $(\lambda,h)$-isomorphism  
$$\mathcal{G}=(G^{\varepsilon,r,N})_{0<\varepsilon<\frac{1}{20},r>0,N\geq 1}: \mathcal{K}_{*}(\mathcal{A}_{d}^{\infty})\rightarrow\prod\limits_{i \in\mathbb{N}}\mathcal{K}_{*}\big(B^{p}(P_{d}(X),A_{i})\big),$$
where 
$$G^{\varepsilon,r,N}: K^{\varepsilon,r,N}_{*}(\mathcal{A}_{d}^{\infty})\rightarrow\prod\limits_{i \in\mathbb{N}}K^{\varepsilon,r,N}_{*}\big(B^{p}(P_{d}(X),A_{i})\big)$$
is induced on the $j$-th factor by the projection $\prod\limits_{i\in\mathbb{N}}B^{p}(P_{d}(X),A_{i})\rightarrow B^{p}(P_{d}(X),A_{j})$.
\end{lemma}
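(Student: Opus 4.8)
The plan is to follow the proof of Lemma \ref{lemma 2} in spirit, replacing each coefficient algebra $A_{i}$ there by the $L^{p}$ Roe algebra $B^{p}(P_{d}(X),A_{i})$ and the building blocks $\mathscr{K}(\ell^{p})\otimes A_{i,r}$ by the finite propagation algebras $\mathbb{C}[L'_{d},A_{i}]_{r}$. First I would check that $\mathcal{G}$ is well defined: for each $j$ the projection $\prod_{i\in\mathbb{N}}B^{p}(P_{d}(X),A_{i})\to B^{p}(P_{d}(X),A_{j})$ restricts to a filtered, $p$-completely contractive homomorphism on $\mathcal{A}_{d}^{\infty}$, hence induces homomorphisms on every $K^{\varepsilon,r,N}_{*}$, and assembling these over $j$ gives the family $(G^{\varepsilon,r,N})$. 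Because the module $L'_{d}=\ell^{p}(Q_{d})\otimes E\otimes\ell^{p}$ carries the auxiliary $\ell^{p}$ multiplicity, a matrix amplification $M_{n}\big(B^{p}(P_{d}(X),A_{i})\big)$ is absorbed back into $B^{p}(P_{d}(X),A_{i})$ via the Morita equivalence of Proposition \ref{prop2}; this is what lets me work with quasi-idempotents and quasi-invertibles of a uniform matrix size throughout.

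To produce a controlled inverse $\mathcal{H}$, I would send a sequence $([x_{i}])_{i\in\mathbb{N}}$ to the class assembled from representatives. Concretely, represent each $[x_{i}]$ by an $(\varepsilon,r,N)$-idempotent (resp.\ invertible) in $\widetilde{B^{p}(P_{d}(X),A_{i})}$ of propagation at most $r$ and norm at most $N$; after a harmless relaxation of control I may replace it by a finite propagation element of $\mathbb{C}[L'_{d},A_{i}]_{r}$. The resulting sequence lies in $\widetilde{\mathcal{A}_{d}^{\infty}}$ and is again an $(\varepsilon,r,N)$-idempotent (resp.\ invertible), and applying $G^{\varepsilon,r,N}$ returns $([x_{i}])_{i\in\mathbb{N}}$; this gives $\mathcal{G}\circ\mathcal{H}\overset{(\lambda,h)}{\sim}\mathcal{I}d$.

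For the reverse composition, suppose $G^{\varepsilon,r,N}([x])=0$, so that each component is trivial in $K^{\varepsilon,r,N}_{*}\big(B^{p}(P_{d}(X),A_{i})\big)$, i.e.\ homotopic to a degenerate element through $(4\varepsilon,r,4N)$-idempotents (resp.\ $(4\varepsilon,2r,4N)$-invertibles). The key point is to invoke Proposition \ref{prop 2.19}, which upgrades each such homotopy to an $\alpha$-Lipschitz (resp.\ $\beta$-Lipschitz) homotopy whose Lipschitz constant and whose parameter changes depend only on $N$ and not on $i$. Because these data are uniform over the family, the separate homotopies glue to a single norm-continuous path in $\widetilde{\mathcal{A}_{d}^{\infty}}$, showing that $[x]$ vanishes after the corresponding controlled relaxation; hence $\mathcal{H}\circ\mathcal{G}\overset{(\lambda,h)}{\sim}\mathcal{I}d$. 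Taking the control pair supplied by Proposition \ref{prop 2.19} (for instance $\lambda_{N}=\tfrac{5}{2}N$ and $h(\cdot,N)=2$ in the even, unital case) makes $\mathcal{G}$ a $(\lambda,h)$-isomorphism with $(\lambda,h)$ independent of $\mathcal{A}$.

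The step I expect to be the main obstacle is this reverse composition: one must guarantee that the contracting homotopies in the separate factors $B^{p}(P_{d}(X),A_{i})$ are controlled \emph{uniformly} in $i$ so that they assemble into an honest homotopy in the product, and the uniform Lipschitz bounds of Proposition \ref{prop 2.19} are precisely what make this gluing legitimate. When some $A_{i}$ is non-unital the same scheme runs after passing to unitizations, following the argument of Lemma 2.14 in \cite{PAP-Oyono}.
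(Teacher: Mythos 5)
Your proposal is correct and follows essentially the same route as the paper's proof: both rest on the matrix-absorption property $M_{n}\big(B^{p}(P_{d}(X),A_{i})\big)\subset B^{p}(P_{d}(X),A_{i})$ coming from the auxiliary $\ell^{p}$ factor in $L'_{d}$, prove controlled surjectivity by assembling representatives of uniform propagation and norm into an element of $\mathcal{A}_{d}^{\infty}$, and prove controlled injectivity by gluing the factorwise null-homotopies using the uniform Lipschitz bounds of Proposition \ref{prop 2.19}, deferring the non-unital case to Lemma 2.14 of \cite{PAP-Oyono}. Your packaging as an explicit controlled inverse $\mathcal{H}$ rather than as controlled injectivity plus surjectivity of $\mathcal{G}$ is only a cosmetic difference.
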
 

\begin{proof}
Let us first consider the even case. For $0<\varepsilon<\frac{1}{20}$, $r>0$ and $N\geq 1$, there exist a control pair $(\lambda,h)$ and a $(\lambda,h)$-controlled morphism
$$G^{\varepsilon,r,N}: K^{\varepsilon,r,N}_{*}(\mathcal{A}_{d}^{\infty})\rightarrow\prod\limits_{i \in\mathbb{N}}K^{\varepsilon,r,N}_{*}\big(B^{p}(P_{d}(X),A_{i})\big)$$
induced on the $j$-th factor by the projection $\prod\limits_{i\in\mathbb{N}}B^{p}(P_{d}(X),A_{i})\rightarrow B^{p}(P_{d}(X),A_{j})$. For any positive integer $i$ and $n$, we know that
$$M_{n}\big(\ell^{\infty}(X, A_{i}\otimes\mathscr{K}(\ell^{p}))\big)\subset\ell^{\infty}(X,A_{i})\otimes\mathscr{K}(\ell^{p}).$$
Hence, $M_{n}(B^{p}(P_{d}(X),A_{i}))\subset B^{p}(P_{d}(X),A_{i})$. 
Assume that $x$ is the element in $\prod\limits_{i\in\mathbb{N}}K^{\varepsilon,r, N}_{0}(B^{p}(P_{d}(X),A_{i}))$, then we can write  $[x]=([x_{i}])_{i\in\mathbb{N}}$ for $[x_{i}]\in K^{\varepsilon,r, N}_{0}(B^{p}(P_{d}(X),A_{i}))$. Let $(e_{i})_{i\in\mathbb{N}}$ be a family of $(\varepsilon,r,N)$-idempotents with $e_{i}$ in some $M_{n}(\widetilde{ B^{p}(P_{d}(X),A_{i})})$ such that $[x]_{\varepsilon,r,N}=[(e_{i})_{i\in\mathbb{N}}]_{\varepsilon,r,N}$, then $G^{\varepsilon,r,N}$ is $(\lambda,h)$-surjective. 

According to the item (i) of Proposition \ref{prop 2.19}, we construct the Lipschitz homotopy of $(\varepsilon,r,N)$-idempotents in larger matrix size, thus we can prove that $G^{\varepsilon,r,N}$ is $(\lambda,h)$-injective. In the odd case, we have a similar proof.
\end{proof}

\begin{lemma}\label{lemma 5.2}
Let $X$ be a discrete metric space with bounded geometry, and let $\mathcal{A}=(A_{i})_{i\in\mathbb{N}}$ be a family of filtered $L^{p}$ operator algebras, then we have a filtered isomorphism
$$\phi: B^{p}\big(P_{d}(X),\prod\limits_{i\in\mathbb{N}}A_{i}\big)\rightarrow\mathcal{A}^{\infty}_{d}.$$
\end{lemma}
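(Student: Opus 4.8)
The plan is to produce $\phi$ from the natural block decomposition of the ambient $L^p$-$X$-module and then check that it is an isometric, filtration-preserving algebra isomorphism, so that it passes to the completions. The underlying observation is that, since $E=\bigoplus_{i\in\mathbb{N}}E_i$ carries the $\ell^p$-norm, the module decomposes as
$$L'_d=\ell^p(Q_d)\otimes E\otimes\ell^p=\bigoplus_{i\in\mathbb{N}}\big(\ell^p(Q_d)\otimes E_i\otimes\ell^p\big),$$
an $\ell^p$-direct sum of the modules $L'_{d,i}$ carrying the representation of $A_i$. As $\prod_{i}A_i$ acts diagonally on $E$, the image of $\big(\prod_i A_i\big)\otimes\mathscr{K}(\ell^p)$ consists of block-diagonal operators with respect to this splitting, which yields a canonical identification $\big(\prod_i A_i\big)\otimes\mathscr{K}(\ell^p)\cong\prod_i\big(A_i\otimes\mathscr{K}(\ell^p)\big)$ of the coefficient algebras.

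First I would define $\phi$ on the dense subalgebra $\mathbb{C}[L'_d,\prod_i A_i]$. Writing an operator $T$ as a $(Q_d\times Q_d)$-indexed matrix $(T_{y,z})$ with entries in $\big(\prod_i A_i\big)\otimes\mathscr{K}(\ell^p)$, the identification above presents each entry as a bounded sequence $(T_{y,z}^{(i)})_i$, and I set $\phi(T)=(T^{(i)})_{i\in\mathbb{N}}$ with $T^{(i)}=(T_{y,z}^{(i)})_{y,z}$. Since matrix multiplication over $Q_d$ is performed entrywise and the blocks multiply independently, $\phi$ is an algebra homomorphism. Moreover $\mathrm{prop}(T)=\sup_i\mathrm{prop}(T^{(i)})$, and the $E$-local compactness of $T$ forces each $T^{(i)}$ to be $E$-locally compact, because the support of $T^{(i)}$ is contained in that of $T$; hence $\phi$ sends the propagation-$r$ part $\mathbb{C}[L'_d,\prod_i A_i]_r$ into $\mathcal{A}^\infty_{d,r}=\prod_i\mathbb{C}[L'_d,A_i]_r$, that is, $\phi$ respects the filtrations.

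The isometry is the decisive point, and it is where the $\ell^p$-direct sum structure is used. For a block-diagonal operator $T=\bigoplus_i T^{(i)}$ and $\xi=(\xi_i)\in\bigoplus_i L'_{d,i}$,
$$\|T\xi\|_p^p=\sum_i\|T^{(i)}\xi_i\|_p^p\le\Big(\sup_i\|T^{(i)}\|\Big)^p\,\|\xi\|_p^p,$$
while restricting to a single summand gives the reverse bound; hence $\|T\|=\sup_i\|T^{(i)}\|$, which is precisely the norm of $\phi(T)$ in $\prod_i B^p(P_d(X),A_i)$. Thus $\phi$ is isometric, and being filtration-preserving it extends to an isometric homomorphism $B^p(P_d(X),\prod_i A_i)\to\mathcal{A}^\infty_d$ of filtered $L^p$ operator algebras.

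The main obstacle I expect is not the construction but surjectivity onto $\mathcal{A}^\infty_d$, namely showing that every element of $\mathcal{A}^\infty_{d,r}$ arises, after passing to the closure, from a genuine finite-propagation, $E$-locally compact operator on $L'_d$. The delicate point is that a bounded sequence $(S_i)_i$ of propagation-$r$ operators assembles to a block-diagonal $T$ whose support on a compact set is the union of the supports of the $S_i$, which need not be finite a priori. Here the bounded geometry of $X$ is essential: it provides a bound on the number of points within distance $r$ that is uniform in $i$, so that the local-compactness and finite-propagation conditions transfer uniformly and the assembled operator lies in the closure of $\mathbb{C}[L'_d,\prod_i A_i]$. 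With this in hand, $\phi$ is a bijective isometry with filtered inverse, hence a filtered isomorphism. This mirrors the coefficient-product argument used for Lemma \ref{lemma 2} and the non-unital handling of Lemma 2.14 in \cite{PAP-Oyono}.
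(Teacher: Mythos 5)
Your map $\phi$ is the same as the paper's (entrywise application of the coordinate projections $\prod_{j}A_{j}\rightarrow A_{i}$), but your proof strategy is genuinely different: the paper obtains $\phi$ abstractly, proves injectivity by constructing a left inverse $\psi:\mathcal{A}^{\infty}_{d}\rightarrow B^{p}\big(P_{d}(X),\prod_{i}A_{i}\big)$ induced by the coordinate inclusions $A_{i}\hookrightarrow\prod_{j}A_{j}$ (so that $\psi\circ\phi=\mathrm{id}$), and deduces surjectivity from density of the image of the dense subalgebra $\mathbb{C}[L'_{d},\prod_{i}A_{i}]$. You instead prove that $\phi$ is isometric. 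That part of your argument is correct: every operator with coefficients in $\prod_{i}A_{i}$ is block diagonal for the decomposition $L'_{d}=\bigoplus_{i}\big(\ell^{p}(Q_{d})\otimes E_{i}\otimes\ell^{p}\big)$, and the norm of a block-diagonal operator on an $\ell^{p}$-direct sum is the supremum of the block norms; this gives injectivity and closed range at once, and is in fact a cleaner substitute for the paper's $\psi$-argument.

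The genuine gap is in your surjectivity step, precisely at the point you yourself flag as the main obstacle. It rests on the claimed ``canonical identification'' $(\prod_{i}A_{i})\otimes\mathscr{K}(\ell^{p})\cong\prod_{i}\big(A_{i}\otimes\mathscr{K}(\ell^{p})\big)$, which is false: the natural map realizes the left-hand side only as a proper closed subalgebra of the right-hand side. Concretely, for $A_{i}=\mathbb{C}$ the sequence of matrix units $(e_{1i})_{i\in\mathbb{N}}$ (the rank-one operators $\xi\mapsto\xi(i)\delta_{1}$) lies in $\prod_{i}\mathscr{K}(\ell^{p})$ but not in $\ell^{\infty}(\mathbb{N})\otimes\mathscr{K}(\ell^{p})$: any element of the latter is within $\varepsilon$ of a sequence taking values in a fixed finite-dimensional subspace $V\subset\mathscr{K}(\ell^{p})$, and a bounded sequence that is $1$-separated in norm, as $(e_{1i})_{i}$ is, cannot lie within $\varepsilon<1/3$ of such a $V$. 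Consequently, when you assemble a bounded sequence $(S_{i})_{i}\in\prod_{i}\mathbb{C}[L'_{d},A_{i}]_{r}$ into the block-diagonal operator $T$, the entries $T_{y,z}=(S_{i,y,z})_{i}$ need not lie in $(\prod_{i}A_{i})\otimes\mathscr{K}(\ell^{p})$, so $T$ need not be $E$-locally compact with coefficients in $\prod_{i}A_{i}$; this obstruction concerns the coefficient algebras, and bounded geometry of $X$ cannot remove it. Bounded geometry also does not give local finiteness of the support of $T$: the supports of the $S_{i}$ lie in the countable dense subset $Q_{d}$ of $P_{d}(X)$ and may vary with $i$, so their union can be infinite inside a single compact simplex (take $S_{i}$ supported at $(q_{i},q_{i})$ for distinct points $q_{i}$ of $Q_{d}$ in one simplex). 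What your argument actually needs --- and what the paper asserts at the corresponding point, namely that $\phi\big(\mathbb{C}[L'_{d},\prod_{i}A_{i}]\big)$ is dense in $\mathcal{A}^{\infty}_{d}$ --- is an approximation statement in the supremum norm over $i$; exact block-diagonal assembly of an arbitrary bounded sequence is not available, and your appeal to bounded geometry does not supply the missing approximation.
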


\begin{proof}
By the universal property of $B^{p}(P_{d}(X),\prod\limits_{i\in\mathbb{N}}A_{i})$, there exists a filtered homomorphism
$$B^{p}(P_{d}(X),\prod\limits_{i\in\mathbb{N}}A_{i})\rightarrow\mathcal{A}^{\infty}_{d}.$$
Note that the filtered homomorphism $\phi$ maps the dense subalgebra $\mathbb{C}[L'_{d},\prod\limits_{i\in\mathbb{N}}A_{i}]$ to a dense subalgebra of $\mathcal{A}^{\infty}_{d}$, thus we can easily get that $\phi$ is surjective. It thus suffices to show that $\phi$ is injective. For every positive integer $i$, we have the inclusion $A_{i}\rightarrow\prod\limits_{i\in\mathbb{N}}A_{i}$. Hence, we have a filtered homomorphism 
$$B^{p}(P_{d}(X),A_{i})\rightarrow B^{p}(P_{d}(X),\prod\limits_{i\in\mathbb{N}}A_{i})$$
which induces a filtered homomorphism
$$\psi: {A}^{\infty}_{d}\rightarrow B^{p}(P_{d}(X),\prod\limits_{i\in\mathbb{N}}A_{i})$$
such that the composition 
$$B^{p}(P_{d}(X),\prod\limits_{i\in\mathbb{N}}A_{i})\xrightarrow{\phi}{A}^{\infty}_{d}\xrightarrow{\psi} B^{p}(P_{d}(X),\prod\limits_{i\in\mathbb{N}}A_{i})$$
is an identity map. Let $x$ be in $B^{p}(P_{d}(X),\prod\limits_{i\in\mathbb{N}}A_{i})$ such that $\phi(x)=0$ in ${A}^{\infty}_{d}$, then $x=\psi(\phi(x))=0$, thus $\phi$ is injective. This implies that $\phi$ is a filtered isomorphism.
\end{proof}

The preceding Lemma \ref{lemma 5.2} yields the following.

\begin{corollary}\label{cor 5.3}
Let $X$ be a discrete metric space with bounded geometry, and let $\mathcal{A}=(A_{i})_{i\in\mathbb{N}}$ be a family of filtered $L^{p}$ operator algebras, then there exist a control pair $(\lambda,h)$ and a $(\lambda,h)$-isomorphism
$$\mathcal{K}_{*}\big(B^{p}(P_{d}(X),\prod\limits_{i\in\mathbb{N}}A_{i})\big)\rightarrow\prod\limits_{i\in\mathbb{N}}\mathcal{K}_{*}\big(B^{p}(P_{d}(X),A_{i})\big).$$
Moreover, passing to the limit, we obtain
$$\mathcal{K}_{*}\big(B^{p}(X,\prod\limits_{i\in\mathbb{N}}A_{i})\big)\rightarrow\prod\limits_{i\in\mathbb{N}}\mathcal{K}_{*}\big(B^{p}(X,A_{i})\big).$$
\end{corollary}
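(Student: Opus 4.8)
The plan is to obtain the first isomorphism by composing the two controlled isomorphisms already established, and then to derive the second by passing to the limit in $d$, the whole point being that the uniform control pair makes the quantitative framework interact well with infinite products.

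First I would record that, by Lemma \ref{lemma 5.2}, the map $\phi: B^{p}\big(P_{d}(X),\prod_{i\in\mathbb{N}}A_{i}\big)\to\mathcal{A}^{\infty}_{d}$ is a filtered isomorphism whose inverse $\psi$ is again a filtered homomorphism (this is exactly what the proof of that lemma produces). Hence both $\phi$ and $\psi$ are $p$-completely bounded and filtration-preserving, so each induces a controlled morphism on quantitative $K$-theory and the two are mutually inverse up to relaxation of control. Therefore $\phi$ induces a controlled isomorphism
$$\phi_{*}:\mathcal{K}_{*}\big(B^{p}(P_{d}(X),\textstyle\prod_{i\in\mathbb{N}}A_{i})\big)\xrightarrow{\ \cong\ }\mathcal{K}_{*}(\mathcal{A}^{\infty}_{d}),$$
whose control pair is governed by the $p$-completely bounded norms of $\phi$ and $\psi$, and is the identity control pair when $\phi$ is $p$-completely isometric.

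Next I would invoke the preceding lemma, which supplies a $(\lambda,h)$-isomorphism $\mathcal{G}:\mathcal{K}_{*}(\mathcal{A}^{\infty}_{d})\to\prod_{i\in\mathbb{N}}\mathcal{K}_{*}\big(B^{p}(P_{d}(X),A_{i})\big)$ with control pair independent of the family $\mathcal{A}$. Composing, $\mathcal{G}\circ\phi_{*}$ is again a controlled isomorphism, with control pair the product of the two, which proves the first assertion; and since both $\phi$ and $\mathcal{G}$ are built from the projections $\prod_{i}A_{i}\to A_{j}$, the resulting map is on each factor the one induced by that projection. Because the uniform control pair here ultimately comes from the Lipschitz homotopies of Proposition \ref{prop 2.19}, it is independent not only of $\mathcal{A}$ but also of $d$, which is what makes the limit step below possible.

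Finally, for the ``passing to the limit'' statement I would use that $\phi$ and $\mathcal{G}$ are natural with respect to the inclusions $i_{d,d'}: P_{d}(X)\hookrightarrow P_{d'}(X)$, so that the composites $\mathcal{G}\circ\phi_{*}$ form a compatible family indexed by $d$ governed by a single control pair. Taking the inductive limit over $d$, invoking continuity of quantitative $K$-theory under direct limits, and using that $X$ is coarsely equivalent to $P_{d}(X)$ to identify the Roe algebras $B^{p}(P_{d}(X),B)$ with $B^{p}(X,B)$ compatibly in $d$, one obtains the desired controlled isomorphism at the level of $X$. The main obstacle is precisely that the infinite product $\prod_{i\in\mathbb{N}}$ does not commute with the limiting process in the category of ordinary $K$-groups; what rescues the argument is the uniformity of the control pair in $d$, which ensures that the canonical comparison map $\varinjlim_{d}\prod_{i}\mathcal{K}_{*}(B^{p}(P_{d}(X),A_{i}))\to\prod_{i}\mathcal{K}_{*}(B^{p}(X,A_{i}))$ respects the controlled structure, so that the limiting map remains a $(\lambda,h)$-isomorphism.
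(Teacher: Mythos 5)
Your proposal is correct and follows exactly the route the paper intends: the paper offers no explicit proof beyond the remark that Lemma \ref{lemma 5.2} yields the corollary, the implicit argument being precisely your composition of the controlled isomorphism induced by the filtered isomorphism $\phi$ with the $(\lambda,h)$-isomorphism $\mathcal{G}$ of the preceding lemma, followed by passage to the limit in $d$. Your additional observations --- that the factor-wise maps are induced by the projections $\prod_{i}A_{i}\to A_{j}$, and that the control pair (coming from Proposition \ref{prop 2.19}) is independent of $d$, which is what legitimizes the limit step for the Roe algebras $B^{p}(P_{d}(X),\cdot)\cong B^{p}(X,\cdot)$ --- are exactly the points needed to make the paper's one-line justification rigorous.
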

\begin{definition}\cite{PAP-Oyono}
A discrete metric space $X$ is coarsely uniformly contractible, if for each $d>0$, there exists $d'>d$ such that any compact subset of $P_{d}(X)$ lies in a contractible invariant compact subset of $P_{d'}(X)$.
\end{definition}

\begin{example}\cite{PAP-Oyono}
Any discrete Gromov hyperbolic metric space is coarsely uniformly contractible.
\end{example}

\begin{definition}
Let $A$ be an $L^{p}$ operator algebra. The evaluation-at-zero homomorphism 
$$ev_{0}: B^{p}_{L}\big(P_{d}(X),A\big)\rightarrow B^{p}\big(P_{d}(X),A\big),$$
induces a homomorphism on $K$-theory
$$\mu^{d}_{A,*}=ev_{*}: K_{*}\big(B^{p}_{L}(P_{d}(X),A)\big)\rightarrow K_{*}\big(B^{p}(P_{d}(X),A)\big)\cong K_{*}\big(B^{p}(X,A)\big),$$
called an $L^{p}$ coarse assembly map. 
\end{definition}
The family of $L^{p}$ coarse assembly maps $(\mu^{d}_{A,*})_{d>0}$ gives rise to a homomorphism
$$\mu_{A,*}: \lim\limits_{d>0} K_{*}\big(B^{p}_{L}(P_{d}(X),A)\big)\rightarrow K_{*}\big(B^{p}(X,A)\big)$$
called the $L^{p}$ coarse Baum-Connes assembly map. Moreover, the $L^{p}$ coarse Baum-Connes conjecture for $X$ posits that this map $\mu_{A,*}$ is an isomorphism.

\begin{definition}
Let $A$ be an $L^{p}$ operator algebra. For $N\geq 1$, we define an $N$-$L^{p}$ coarse assembly map
$$\mu^{N,d}_{A,*}: K^{N}_{*}\big(B^{p}_{L}(P_{d}(X),A)\big)\rightarrow K^{N}_{*}\big(B^{p}(P_{d}(X),A)\big)\cong K^{N}_{*}\big(B^{p}(X,A)\big)$$
induced by the evaluation-at-zero homomorphism 
$$ev_{0}: B^{p}_{L}\big(P_{d}(X),A\big)\rightarrow B^{p}\big(P_{d}(X),A\big).$$
\end{definition}
The family of $N$-$L^{p}$ coarse assembly maps $(\mu^{N,d}_{A,*})_{d>0}$ gives rise to a homomorphism 
$$\mu^{N}_{A,*}: \lim\limits_{d>0}K^{N}_{*}\big(B^{p}_{L}(P_{d}(X),A)\big)\rightarrow  K^{N}_{*}\big(B^{p}(X,A)\big)$$
 called the $N$-$L^{p}$ coarse Baum-Connes assembly map.

\begin{remark}\label{rmk 5.8}
From the proof of Theorem 4.6 in \cite{ZZ21}, we see that if $X$ is a proper metric space with finite asymptotic dimension, then the $N$-$L^{p}$ coarse Baum-Connes assembly map for $X$ is $\omega$-surjective, and the function $\omega$ depends on the asymptotic dimension $m$, strong Lipschitz constant $C$ and Mayer-Vietoris control pair $(\lambda, h)$.
\end{remark}
The following result gives a sufficient condition for persistence approximation property to be satisfied for a class of $L^{p}$ operator algebras.

\begin{theorem}\label{th 5.6}
Let $X$ be a discrete metric space with bounded geometry, and let $A$ be an $L^{p}$ operator algebra. Assume that
\begin{itemize}
\item $X$ is coarsely uniformly contractible;
\item for any positive integer $\mathscr{N}$, there exists a non-decreasing function $\omega: [1,\infty)\rightarrow [1,\infty)$ such that the $\mathscr{N}$-$L^{p}$ coarse Baum-Connes assembly map for $X$ with coefficients in  
$$\ell^{\infty}(\mathbb{N},\mathscr{K}(\ell^{p})\otimes A)$$
is $\omega$-surjective;
\item the $L^{p}$ coarse Baum-Connes assembly map for $X$ with coefficients in $A$ is injective.
\end{itemize}
Then for any $N\geq 1$, there exists a universal constant $\lambda_{PA}\geq 1$ such that for any $\varepsilon$ in $(0,\frac{1}{20\lambda_{PA}})$ and any $r>0$, there exist $r'\geq r$ and $N'\geq N$ such that $\mathcal{PA_{*}}(B^{p}(X,A), \varepsilon,\lambda_{PA}\varepsilon,r,r',N,N')$ holds.
\end{theorem}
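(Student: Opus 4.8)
The plan is to mirror the proof of Theorem \ref{theorem 4.1} step for step, replacing each crossed-product ingredient by its coarse counterpart. Concretely, the reduced crossed product $A\rtimes\Gamma$ is replaced by the Roe algebra $B^{p}(X,A)$, the equivariant localization and Roe algebras $B^{p}_{L}(P_{d}(\Gamma),A)^{\Gamma}$, $B^{p}(P_{d}(\Gamma),A)^{\Gamma}$ by $B^{p}_{L}(P_{d}(X),A)$, $B^{p}(P_{d}(X),A)$, the assembly maps by the $L^{p}$ coarse assembly maps $\mu^{d}_{A,*}$, and the Morita/decomposition input of Lemma \ref{lemma 2} and Proposition \ref{prop 3} by the coarse analogs provided by Corollary \ref{cor 5.3} (together with Lemma \ref{lemma 5.2}).

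First I would use the three hypotheses to fix a pair of scales $d\leq d'$ satisfying two conditions, exactly as in Theorem \ref{theorem 4.1}: a controlled surjectivity statement at scale $d$ for coefficients $\ell^{\infty}(\mathbb{N},\mathscr{K}(\ell^{p})\otimes A)$, coming from $\omega$-surjectivity of the $\mathscr{N}$-$L^{p}$ coarse assembly map; and a localization-injectivity statement, namely $\mu^{d}_{A,*}([x])=0\Rightarrow i_{d,d',*}([x])=0$. Here coarse uniform contractibility of $X$ plays the role that a cocompact universal example played in the group case: it guarantees that any compact piece of $P_{d}(X)$ sits inside a contractible compact piece of $P_{d'}(X)$, so that injectivity of the coarse assembly map kills the kernel of $\mu^{d}_{A,*}$ at the controlled scale $d'$. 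I then set $\lambda_{PA}=\rho(9\lambda_{N}\omega(4\lambda_{N}))$, with $\rho$ as in Proposition \ref{prop 3.24} and $(\lambda,h)$ the control pair of Corollary \ref{cor 5.3}.

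The core is a proof by contradiction identical in shape to that of Theorem \ref{theorem 4.1}. Assuming the conclusion fails for some $N\geq 1$, I extract $\varepsilon\in(0,\tfrac{1}{20\lambda_{PA}})$, $r>0$, unbounded increasing sequences $(r_{i}),(N_{i})$, and classes $[x_{i}]\in K^{\varepsilon,r,N}_{*}(B^{p}(X,A))$ with $\iota_{*}([x_{i}])=0$ in $K_{*}(B^{p}(X,A))$ but $\iota^{\lambda_{PA}\varepsilon,r_{i},N_{i}}_{*}([x_{i}])\neq 0$. Using the coarse decomposition isomorphism of Corollary \ref{cor 5.3} (noting that $B^{p}(X,\ell^{\infty}(\mathbb{N},\mathscr{K}(\ell^{p})\otimes A))$ splits as the corresponding product up to Morita equivalence), I assemble the $[x_{i}]$ into one class $[x]\in K^{\lambda_{N}\varepsilon,h_{\varepsilon,N}r,\lambda_{N}}_{*}(B^{p}(X,\ell^{\infty}(\mathbb{N},\mathscr{K}(\ell^{p})\otimes A)))$. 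Controlled surjectivity yields a class $[z]$ in the corresponding localization-algebra $K$-theory whose quantitative coarse assembly image is $\iota^{\lambda_{N}}_{*}([x])$; compatibility of the quantitative assembly maps with the $\omega(4\lambda_{N})$-$L^{p}$ assembly maps together with Proposition \ref{prop 3.24} then produces $R\geq h_{\varepsilon,N}r$ with $\iota^{\lambda_{PA}\varepsilon,R,33N_{1}}_{*}([x])$ equal to the quantitative coarse assembly image of $[z]$, where $N_{1}=\max\{\omega(4\lambda_{N})\lambda_{N},\,9\omega(4\lambda_{N})\}$. Decomposing $[z]$ componentwise and applying the localization-injectivity condition to each factor gives $i_{d,d',*}([z])=0$; taking $N'=\max\{33N_{1},\Vert P\Vert\}$ for the witnessing homotopy $P$, I conclude $\iota^{\lambda_{PA}\varepsilon,R,N'}_{*}([x])=0$, hence $\iota^{\lambda_{PA}\varepsilon,R,N'}_{*}([x_{i}])=0$ for all $i$ by naturality. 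Choosing $i$ with $r_{i}\geq R$ and $N_{i}\geq N'$ contradicts the choice of $[x_{i}]$, and the odd case is handled identically.

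The step I expect to be the main obstacle is supplying the localization-algebra decomposition needed to split $[z]$ into its factors $([z_{j}])$: the lemmas of this section (and Corollary \ref{cor 5.3}) are phrased for the Roe algebras $B^{p}(P_{d}(X),A_{i})$, whereas the decomposition of $[z]$ must take place for the localization algebras $B^{p}_{L}(P_{d}(X),A_{i})$, the coarse analog of Proposition \ref{prop 3}. I expect this to follow by the same skeleton-filtration and Mayer--Vietoris induction used there, invoking that $X$ has bounded geometry. The remaining work is bookkeeping: confirming that coarse uniform contractibility delivers the fixed scales $d\leq d'$ uniformly (rather than per element) and tracking the control data $(\varepsilon,r,N)\mapsto(\lambda_{PA}\varepsilon,R,N')$ consistently through Proposition \ref{prop 3.24} and the Morita equivalences, which is routine given the template of \cite{PAP-Oyono}.
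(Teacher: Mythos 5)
Your proposal follows the paper's own proof of Theorem \ref{th 5.6} essentially step for step: the same constant $\lambda_{PA}=\rho(9\lambda_{N}\omega(4\lambda_{N}))$, the same contradiction setup, assembly of the $[x_{i}]$ into $[x]$ via Corollary \ref{cor 5.3}, the application of $\omega$-surjectivity, compatibility, and Proposition \ref{prop 3.24} to produce $R$ and $N_{1}$, the componentwise decomposition of $[z]$, and the use of coarse uniform contractibility together with injectivity of $\mu_{A,*}$ to obtain $i_{d,d',*}([z])=0$ at a uniform scale $d'$, ending with $N'=\max\{33N_{1},\Vert P\Vert\}$. The two issues you flag are real but are shared by (indeed, glossed over in) the paper itself: it invokes Proposition \ref{prop 3} for the localization algebras over $P_{d}(X)$ although that proposition is stated in the equivariant setting, and the only cosmetic difference is that the paper obtains $d$ by applying surjectivity to the single assembled class $[x]$ and produces $d'$ only afterwards, rather than fixing $d\leq d'$ at the outset.
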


\begin{proof}
Let $\rho$ be as in Proposition \ref{prop 3.24}, pick $(\lambda,h)$ as in Corollary \ref{cor 5.3} and put $\lambda_{PA}=\rho(9\lambda_{N}\omega(4\lambda_{N}))$. Assume that there exists $N\geq 1$ such that this statement does not hold. Then there exist\begin{itemize}
\item $\varepsilon\in (0,\frac{1}{20\lambda_{PA}})$ and $r>0$,

\item an unbounded increasing sequence $(r_{i})_{i\in\mathbb{N}}$ bounded below by $r$,

\item an unbounded increasing sequence $(N_{i})_{i\in\mathbb{N}}$ bounded below by $N$,

\item a sequence of elements $([x_{i}])_{i\in\mathbb{N}}$ with $[x_{i}]\in K^{\varepsilon,r,N}_{*}(B^{p}(X,A))$,
\end{itemize}
such that, for each $i\in\mathbb{N}$,
$$\iota_{*}([x_{i}])=0\text{ in }K_{*}(B^{p}(X,A))$$
 and 
$$\iota^{\lambda_{PA}\varepsilon,r_{i},N_{i}}_{*}([x_{i}])\neq 0\text{ in }K^{\lambda_{PA}\varepsilon,r_{i},N_{i}}_{*}(B^{p}(X,A)).$$
Let $[x]$ be an element of $K^{\lambda_{N}\varepsilon,h_{\varepsilon,N}r,\lambda_{N}}_{*}\Big(B^{p}\big(X,\ell^{\infty}(\mathbb{N},\mathscr{K}(\ell^{p})\otimes A)\big)\Big)$ corresponding to $([x_{i}])_{i\in\mathbb{N}}$ in $\prod\limits_{i\in\mathbb{N}}K^{\varepsilon,r,N}_{*}(B^{p}(X,A))$ under the $(\lambda,h)$-isomorphism of Corollary \ref{cor 5.3}. Observe that $\iota^{\lambda_{N}}_{*}([x])$ is the element of $K^{4\lambda_{N}}_{*}\Big(B^{p}\big(X,\ell^{\infty}(\mathbb{N},\mathscr{K}(\ell^{p})\otimes A)\big)\Big)$. Assume that $$[z]\in K^{\omega(4\lambda_{N})}_{*}\Big(B^{p}_{L}\big(P_{d}(X),\ell^{\infty}(\mathbb{N},\mathscr{K}(\ell^{p})\otimes A)\big)\Big)$$ such that 
$$\mu^{\omega(4\lambda_{N}),d}_{\ell^{\infty}(\mathbb{N},\mathscr{K}(\ell^{p})\otimes A),*}([z])=\iota^{\lambda_{N}}_{*}([x])\text{ in }K^{\omega(4\lambda_{N})\cdot 4\lambda_{N}}_{*}\Big(B^{p}\big(X,\ell^{\infty}(\mathbb{N},\mathscr{K}(\ell^{p})\otimes A)\big)\Big).$$
Since the quantitative $L^{p}$ coarse assembly maps are compatible with the $\omega(4\lambda_{N})$-$L^{p}$ coarse assembly maps, we obtain that
$$\mu^{4N_{1},d}_{\ell^{\infty}(\mathbb{N}, \mathscr{K}(\ell^{p})\otimes A),*}([z]_{4N_{1}})=\iota^{N_{1}}_{*}\circ\mu^{\lambda_{N}\varepsilon,h_{\varepsilon,N}r,\omega(4\lambda_{N})}_{\ell^{\infty}(\mathbb{N}, \mathscr{K}(\ell^{p})\otimes A),*}([z]_{\omega(4\lambda_{N})}),$$
where $N_{1}=\max\{\omega(4\lambda_{N})\cdot\lambda_{N}, 9\omega(4\lambda_{N})\}$. However,
according to Proposition \ref{prop 3.24}, there exists $R\geq h_{\varepsilon,N}r$ such that
$$\iota^{\lambda_{PA}\varepsilon,R,33N_{1}}_{*}([x])=\mu^{\lambda_{PA}\varepsilon, R, 33N_{1},d}_{\ell^{\infty}(\mathbb{N}, \mathscr{K}(\ell^{p})\otimes A),*}([z]_{33N_{1}}).$$
By Proposition \ref{prop 3}, we have an isomorphism
$$K_{*}\Big(B^{p}_{L}\big(P_{d}(X),\ell^{\infty}(\mathbb{N},\mathscr{K}(\ell^{p})\otimes A)\big)\Big)\cong\prod\limits_{i\in\mathbb{N}}K_{*}\big(B^{p}_{L}(P_{d}(X),A)\big).$$
Let $([z_{i}])_{i\in\mathbb{N}}$ be the element of $\prod\limits_{i\in\mathbb{N}}K_{*}(B^{p}_{L}(P_{d}(X),A))$ corresponding to $[z]$ under this identification. 
Using the compatibility of the quantitative $L^{p}$ assembly maps with the usual ones, we obtain by naturality that $\mu^{d}_{A,*}([z_{i}])=0$, for each $i\in\mathbb{N}$. Since $X$ is coarsely uniformly contractible and $\mu_{A,*}$ is injective, we deduce that there exists $d'\geq d$ such that
 $$i_{d,d',*}([z])=0\text{ in }K_{*}\Big(B^{p}_{L}\big(P_{d'}(X),\ell^{\infty}(\mathbb{N},\mathscr{K}(\ell^{p})\otimes A)\big)\Big).$$
Let $(p_{t})_{t\in [0,1]}$ be a homotopy of idempotents (resp. invertibles) in $M_{n}(\widetilde{B})$ between $i_{d,d',*}([z])$ and $0$, then $P:=(p_{t})$ is an idempotent (resp. invertible) element in $C([0,1], M_{n}(\widetilde{B}))$, where $B=B^{p}_{L}\big(P_{d'}(X),\ell^{\infty}(\mathbb{N}, \mathscr{K}(\ell^{p})\otimes A)\big)$. Put $N'=\max\{33N_{1},\Vert P\Vert\}$. Since
 $$\mu^{\lambda_{PA}\varepsilon,R,N',d}_{\ell^{\infty}(\mathbb{N},\mathscr{K}(\ell^{p})\otimes A),*}([z])=\mu^{\lambda_{PA}\varepsilon,R,N',d'}_{\ell^{\infty}(\mathbb{N},\mathscr{K}(\ell^{p})\otimes A),*}\circ i_{d,d',*}([z]),$$
then
$$\iota^{\lambda_{PA}\varepsilon,R,N'}_{*}([x])=0\text{ in }K^{\lambda_{PA}\varepsilon,R,N'}_{*}\Big(B^{p}\big(X,\ell^{\infty}(\mathbb{N},\mathscr{K}(\ell^{p})\otimes A)\big)\Big).$$
By naturality, we see that $\iota^{\lambda_{PA}\varepsilon,R,N'}_{*}([x_{i}])=0$ in $K^{\lambda_{PA}\varepsilon,R,N'}_{*}\big(B^{p}_{L}(X,A)\big)$ for all integers $i$. Picking an integer $i$ such that $r_{i}\geq R$ and $N_{i}\geq N'$, we have
$$\iota^{\lambda_{PA}\varepsilon,r_{i},N_{i}}_{*}([x_{i}])=0,$$
which contradicts our assumption.
\end{proof}

\begin{theorem}\cite{ZZ21}\label{th 5.7}
For any $p\in[1,\infty)$, the $L^p$ coarse Baum–Connes conjecture holds for
proper metric spaces with finite asymptotic dimension.
\end{theorem}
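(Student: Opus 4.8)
The plan is to establish that the $L^{p}$ coarse Baum--Connes assembly map
$$\mu_{A,*}: \lim\limits_{d>0} K_{*}\big(B^{p}_{L}(P_{d}(X),A)\big)\rightarrow K_{*}\big(B^{p}(X,A)\big)$$
is an isomorphism by first descending to the level of quantitative $K$-theory and then running an induction on the asymptotic dimension via a controlled Mayer--Vietoris principle, in the spirit of Yu's original argument \cite{1998-Yu} and its quantitative refinement by Oyono-Oyono and Yu \cite{Oyono-2015}. The first reduction is to recall, via Proposition \ref{prop 2.12}, that the ordinary $K$-groups are recovered as the limit of the quantitative groups $K^{\varepsilon,r,N}_{*}$; it therefore suffices to show that the family of quantitative coarse assembly maps is a controlled isomorphism for a single control pair $(\lambda,h)$ independent of the scale. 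Surjectivity is essentially already in hand: Remark \ref{rmk 5.8} records that finite asymptotic dimension forces the $N$-$L^{p}$ coarse Baum--Connes assembly map to be $\omega$-surjective, with $\omega$ depending only on the asymptotic dimension $m$, a strong Lipschitz constant, and a Mayer--Vietoris control pair. The real content is thus controlled \emph{injectivity}.

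The heart of the argument is an induction on the asymptotic dimension $m$. Finite asymptotic dimension supplies, for every scale, a covering of $X$ by $m+1$ families of uniformly bounded, pairwise far-apart components. In the base case $m=0$, the Rips complex decomposes at large scale into a disjoint union of uniformly bounded pieces, and the assembly map becomes an isomorphism componentwise, since on each bounded piece the localization and evaluation constructions agree up to controlled homotopy; the resulting product structure is handled by Corollary \ref{cor 5.3} and Lemma \ref{lemma1}. For the inductive step, I would set up a quantitative Mayer--Vietoris six-term sequence, simultaneously for the localization algebras $B^{p}_{L}(P_{d}(X),A)$ and the Roe algebras $B^{p}(P_{d}(X),A)$, associated to a splitting of $X$ into an $(m-1)$-dimensional part and a $0$-dimensional part; the assembly map induces a morphism of these two sequences. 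The inductive hypothesis controls the $(m-1)$-dimensional and $0$-dimensional constituents, and a controlled five lemma then upgrades this to the controlled isomorphism at level $m$.

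The main obstacle will be establishing the quantitative Mayer--Vietoris sequence in the genuinely $L^{p}$ setting and keeping the control pair $(\lambda,h)$ uniform through the induction. Without a $C^{*}$-structure one cannot pass to projections and unitaries, so every excision and cutting-and-pasting step must be executed with $(\varepsilon,r,N)$-idempotents and $(\varepsilon,r,N)$-invertibles, tracking the growth of all three parameters --- precisely the bookkeeping reflected in Proposition \ref{prop 3.24} and the Lipschitz-homotopy estimates of Proposition \ref{prop 2.19}. Once the controlled isomorphism is in place with a uniform control pair, passing to the limit over $r$ and $N$, and over the Rips parameter $d$, through the relaxation-of-control maps promotes it to the honest isomorphism of $\mu_{A,*}$, which completes the proof.
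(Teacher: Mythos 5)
This theorem is not proved in the paper at all: it is imported verbatim from \cite{ZZ21} (the paper's Remark \ref{rmk 5.8} identifies it with Theorem 4.6 there), so the only argument your proposal can be measured against is the one in that reference. Your overall architecture --- reduce to quantitative $K$-theory via Proposition \ref{prop 2.12}, then induct on asymptotic dimension through a controlled Mayer--Vietoris sequence with a control pair kept uniform through the induction, in the style of \cite{1998-Yu} and \cite{Oyono-2015} --- is indeed the strategy of \cite{ZZ21}, whose key inputs are strong Lipschitz homotopy invariance for $L^p$ localization algebras and a quantitative cutting-and-pasting argument for $L^p$ Roe algebras; your identification of the $L^p$-specific difficulty (no passage to projections/unitaries, so all excision must be done with $(\varepsilon,r,N)$-idempotents and $(\varepsilon,r,N)$-invertibles) is also the right one.

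However, there is a genuine circularity in how you dispose of surjectivity. You declare it ``essentially already in hand'' by invoking Remark \ref{rmk 5.8}, but that remark is explicitly extracted \emph{from the proof of Theorem 4.6 in \cite{ZZ21}}, i.e., from the proof of the very statement you are trying to establish; in a blind proof it cannot be used as an input. The $\omega$-surjectivity of the $N$-$L^p$ coarse assembly map is an \emph{output} of the Mayer--Vietoris induction, obtained simultaneously with controlled injectivity, not a prerequisite for it. This is not merely a bookkeeping issue: your plan to carry only injectivity through the induction breaks at the controlled five lemma step, since concluding injectivity (resp.\ surjectivity) of the middle vertical map in the Mayer--Vietoris ladder requires surjectivity (resp.\ injectivity) of neighboring vertical maps. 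So the induction hypothesis must be the full two-sided statement --- a controlled isomorphism with a uniform control pair on the lower-dimensional and uniformly bounded constituents --- and both halves must be propagated together, which is exactly how \cite{ZZ21} proceeds. With that repair your outline matches the actual proof; as written, the division of labor ``surjectivity is free, injectivity is the content'' is unsound.
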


Since hyperbolic metric spaces have finite asymptotic dimensions, and combining this with Remark \ref{rmk 5.8}, Theorem \ref{th 5.6} and Theorem \ref{th 5.7}, we have the following result.
\begin{corollary}
For any $N\geq 1$, there exists a universal constant $\lambda_{PA}\geq 1$ such that for any discrete Gromov hyperbolic metric space $X$, the following holds: for any $\varepsilon$ in $(0,\frac{1}{20\lambda_{PA}})$ and any $r>0$, there exist $r'\geq r$ and $N'\geq N$ such that $\mathcal{PA}_{*}(B^{p}(X,A),\varepsilon,\lambda_{PA}\varepsilon,r,r',N,N')$ holds for any $L^{p}$ operator algebra $A$.

\end{corollary}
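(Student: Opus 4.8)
The plan is to deduce the corollary directly from Theorem \ref{th 5.6} by verifying its three hypotheses for an arbitrary discrete Gromov hyperbolic metric space $X$, which throughout this section carries the standing assumption of bounded geometry; in particular $X$ is then proper, its closed balls being finite. The first hypothesis, that $X$ be coarsely uniformly contractible, is exactly the content of the example recorded above, namely that every discrete Gromov hyperbolic metric space is coarsely uniformly contractible, so nothing further is required there.

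For the remaining two hypotheses I would invoke the classical fact that a Gromov hyperbolic metric space has finite asymptotic dimension. Granting this, Remark \ref{rmk 5.8} supplies, for each positive integer $\mathscr{N}$, a non-decreasing function $\omega\colon[1,\infty)\to[1,\infty)$ for which the $\mathscr{N}$-$L^{p}$ coarse Baum--Connes assembly map for $X$ is $\omega$-surjective; applying this with coefficients in $\ell^{\infty}(\mathbb{N},\mathscr{K}(\ell^{p})\otimes A)$ gives the second hypothesis. The third hypothesis, injectivity of the $L^{p}$ coarse Baum--Connes assembly map with coefficients in $A$, follows from Theorem \ref{th 5.7}: finite asymptotic dimension forces the $L^{p}$ coarse Baum--Connes conjecture to hold, so $\mu_{A,*}$ is an isomorphism and in particular injective. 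With all three hypotheses in place, Theorem \ref{th 5.6} produces $\lambda_{PA}$, $r'$ and $N'$ and the statement $\mathcal{PA}_{*}(B^{p}(X,A),\varepsilon,\lambda_{PA}\varepsilon,r,r',N,N')$, the coefficient algebra $A$ being arbitrary throughout.

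The step I expect to require the most care is the assertion that $\lambda_{PA}$ is \emph{universal}, that is, that it may be chosen to depend only on $N$ and not on the particular hyperbolic space $X$ nor on $A$. Tracing the construction in the proof of Theorem \ref{th 5.6}, one has $\lambda_{PA}=\rho(9\lambda_{N}\omega(4\lambda_{N}))$, where the polynomial $\rho$ of Proposition \ref{prop 3.24} and the control pair $(\lambda,h)$ of Corollary \ref{cor 5.3} are genuinely universal; the only a priori dependence on $X$ enters through the control function $\omega$ of Remark \ref{rmk 5.8}, which is governed by the asymptotic dimension $m$, the strong Lipschitz constant $C$, and the Mayer--Vietoris control pair. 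The delicate point is therefore to control these data, and hence the single value $\omega(4\lambda_{N})$, in a manner independent of $X$ across the class of hyperbolic spaces considered, so that the resulting $\lambda_{PA}$ depends on $N$ alone. Once this uniformity is secured, combining finite asymptotic dimension of hyperbolic spaces with Remark \ref{rmk 5.8}, Theorem \ref{th 5.6} and Theorem \ref{th 5.7} completes the argument.
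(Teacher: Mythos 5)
Your proposal is, in substance, exactly the paper's argument: the paper's entire proof of this corollary is the single sentence preceding it, which combines the example that discrete Gromov hyperbolic spaces are coarsely uniformly contractible, the finite asymptotic dimension of hyperbolic spaces, Remark \ref{rmk 5.8}, Theorem \ref{th 5.7} (giving injectivity), and Theorem \ref{th 5.6} -- precisely your first two paragraphs. The ``delicate point'' you flag is genuine and worth recording: since $\lambda_{PA}=\rho(9\lambda_{N}\omega(4\lambda_{N}))$ and, by Remark \ref{rmk 5.8}, $\omega$ depends on the asymptotic dimension and strong Lipschitz constant of $X$ (quantities that are \emph{not} uniformly bounded over the class of all discrete Gromov hyperbolic spaces), the claimed universality of $\lambda_{PA}$ does not follow from the cited results alone; the paper leaves this uniformity equally unaddressed, so your proposal matches the paper's proof both in its route and in its one unresolved step.
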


\section*{Acknowledgments}
HW is supported by the grant NSFC 12271165 and in part by Science and Technology Commission of Shanghai Municipality (No. 22DZ2229014), and JZ is supported by the grants NSFC 12171156, 12271165. We thank Eusebio Gardella for suggesting us many useful references on $L^{p}$ operator algebras.


\end{document}